\documentclass[a4paper,12pt]{article}
\usepackage{amsfonts}
\usepackage{amsmath,amssymb,amsthm,amsopn,exscale, wasysym}
\usepackage[T1]{fontenc}

\oddsidemargin 0.00in
\topmargin 0.00in
\textwidth 159mm
\headheight 0.2in
\headsep 0in
\textheight 9.5in

\DeclareMathOperator{\im}{im}

\newtheorem{thm}{Theorem}
\newtheorem{lem}[thm]{Lemma}

\newtheorem{exams}[thm]{Examples}
\newtheorem{exam}[thm]{Example}
\newtheorem{ew}[thm]{Warning-Example}
\newtheorem{fe}[thm]{Fundamental Example}
\newtheorem{f}[thm]{Fact}
\newtheorem{fs}[thm]{Facts}
\newtheorem{rem}[thm]{Remark}
\newtheorem{toprem}[thm]{Topological Remark}
\newtheorem{cor}[thm]{Corollary}

\newcommand{\To}{\stackrel{\circ}{\mathcal{T}}}
\newcommand{\N}{\ensuremath{\mathbb{N}}}
\newcommand{\R}{\ensuremath{\mathbb{R}}}
\newcommand{\mb}{\mathbb}
\newcommand{\mc}{\mathcal}
\newcommand{\spa}{\mathbf{Space}}
\newcommand{\ads}{\mathbf{ADS}}
\newcommand{\ds}{\mathbf{DS}}
\newcommand{\lds}{\mathbf{LDS}}
\newcommand{\rplds}{\mathbf{RPLDS}}
\newcommand{\rplss}{\mathbf{RPLSS}}
\newcommand{\wds}{\mathbf{WDS}}
\newcommand{\gts}{\mathbf{GTS}}

\begin{document}
\begin{center}

\Large
\textbf{O-minimal homotopy  and generalized (co)homology}

\vspace{3mm}
\large \textit{} \textsc{Artur Pi\k{e}kosz}
\end{center}

\begin{abstract}
This article explains and extends  semialgebraic homotopy theory (developed by H. Delfs and M. Knebusch) to o-minimal homotopy theory (over a field). The homotopy category of definable CW-complexes is equivalent to 
the homotopy category of topological CW-complexes (with continuous mappings). If the theory of the o-minimal expansion of a field is bounded, then these categories are equivalent to the homotopy category of weakly definable spaces. Similar facts hold for decreasing systems of spaces.
As a result, generalized homology and cohomology theories on pointed weak polytopes uniquely  correspond (up to an isomorphism) to the known topological generalized homology and cohomology theories on pointed CW-complexes. 
\end{abstract}
{\small 2000 \textit{Mathematics Subject Classification:} 	03C64, 55N20, 55Q05.\\
\textit{Key words and phrases:} o-minimal structure, generalized topology, locally definable space, weakly definable space, CW-complex, homotopy sets, generalized homology, generalized cohomology. 
}

\section{Introduction}
In the 1980's, H. Delfs, M. Knebusch and others developed ``semialgebraic topology'' in locally semialgebraic and weakly semialgebraic spaces (see \cite{DK2,DK5,DK6,LSS,WSS}).
In the survey paper \cite{K91}, M. Knebusch suggested that this theory may be generalized to the o-minimal context. This programme was partially undertaken first by A. Woerheide, who constructed the o-minimal singular homology theory in \cite{W}, and later by M. Edmundo, who developed and applied the singular homology and cohomology theories over o-minimal structures (see for example \cite{E}). For homotopy theory, A. Berarducci and M. Otero worked with the o-minimal fundamental group and transfer methods in o-minimal  geometry (\cite{BO,BO2}). During the period this paper was written, several authors wrote about different types of homology and cohomology (see \cite{EJP,EP}, for example). 

Still the semialgebraic homotopy theory contained in \cite{LSS} and \cite{WSS} was not extended to the case of spaces over o-minimal expansions of fields.
For the question why, the author may only guess that people in the field wanted to avoid generalized topology.
(Notice the failure of E.Baro and M. Otero \cite{ldh} to give precise definitions and to present the theory clearly, see below.)

The aim of extending a whole theory, not a single theorem or even tens or hundreds of facts, may be sometimes achieved by carefull choice of the definitions and explaining the differences that  appear.
This can be done in the case of the semialgebraic homotopy theory of H. Delfs and M. Knebusch.

First, the spaces of our interest (with their morphisms) over each 
of the considered structures form several categories that are best described as full subcategories of some ambient category. The choice of a good ambient category is very important. In \cite{LSS} the tusk was done using sheaf theory, but M. Knebusch in \cite{WSS} has already simplified the definitions by using what is called ``function sheaves'' (involving a simple set-theoretic definition). Notice that the usual sheaf theory is not necessary to understand locally semialgebraic spaces.
 Thus the extension of the theory should be done through extension of the basic definitions from \cite{WSS}. Another argument for this is the fact that locally definable spaces do not suffice, we need to speak about weakly definable spaces to get a satisfactory homotopy theory.

Second,  some proofs of \cite{LSS} need modification. The mapping spaces from III.3 are specific for the semialgebraic case. This is modified in the present paper. Moreover,  Lemma II.4.3 from \cite{LSS} (and related facts) need to be modified since one needs to add the third Comparison Theorem (the o-minimal \textit{expansion case}). This was done in \cite{BaOt} by the use of ``normal triangulations'' from \cite{Ba} (the problem appears on the definable sets level).

And third, we need to distinguish between theories that are \textit{bounded} (definition in the present paper) and  other that are not bounded. The theory RCF itself is bounded, and some proofs of \cite{WSS} (related to IV.9-10) do not work in the general setting of an o-minimal expansion of a (real closed) field. The question arises if the corresponding facts  are true.

After considering these remarks, one can see that the two volumes \cite{LSS} and \cite{WSS} are a source of thousands of facts and their proofs about locally definable spaces and weakly definable spaces. It is usually done just by changing the word ``semialgebraic'' into the word ``definable''.
The intention of the author of the present paper is not to re-write about 600 pages with this simple change, but to give enough understanding of the theory to the reader. Some examples and facts from \cite{LSS} and \cite{WSS} are restated to make this understanding easy. 
(The above remarks apply to so-called ``geometric'' theory. The so-called ``abstract'' theory, contained in Appendix A of \cite{LSS}, is not considered in the present paper.)

It is convenient to understand that the semialgebraic homotopy theory of H. Delfs and M. Knebusch is basically the usual homotopy theory re-done in the presence of the generalized topology. The constructions of homotopy theory may be carried out in the semialgebraic context. Thus it is not surprising that these constructions may be also done in the context of o-minimal expansions of fields.
The use of the generalized topology may be extended far beyond the above context (see \cite{ap2} for details).  

The author considers the main result of this paper to be the following:
the semialgebraic homotopy theory of H. Delfs and M. Knebusch is now explained and extended to the o-minimal homotopy theory (over a field).
The extension part  includes the Comparison Theorems (especially Theorems \ref{3compld} and \ref{3compwd}), a definable version of the Whitehead theorem (Theorem \ref{cwwhitehead}) and equivalence of the homotopy categories (Corollaries \ref{concld},\ref{equi},\ref{concwd},\ref{conc}).
Majority of the examples and Theorems \ref{realline} and \ref{nonreal} contribute to the explanation part.
Of independent interest are: a characterization of real analytic manifolds as locally definable manifolds (Theorem \ref{manifold}) and the definable version of the Bertini-Lefschetz Theorem (Theorem \ref{bertini}).  
 
As a result of the homotopy approach, deeper than the homology and cohomology one, we get the generalized homology and cohomology theories (including the standard singular theories) for so called pointed weak polytopes, and these theories appear, if $T$ is bounded, to be ``the same'' as their topological counterparts.  

The categories of locally and weakly definable spaces over o-minimal expansions of real closed fields, introduced here,  with their subspaces (locally definable subsets and weakly definable subsets) are far generalizations of \textit{analytic-geometric categories} of van den Dries and Miller (\cite{DM}). In particular paracompact locally definable manifolds are generalizations of both definable manifolds  over o-minimal expansions of fields and real analytic manifolds.

 For basic properties of o-minimal structures, see the book \cite{Dries} and the survey paper \cite{DM}.
\textbf{Assume that $R$ is an o-minimal expansion of  a real closed field.}

\section{Spaces over o-minimal structures}

As o-minimal structures have natural topology, it is quite
natural that algebraic topology for such structures
should be developed. 
(This paper deals only with the case of o-minimal expansions of fields.) 
Unfortunately, 
there are obstacles to the above when one is doing traditional topology:
 if $R$ is not (an expansion of) the (ordered) field of real numbers $\mathbb{R}$,
then $R$ is not locally compact and is totally disconnected.
Moreover, even for $\mb{R}$, not every family of open definable sets has a definable union, and continuous definable functions do not form a sheaf.

A good idea to overcome that in the case of o-minimal pure (ordered) fields
was given by H. Delfs and M. Knebusch in \cite{LSS}: it is the concept of a generalized topological space. This idea serves well also in our setting.

\vspace{5mm}
A \textbf{generalized topological space} is a set $M$ together with a 
family of subsets $\stackrel{\circ}{\mathcal{T}} (M)$ of $M$,
called \textbf{open sets}, and a family of open families $\mathrm{Cov}_M $,
 called
 \textbf{admissible (open coverings)},
such that:
\begin{description}
\item[(A1)] $\emptyset , M\in \To (M)$ 
(the empty set and the whole space are open),

\item[(A2)] if $U_1 ,U_2 \in \To (M)$ then $U_1 \cup U_2 ,U_1
 \cap U_2 \in \To (M)$
(finite unions and finite intersections of open sets are open),

\item[(A3)] if $\{ U_i \}_{i\in I} \subseteq \To (M)$ and $I$ is finite, then
$\{ U_i \}_{i\in I} \in \mathrm{Cov}_M $
(finite families of open sets are admissible),

\item[(A4)] if $\{ U_i \}_{i\in I} \in \mathrm{Cov}_M$ then $\bigcup_{i\in I}
U_i \in \To (M)$ 
(the union of an admissible family is open),

\item[(A5)] if $\{ U_i \}_{i\in I} \in \mathrm{Cov}_M$, $V\subseteq
 \bigcup_{i\in I} U_i $, and $V\in \To (M)$, then $\{ V\cap  U_i \}_{i\in I}
 \in
 \mathrm{Cov}_M$
 (the traces of an admissible family on an open subset of the union
 of the family form an admissible family),

\item[(A6)] if $\{ U_i \}_{i\in I} \in \mathrm{Cov}_M $ and for each $i\in I$
 there is $\{ V_{ij} \}_{j\in J_i} \in \mathrm{Cov}_M$ such that
 $\bigcup_{j\in J_i}
V_{ij} = U_i $, then $\{ V_{ij} \}_{\stackrel{i\in I}{ j\in J_i}}
 \in \mathrm{Cov}_M$ 
(members of all admissible coverings of members of an admissible
 family form together an admissible family),

\item[(A7)] if $\{ U_i \}_{i\in I} \subseteq \To (M)$, $\{ V_j \}_{j\in J} \in
 \mathrm{Cov}_M$, $\bigcup_{j\in J} V_j =\bigcup_{i\in I} U_i$,
 and  $\forall j\in J \: \exists i\in I : V_j
 \subseteq U_i$, then $\{ U_i \}_{i\in I} \in \mathrm{Cov}_M$
(a coarsening, with the same union, of an admissible family  is admissible),

\item[(A8)] if $\{ U_i \}_{i\in I} \in \mathrm{Cov}_M $, $V\subseteq
 \bigcup_{i\in I} U_i$ and $V\cap U_i \in \To (M)$ for each $i$, then
 $V\in \To (M) $
(if a subset of the union of an admissible family has open
 traces with members of the family, then the subset is open).
\end{description}

Generalized topological spaces may be identified with certain Grothendieck sites, where the underlying category is a full, closed on finite (in particular: empty) products and coproducts subcategory of the category of subsets $\mc{P}(M)$ of a given set $M$ with inclusions as morphisms, and the Grothendieck topology is subcanonical, contains all finite jointly surjective  families and satisfies some regularity condition. 
(See \cite{sheaves} for the definition of a Grothendieck site.
Considering such an identification we should remember the ambient category $\mc{P}(M)$.)
More precisely: the axioms (A1), (A2) and (A3) contain a stronger version of the identity axiom of the Grothendieck topology. It is natural, since in model theory and in geometry we love finite unions, finite intersections and finite coverings. The axiom (A4) may be called \textit{co-subcanonicality}. Together with subcanonicality, it ensures that admissible coverings are coverings in the traditional sense. (Subcanonicality is imposed by the notation of
\cite{LSS}. The axiom (A4), weaker than (A8), justifies the notation
 $\mathrm{Cov}_M (U)$ of \cite{LSS}). The next are: (A5) the stability axiom of the Grothendieck topology, followed by the transitivity axiom (A6). Finally, (A7) is the saturation property of the Grothendieck topology (usually the Grothendieck topology of a site is required to be saturated), and the last axiom (A8) may be called the \textit{regularity axiom}. Both saturation and regularity have a smoothing character. Saturation may be achieved by modifying any generalized topological space, and  regularity by modifying a locally definable space (see I.1, page 3 and 9 of \cite{LSS}).     
The reader should be warned that (in general) the closure operator does not exist  for the generalized topology.

A \textbf{strictly continuous mapping} between generalized topological
spaces is such a mapping that the preimage of an (open)  admissible covering is admissible, which implies that the preimage of an  open set is open. (So strictly continuous mappings may be seen as morphisms of sites.)
Inductive limits exist in the category $\gts$ of generalized topological spaces and their strictly continuous mappings (see I.2 in \cite{LSS}).

Generalized topological spaces help to introduce further notions of interest that are generalizations of corresponding semialgebraic notions
(we follow here \cite{WSS}).

A \textbf{function sheaf of rings over $R$} on a generalized topological space
$M$ is a sheaf $F$ of rings on $M$ 
(here the sheaf property is assumed only for admissible coverings)
such that for each $U$ open in $M$
the ring $F(U)$ is a subring of the ring of all functions
 from $U$ into $R$, and the restrictions of the sheaf
 are the set-theoretical restrictions of mappings.
A \textbf{function ringed space over $R$} is a pair $(M, O_M )$, where $M$ is a
 generalized topological space and $O_M $ is  a function sheaf of rings over $R$.
 We will say about \textbf{spaces} (over $R$) for short.
An \textbf{open subspace} of a space over $R$ is an open subset
 of its generalized topological space together with the function sheaf of the space
  restricted to this open set.
A \textbf{morphism} $f :(M, O_M )\rightarrow (N, O_N )$ of function ringed spaces over $R$ is a 
strictly continuous mapping $f: M\rightarrow N$ such that for each open subset $V$ of $N$ the  set-theoretical substitution $h \mapsto h\circ f$ gives a morphism of rings $f^{\#}_V: O_N (V) \rightarrow O_M (f^{-1} (V))$. 
(We could  express this by saying that $f^{\#} : O_N \rightarrow  f_{*}O_M$ is the morphism of sheaves of rings on $N$ over $R$ induced by $f$. However, if  we define for function sheaves
$$(f_{*}O_M) (V)=\{ h:V\to R |\: h\circ f\in O_M(f^{-1}(V))\},$$
then each $f^{\#}_V:O_N(V)\to f_{*}O_M (V)$ becomes just an inclusion.)
Inductive limits exist in the category $\spa(R)$ of spaces over any $R$ and their morphisms (cf. I.2 of \cite{LSS} and \cite{ap2}).
Notice that our category of spaces over $R$, being a generalization 
(by passing from the semialgebraic to the general o-minimal case) 
of the category of spaces from \cite{WSS},  does not use the general sheaf theory for generalized topological spaces (as \cite{LSS} does), but only a bit of a simplier ``function sheaf theory''.

The following basic example is a special case of a definable space defined in \cite{Dries}.

\begin{fe}
Each definable subset $D$ of $R^n$ has a natural structure of a function ringed space over $R$.
Its open sets in the sense of the generalized topology are (relatively) open
 definable subsets, admissible coverings are such open coverings that already finitely many open sets cover the union,
and on each open definable subset $O\subseteq D$ we take the ring $\mathcal{DC}_D (O)$
of all continuous definable  $R$-valued functions on $O$.
\emph{Definable sets will be identified with such function ringed spaces.}
 Notice that the topological closure of a definable set is definable,
so the topological closure operator restricted to the class of definable subsets of a definable set  $D$ can  be treated as the closure operator in the generalized topological sense. 
\end{fe}

We start to re-introduce the theory of locally definable spaces by generalizing the definitions from \cite{LSS}.

 An \textbf{affine definable space} over $R$ is a space over $R$
isomorphic to a definable subset of some $R^n$. (Notice that
morphisms of affine definable spaces are given by continuous definable maps between
definable subsets of affine spaces.)
 
The following example, not explicitely studied before, shows that it is important to consider  affine definable spaces as definable sets  ``embedded'' into their ambient affine spaces.

\begin{exam}[``Bad boy'']
Consider the semialgebraic (that is definable in the ordered field structure)  space $S^1_{angle}$  over $\mb{R}$  on the underlying set $S^1$ of $\mb{R}^2$ obtained by taking the generalized topology 
from the usual affine definable circle $S^1\subseteq \mb{R}^2$
and declaring the structure sheaf to contain the continuous semialgebraic functions of the  angle $\theta$ having period $2\pi$. The two semialgebraic spaces are different.
The usual circle $S^1$ is an ``affine model'' of $S^1_{angle}$: there exists an isomorphism of semialgebraic spaces over $\mb{R}$ (whose formula is not semialgebraic, since it involves a trigonometric function) transforming  the ``non-embedded circle'' $S^1_{angle}$ into the ``embedded circle'' $S^1$.
\end{exam}

A \textbf{definable space} over $R$ is a space over $R$ that has
 a finite open covering  by affine definable spaces.
Definable spaces were introduced by van den Dries in \cite{Dries}.
They admit  clear notions of a definable subset and of an open subset.
The definable subsets of a definable space form a Boolean algebra
generated by the open definable subsets; ``definable'' here  means ``constructible from the generalized topology''.
A \textbf{locally definable space} over $R$ is a space over $R$ 
that has an admissible covering  by affine definable open subspaces.
(So definable spaces are examples of locally definable spaces.)
 Each locally definable space is an inductive limit of a directed system of definable spaces in the category of spaces over a given $R$ (cf. \cite[I.2.3]{LSS}). The \textbf{dimension} of a locally definable space is defined as usual (cf. p. 37 of \cite{LSS}), and may be infinite. 
\textbf{Morphisms} of affine definable spaces, definable spaces and locally
 definable spaces over $R$ are their morphisms as spaces over $R$.
 So affine definable spaces, definable spaces and locally definable spaces form full
 subcategories $\ads(R)$, $\ds(R)$, and $\lds(R)$ of the category $\spa(R)$ of spaces (over $R$).

 A \textbf{locally definable subset} of a locally definable space is
 a subset having definable intersections with all open definable
subspaces. Such subsets are also considered as \textbf{subspaces}, the locally definable space of such a set is formed as an inductive limit of definable subspaces of the open definable spaces forming the ambient space (cf. I.3, p. 28 in \cite{LSS}.) 
A locally definable subset of a locally definable space is called 
\textbf{definable} if as a subspace it is a definable space.
(The definable subsets of a definable space are exactly the definable subsets of these spaces as locally definable ones.)

On locally definable spaces we often consider a topology in
 the traditional sense, called the \textbf{strong topology} (cf. p. 31 of \cite{LSS}),
taking the open sets from the generalized topology as the basis of the topology.
Nevertheless, we will usually work in the generalized topology.
This allows, in many cases, to omit the word ``definably'' applied to topological notions (as in ``definably connected'').
On a definable space the generalized topology generates both the strong topology and the definable ( i. e. ``constructible'')  subsets. Similarly, 
the locally definable subsets of a locally definable space are exactly the sets ``locally constructible'' from the generalized topology, where ``locally'' means ``when restricted to an open definable subspace''.
The closure operator of the strong topology restricted to the class of locally definable subsets may be treated as the closure operator of the generalized topology. 

The following new example gives some understanding of the variety of locally definable spaces even in the semialgebraic case. They are obtained by ``partial localization'', which generalizes passing to the ``localization'' $M_{loc}$ of a locally complete locally semialgebraic space $M$ (see I.2.6 in \cite{LSS}). 

\begin{exam}\label{halfloc}
Consider any o-minimal expansion $\mb{R}_{\mc{S}}$ of the field $\mb{R}$.
Take the admissible union (see \cite{ap2}) of real line open intervals $(-\infty ,n)$ over all natural $n$, which implies that this family is assumed to be admissible.
Then this space is definable ``on the left-hand side'', but only locally definable ``on the right-hand side''. The definable subsets are the finite unions of intervals (of any kind) that are bounded from above. The locally definable subsets are locally finite unions of intervals
that have only finitely many connected components  on the negative half-line. The structure sheaf consists of functions that are continuous definable on each of the intervals $(-\infty,n)$.
This space will be called $(\mb{R}_{\mc{S}})_{loc,+}$. Analogously we define the space $(\mb{R}_{\mc{S}})_{loc,-}$ as the admissible union of
the family $(-n,+\infty)$, for $n\in \mb{N}$.
(By taking the admissible union of the family $(-n,n)$ for $n\in \mb{N}$, we would get the usual ``localization'' $(\mb{R}_{\mc{S}})_{loc}$ of the real line 
$\mb{R}_{\mc{S}}$.) 
\end{exam}

As in \cite{LSS}, we have

\begin{exam}[cf. I.2.4 in \cite{LSS}]
 Any ``direct (generalized) topological sum'' of definable spaces (in the category of spaces over a given $R$) is a locally definable space.
\end{exam}

We call a subset $K$ of a generalized topological space $M$ \textbf{small}
 if for each admissible covering $\mc{U}$ of any open $U$, the set $K\cap U$ is covered by finitely many members of $\mc{U}$.
(We say that $\mc{U}$ is \textbf{essentially finite} on $K$ in such a situation.)
Just from the definitions, we get (as in the semialgebraic case):

\begin{fs} Each definable space is small. Each subset of
 a definable space is also small. Every small open subspace of a locally definable space is definable.
Each small set of a locally definable space is contained in a small open set.
In particular ``small open'' means exactly ``definable open'', but ``small'' does not imply ``definable''.
\end{fs}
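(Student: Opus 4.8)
The plan is to derive every assertion from a single feature of the Fundamental Example: inside a definable space every admissible covering of every open subset already contains a finite subcovering, equivalently, the open subsets in the generalized topology of a definable space are precisely its open definable subsets, and these are again definable spaces. I would first dispose of the first two sentences. Let $D$ be a definable space; for an open $U\subseteq D$ and an admissible covering $\mathcal{U}$ of $U$, the set $U$ is an open definable subspace of $D$, hence a definable space, so finitely many members of $\mathcal{U}$ cover $U$, and these cover $K\cap U$ for every $K\subseteq D$. Thus $D$ is small and every subset of $D$ is small.

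For the remaining assertions the ambient space is a locally definable space $M$ with a fixed admissible covering $\{M_\alpha\}_{\alpha}$ by affine definable open subspaces, and the step I would isolate as the heart of the matter is: every definable open subspace $V$ of $M$ is small in $M$. To prove it, let $\mathcal{W}$ be an admissible covering of an open $U\subseteq M$. Then $V\cap U$ is open by (A2) and is contained in $\bigcup\mathcal{W}=U$; being open in the definable space $V$, it is itself a definable space. By (A5) the traces $\{V\cap U\cap W\}_{W\in\mathcal{W}}$ form an admissible covering of $V\cap U$, which therefore has a finite subcovering, so finitely many members of $\mathcal{W}$ already cover $V\cap U$, and $V$ is small.

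Granting this, the third sentence follows: if $V$ is a small open subspace of $M$, applying smallness to $U=M$ and the covering $\{M_\alpha\}$ yields $V\subseteq M_{\alpha_1}\cup\cdots\cup M_{\alpha_k}$ for finitely many indices, so $V=\bigcup_j (V\cap M_{\alpha_j})$ is a finite covering of $V$ by affine definable open subspaces, whence $V$ is definable. The fourth sentence is parallel: for a small $K$, smallness applied to $\{M_\alpha\}$ gives $K\subseteq V:=M_{\alpha_1}\cup\cdots\cup M_{\alpha_k}$, and this finite union of affine definable open subspaces is a definable open subspace, hence small by the isolated step, so $K$ lies in a small open set. The equivalence ``small open $=$ definable open'' then combines the third sentence (small open $\Rightarrow$ definable) with the isolated step (definable open $\Rightarrow$ small). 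That smallness does not force definability is witnessed by the standard copy of $\N$ inside the affine definable space $R$: it is small by the second sentence, yet it is not a definable subset, since by o-minimality every definable subset of $R$ is a finite union of points and intervals.

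I expect the one genuinely load-bearing step to be the claim that a definable open subspace is small in $M$. Its two delicate ingredients are that $V\cap U$ is really a definable space --- which rests on the coincidence of ``open in the generalized topology'' with ``open definable'' inside a definable space --- and that the traces $\{V\cap U\cap W\}$ form an \emph{admissible} covering rather than merely an open one, which is exactly the content of axiom (A5). Once these are in place the only tool used is the defining property of admissible coverings in the definable case, so no further estimate is needed.
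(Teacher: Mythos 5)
Your proof is correct, and it is exactly the routine verification the paper intends: the paper offers no written argument beyond ``just from the definitions, we get (as in the semialgebraic case)'', and your isolated step --- that a definable open subspace is small because admissible coverings of (open subsets of) a definable space are essentially finite, fed through axioms (A2) and (A5) --- together with the application of smallness to the admissible covering $\{M_\alpha\}$ is precisely how the corresponding facts are established in the semialgebraic setting of Delfs--Knebusch that the paper invokes. Your counterexample $\mathbb{N}\subseteq R$ for ``small does not imply definable'' is also the standard one and works by o-minimality exactly as you say.
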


On can easily check that:
any locally definable space is topologically Hausdorff iff it is \textbf{Hausdorff} in the generalized topological sense.
Similarly, a locally definable space is topologically regular iff it is \textbf{regular} in the generalized topological sense: any single point, always closed, and any closed subspace not containing the point can be separated by disjoint open subspaces.

Clearly, each affine definable space is regular.
Of great importance for the theory of definable spaces is the following

\begin{thm}[Robson\cite{R}, van den Dries\cite{Dries}]
 Each regular definable space is affine.
\end{thm}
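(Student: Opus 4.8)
The plan is to produce a definable embedding $\Phi\colon M\to R^N$ and then identify $M$ with the definable set $\Phi(M)$, mimicking the classical partition-of-unity proof that a normal space with a finite good cover embeds in Euclidean space. Write $M=U_1\cup\dots\cup U_k$ with affine charts $\varphi_i\colon U_i\xrightarrow{\ \sim\ }M_i\subseteq R^{n_i}$; composing each $\varphi_i$ with a definable homeomorphism $R^{n_i}\to(-1,1)^{n_i}$ (coordinatewise $t\mapsto t/(1+|t|)$) we may assume every $M_i$ is bounded, a convenient normalization. The two external inputs I would isolate first are a \emph{definable shrinking lemma} and a \emph{definable Urysohn lemma}: from regularity of $M$ I would deduce that $M$ is definably normal, and hence obtain (i) open definable covers with $M=\bigcup_i V_i$, $\overline{V_i}\subseteq W_i$ and $\overline{W_i}\subseteq U_i$, and (ii) for each $i$ a continuous definable $\lambda_i\colon M\to[0,1]\subseteq R$ with $\lambda_i\equiv 1$ on $\overline{V_i}$ and $\lambda_i\equiv 0$ on $M\setminus W_i$. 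Here closures are taken in the strong topology, which, as recalled above, restricts to the generalized-topological closure on locally definable subsets.

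With these in hand, define for each $i$ the continuous definable map $\psi_i\colon M\to R^{n_i}$ by $\psi_i=\lambda_i\cdot\varphi_i$ on $U_i$ and $\psi_i=0$ on $M\setminus\overline{W_i}$; the two definitions agree on the overlap (where $\lambda_i=0$), and since $\overline{W_i}\subseteq U_i$ every point of $M\setminus U_i$ has a neighbourhood on which $\lambda_i=0$, so $\psi_i$ is a well-defined global section of $O_M$. Set $\Phi=(\lambda_1,\dots,\lambda_k,\psi_1,\dots,\psi_k)\colon M\to R^{N}$ with $N=k+\sum_i n_i$. Each component lies in $O_M(M)$, so $\Phi$ is a morphism of spaces over $R$. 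For injectivity, suppose $\Phi(x)=\Phi(y)$ and pick $i$ with $x\in V_i$; then $\lambda_i(x)=1$, so reading the $\lambda_i$-coordinate gives $\lambda_i(y)=1$, whence $y\in W_i\subseteq U_i$; reading the $\psi_i$-block gives $\varphi_i(x)=\lambda_i(x)\varphi_i(x)=\lambda_i(y)\varphi_i(y)=\varphi_i(y)$, and injectivity of the chart $\varphi_i$ yields $x=y$.

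It remains to see that $\Phi$ is an isomorphism onto its image. First I would note that $\Phi(M)$ is definable, being the image of the definable space $M$ under a definable map, hence a definable subset of $R^N$, i.e. an affine definable space. For the inverse, set $G_i=\{\lambda_i>0\}$, an open definable subset of $M$ containing $\overline{V_i}$; since the $V_i$ cover $M$, so do the $G_i$. Writing $p_i\colon R^N\to R$ for the coordinate carrying $\lambda_i$, we get $\Phi(G_i)=\Phi(M)\cap p_i^{-1}(\{t\in R:t>0\})$, which is open in $\Phi(M)$. On $G_i$ the chart is recovered from $\Phi$ by the continuous definable formula $\varphi_i=\psi_i/\lambda_i$ (legitimate since $\lambda_i>0$ there), so $(\Phi|_{G_i})^{-1}=\varphi_i^{-1}\circ(\text{block division})$ is a continuous definable map $\Phi(G_i)\to G_i$; hence $\Phi|_{G_i}$ is an isomorphism onto the open subspace $\Phi(G_i)$ of $\Phi(M)$. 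As $\{\Phi(G_i)\}$ is a finite open cover of $\Phi(M)$ and being an isomorphism of spaces is local on finite open covers, $\Phi$ is an isomorphism $M\xrightarrow{\ \sim\ }\Phi(M)$, so $M$ is affine.

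The genuinely hard step, and the only place regularity is used, is upgrading regularity to definable normality and producing the cut-offs $\lambda_i$: everything else is formal once they exist. I would prove this separately, e.g. by separating disjoint closed definable sets chart by chart, using regularity together with the finiteness of the cover and the affine (hence normal) structure of each $U_i$, and then invoking the o-minimal Urysohn/Tietze machinery on definably normal spaces. The delicate points are the definability (not merely continuity) of the separating functions and the check that the extensions $\psi_i$ remain sections of $O_M$ across the boundaries $\partial U_i$; both are controlled by the boundedness normalization and the support condition $\overline{W_i}\subseteq U_i$.
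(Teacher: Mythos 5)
The first thing to note is that the paper itself gives no proof of this theorem: it is imported from Robson \cite{R} and van den Dries \cite{Dries}. So your argument must stand on its own, and as written it does not. The embedding skeleton is fine and is the classical construction: given the shrinkings $V_i$, $W_i$ and the cutoffs $\lambda_i$, the map $\Phi=(\lambda_1,\dots,\lambda_k,\psi_1,\dots,\psi_k)$ is a morphism, your injectivity argument works, $\Phi(G_i)=\Phi(M)\cap p_i^{-1}(\{t>0\})$ gives a finite open definable cover of the image, and block-wise division recovers the inverse. The genuine gap is exactly where you park it, in the two ``external inputs'', and the sketch you offer for them would fail. First, the shrinking lemma (i) is a normality-type statement, not a regularity-type one: to shrink $U_1$ you must separate the closed set $M\setminus(U_2\cup\dots\cup U_k)$ from the closed set $M\setminus U_1$, whereas regularity only separates a \emph{point} from a closed set. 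There is no compactness to reduce to points (closed definable subsets of a chart are in general not definably compact, and a finite chart cover is no substitute), and the classical ``regular plus Lindel\"of implies normal'' argument uses countable unions of shrinking stages that are not available definably. Second, your route to (ii) is circular in two ways: gluing chart-wise separating functions into a global one requires a partition of unity subordinate to $\{U_i\}$, which is precisely what is being constructed; and the ``o-minimal Urysohn/Tietze machinery'' in the literature is proved for definable \emph{subsets of $R^n$} via distance functions, i.e.\ for affine spaces --- indeed, the standard way to see that a regular definable space is definably normal is as a \emph{corollary} of the very theorem you are proving. (Granting (i), your instinct for (ii) is salvageable without any abstract Urysohn theorem: work in a bounded affine model and set, say, $\mu_i(x)=d\bigl(\varphi_i(x),\varphi_i(U_i\setminus W_i)\bigr)$ and $\lambda_i=\mu_i/(\mu_i+\nu_i)$ with $\nu_i$ the distance to $\varphi_i(\overline{V_i})$; the extension by zero is continuous because the support lies in $\overline{W_i}\subseteq U_i$. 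So the entire weight of the theorem rests on (i), which you have deferred with ``I would prove this separately''.)

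For comparison, the cited proof of Robson (reproduced in Chapter 10 of \cite{Dries}) does not pass through a prior normality or Urysohn theory at all. It proceeds by induction on the number of affine charts, manufacturing for each chart a continuous definable function that is positive exactly on that chart out of distance-to-frontier functions in a bounded affine model, and it uses regularity directly, at specific boundary points, to verify that these cutoffs extend continuously by zero across the chart boundary. That bespoke construction is precisely what replaces your appeal to definable normality, and some such argument must appear in your write-up: regularity genuinely has to be \emph{used} (Hausdorff alone does not suffice, as non-affine Hausdorff definable spaces exist), and in your current draft the only place it could act is inside the unproved step (i).
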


\begin{rem} \label{6}
 Even if we define locally definable spaces with the use of structure sheaves, a locally definable space is determined by its generalized topology when we assume silently that the structure of each affine subspace is understood, since it has an admissible covering of regular small open subspaces, which are affine definable spaces. The main purpose of introducing function ringed spaces was to define morphisms. 
\end{rem}


A practical way of defining and denoting a locally definable space is to write it as the \textit{admissible union} ($\stackrel{a}{\bigcup}$) of its admissible covering by open  definable (often affine) subspaces, not just the union set (even if considered with a topology). Such a notation is explored in \cite{ap2}. One can also just specify an admissible covering of the space by known open subspaces.

The author considers an attempt to encode the generalized topology under the notion of ``equivalent atlases'' a little bit risky.  We have the following important example, which is again obtained by the ``localization''
process known from \cite{LSS}.

\begin{exam}
Take an archmedean $R$. Consider three locally definable spaces $X_1,X_2,X_3$ on the same open interval $(0,1)$ given, respectively,  by admissible families of open definable sets $\mc{U}_1=\{  (\frac{1}{n}, 1-\frac{1}{n}): n\geq 3\} $, $\mc{U}_2=\{ (0,1)\} $, $\mc{U}_3=\mc{U}_1 \cup \mc{U}_2$. 
Then $X_1\neq X_2=X_3$.
Such a space $X_1$ is the ``localized'' unit interval $(0,1)_{loc}$.

\end{exam}

We would have two non-equivalent atlases $\mc{U}_1,\mc{U}_2$ that combine to a third atlas $\mc{U}_3$, and
the combined atlas $\mc{U}_3$ would be  equivalent to $\mc{U}_2$,  but not equivalent to $\mc{U}_1$.


\vspace{2mm}
Notice that the recent paper \cite{ldh} by E. Baro and M. Otero can easily mislead the reader. They define a locally definable space as a set with a concrete atlas, call some atlases equivalent (which is not studied later), and in Theorems 3.9 and 3.10 say that a set with only a topology is a locally definable space. The reader gets the impression that they consider only the usual topology, and do not see the essential use of the generalized topology (see the proof of (iii) of their 
Proposition 2.9). Their notion of an ``ld-homeomorphism'' is never defined, and the reader may wrongly guess that an ld-homeomorphism is just a locally definable homeomorphism  (see Remark 2.11).
 Their ``locally finite generalized simplicial complex'' is given a locally definable space structure ``star by star'', so it is not necessary ``embedded'' into the ambient affine space. This may mislead the reader when reading their version of the Triangulation Theorem (Fact 2.10) and some proofs. Their Example 3.1 is highly imprecise, since it depends on the choice of the covering of $M$ by definable subsets $M_i$. The same symbol $M$ denotes both a locally definable space and just a subset of $R^n$ (and this is continued in their Example 3.3).
Formula $Fin(\mb{R})=\mb{R}$ (see p. 492) again suggests to the reader the nonexistence of the generalized topology (never mentioned explicitely).
Its worth noting that if $R$ does not have any saturation (as in the important case of the field of real numbers $\mb{R}$), then the usual topology does not determine the generalized topology.

We will say that an object $N$ of $\lds (R)$ \textbf{comes from} $R^k$ if the underlying topological space of $N$ is equal to the standard topological space of $R^k$  and  for each $x\in R^k$ both $N$ and the affine space $R^k$ induce on an open box $B$ containing $x$ the same definable open subspace.
The following two original theorems show the variety of locally definable spaces ``living'' on the same topological space.

\begin{thm}\label{realline}
For each o-minimal expansion $\mb{R}_{\mc{S}}$ of the field of real numbers $\mb{R}$:

1) there are exactly four different objects of $\lds(\mb{R}_{\mc{S}})$ that come from $\mb{R}_{\mc{S}}^1$, namely: $\mb{R}_{\mc{S}}$, $(\mb{R}_{\mc{S}})_{loc}$,
$(\mb{R}_{\mc{S}})_{loc,+}$, $(\mb{R}_{\mc{S}})_{loc, -}$.

2) there are uncountably many different objects of $\lds(\mb{R}_{\mc{S}})$ that come from $\mb{R}^2_{\mc{S}} $.
\end{thm}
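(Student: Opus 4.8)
The plan is to classify all locally definable space structures on a fixed topological space that locally agree with the affine structure, which I will approach by analyzing how the admissible coverings can differ. The key observation, encoded in the definition of ``comes from $R^k$'', is that any such object $N$ has the same underlying point set as $R^k$ and the same germ of open definable subspaces at each point; therefore the \emph{only} freedom lies in which families of open definable sets are declared admissible (equivalently, by Remark \ref{6}, in the generalized topology itself). So the classification reduces to understanding which collections of open definable boxes can form an admissible covering compatible with the axioms (A1)--(A8).

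For part 1), I would first note that an object coming from $\mb{R}_{\mc{S}}^1$ is determined by specifying, for each bounded definable open set, whether it is ``small'' (definable) in $N$; by the Facts preceding Theorem \ref{realline}, small-open equals definable-open, and the structure is pinned down by which open subsets are definable. The real line $\mb{R}_{\mc{S}}$ is itself small (definable), while in a proper localization some open sets become merely locally definable. I would argue that the obstruction to definability of the whole line can only occur ``at $+\infty$'' or ``at $-\infty$'': a bounded open set is always definable, so non-definability is controlled by the behaviour of admissible coverings near the two ends of $\R$. Each end is independently either ``definable'' (the space is definable on that side) or ``localized'' (only locally definable there), giving the four combinations $\mb{R}_{\mc{S}}$ (both ends definable), $(\mb{R}_{\mc{S}})_{loc}$ (both localized), and the two one-sided cases $(\mb{R}_{\mc{S}})_{loc,\pm}$ described in Example \ref{halfloc}. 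The main work is to show these are the \emph{only} possibilities: that there is no intermediate structure and no way to localize ``in the middle'' of $\R$ while keeping the same germs, which I expect to follow from o-minimality (every definable subset of $R$ is a finite union of points and intervals) together with axioms (A6)--(A8) forcing admissibility to be essentially determined by the two unbounded ends.

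For part 2), I would produce an uncountable family by exploiting the extra degree of freedom available in two dimensions: instead of two ends, the plane $R^2_{\mc{S}}$ has a circle's worth of directions at infinity, and one can localize along different ``rays'' or ``sectors'' heading to infinity. Concretely, for each subset of directions (or each choice among uncountably many nested exhaustions of $R^2$ by definable open sets whose complements escape to infinity in prescribed, pairwise incomparable ways) one obtains a distinct admissible covering, hence a distinct object of $\lds(\mb{R}_{\mc{S}})$ coming from $R^2_{\mc{S}}$. The cardinality jump from four to uncountably many is exactly the jump from the two-point boundary $\{+\infty,-\infty\}$ of $\R$ to the continuum of ways $R^2$ can be exhausted near infinity. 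The technical heart is to exhibit an explicit uncountable parametrized family and verify that distinct parameters give non-equal objects, i.e.\ that some open set is definable in one and only locally definable in the other.

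The hard part, in both items, will be the \emph{uniqueness/exhaustiveness} direction rather than the constructions: showing in part 1) that no structures beyond the listed four can arise requires ruling out exotic admissible coverings, and this is where I expect to lean most heavily on o-minimality and on the interplay of axioms (A5)--(A8). Establishing that a purported new structure must, after applying the saturation axiom (A7) and the regularity axiom (A8), coincide with one of the four canonical ones is the crux; the two-dimensional lower bound in part 2) is comparatively straightforward once a suitable parametrization by boundary behaviour at infinity is chosen.
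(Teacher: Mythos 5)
Your overall picture for part 1 (the only freedom is in the admissible coverings; the obstruction to definability lives at the two ends; four independent combinations) agrees with the paper's, and your part 2 idea of parametrizing localizations by behaviour at infinity is in the same spirit as the paper's construction. But the crux of part 1 is precisely the exhaustiveness step that you defer, and the mechanism you point to cannot supply it: you expect o-minimality together with the axioms (A5)--(A8) to rule out localizing ``in the middle'' of $\R$, yet o-minimality and all eight axioms hold equally over any o-minimal expansion of a real closed field $R\not\cong\mb{R}$, where Theorem \ref{nonreal} exhibits uncountably many structures on the line $R^1$ --- including ones localized exactly in the middle, at irrational cuts of $R\cap\mb{R}$ or at the ends of galaxies. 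So no argument resting only on o-minimality and the generalized-topology axioms can prove part 1; the indispensable ingredient is the archimedean completeness (local compactness) of $\mb{R}$, and your sketch never invokes it.

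Concretely, the paper's proof of part 1 runs as follows: take an admissible covering $\mc{U}$ of $N$ by open intervals that are affine definable subspaces. If $I\in\mc{U}$ is bounded, it is relatively compact, so its closure is covered by finitely many boxes on which $N$ agrees with the affine line, forcing $I$ to be the standard interval --- this is where $\mb{R}$ (and not mere o-minimality) enters, and it is what kills interior localization, since every cut of $\mb{R}$ is realized by a point around which ``comes from'' pins down the structure. If $I$ is unbounded, every bounded subinterval is standard and $I$ is an affine definable space, hence $I$ is standard as well. The classification is then a case split on whether $+\infty$ and $-\infty$ occur as endpoints of members of $\mc{U}$: if neither does, the coarsening axiom (A7) makes $\{(-n,n)\}_{n\in\mb{N}}$ admissible and $N=(\mb{R}_{\mc{S}})_{loc}$; if both do, $N$ is a finite union of standard intervals, hence the affine line; the one-sided cases give $(\mb{R}_{\mc{S}})_{loc,\pm}$ of Example \ref{halfloc}. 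Note also that your reduction ``determined by which bounded definable open sets are small'' is miscalibrated: all bounded definable open sets are small in any $N$ coming from $\mb{R}_{\mc{S}}^1$; the distinguishing data are the admissible coverings themselves (equivalently, which unbounded definable sets are definable as subspaces, cf. Remark \ref{6}). For part 2 your sectors-at-infinity idea is workable but underdetermined --- arbitrary sets of directions need not yield definable covering sets, and distinctness must be checked; the paper's one-parameter family does both at once, taking $N_a$ given by the admissible covering $\{(x,y):y<ax+n\}_{n\in\mb{N}}$ for each slope $a\in\mb{R}$, where for $b\neq a$ the half-plane $\{y<bx\}$ is definable in $N_b$ but contained in no $\{y<ax+n\}$, so the uncountably many $N_a$ are pairwise distinct.
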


\begin{proof}
1) Assume $N$ is an object of $\lds(\mb{R}_{\mc{S}})$ coming from $\mb{R}_{\mc{S}}^1$. Each open subset of $N$ is a countable union of open intervals. Each open definable set is a finite union of open intervals, since it has a finite number of connected components. There is an admissible covering $\mc{U}$ of the real line by open intervals that are affine definable spaces. If such an interval is bounded, then it is relatively compact, and a standard
interval (that is an interval as an ``embedded'' subspace of $\mb{R}_{\mc{S}}$).
If such an interval is not bounded, then each bounded subinterval is standard, hence again the whole interval is standard, since it is an affine definable space.
If there are no infinite intervals in $\mc{U}$, then the open family $\{ (-n,n)\}_{n\in \mb{N}}$ is admissible, and $N=(\mb{R}_{\mc{S}})_{loc}$.
If both  $+\infty$ and $-\infty$ are ends of intervals from  $\mc{U}$ , then $N$ is a finite union of standard intervals, thus it is isomorphic to the affine space $\mb{R}_{\mc{S}}$. Similarly, the two other cases give the spaces  $(\mb{R}_{\mc{S}})_{loc,+}$, $(\mb{R}_{\mc{S}})_{loc, -}$.

 


2) Choose a slope $a\in \mb{R}$ and consider the space
$N_a$ defined by the admissible covering
$ \{ U_{a,n} \}_{n\in \mb{N}}$, where $U_{a,n}=\{ (x,y)\in \mb{R}^2:\: y<ax+n\}$ are definable sets.
All $N_a$, for $a\in \mb{R}$, are different objects of $\lds(\mb{R}_{\mc{S}})$. 
\end{proof}

Remind (from non-standard analysis) that each non-archimedean $R$ is partitioned into many \textbf{galaxies} (two elements $x,y\in R$ are in the same galaxy if their ``distance'' $|x-y|$ is bounded from above by a natural number).

\begin{thm}\label{nonreal}
For any o-minimal expansion $R$ of a field  not isomorphic to $\mb{R}$
 there are already uncountably many different objects of $\lds(R)$ that come from the line
 $R^1$.
\end{thm}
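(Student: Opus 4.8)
The plan is to exploit the single feature that singles out $\mb{R}$ among real closed fields: $\mb{R}$ is the only Dedekind complete one. Hence $R\not\cong\mb{R}$ means $R$ is not Dedekind complete, so there is a \emph{gap}, i.e.\ a cut $(L,U)$ with $L\cup U=R$, $L<U$, where $L$ has no greatest and $U$ no least element, and no point of $R$ realizes the cut. (For non-archimedean $R$ such gaps occur between neighbouring galaxies, which is why ``many galaxies'' forces many spaces; phrasing everything through gaps handles the archimedean non-complete case uniformly as well.) First I would show that a single gap already yields an object of $\lds(R)$ coming from $R^1$, then that distinct gaps yield distinct objects, and finally that there are uncountably many gaps.

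Given a gap $\alpha=(L,U)$, I would define $N_\alpha$ as the admissible union (in the sense of Example \ref{halfloc}) of the family $\{(-\infty,\ell):\ell\in L\}\cup\{(u,+\infty):u\in U\}$ of affine definable open intervals, equipped with the sheaf of definable continuous functions on each piece. Since $L$ has no maximum, $U$ no minimum, and no point sits at the cut, this family covers $R$, so $N_\alpha$ is a locally definable space on the set $R$. To see that it comes from $R^1$ I would verify the two clauses of the definition: the strong topology is the standard one because every small box about a point lies inside a covering interval, and the induced structure on such a box is affine; given $x\in R$ one simply chooses the box on the side of the gap to which $x$ belongs, the cut itself not being a point of $R$, so that no box has to straddle it.

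Next I would separate the $N_\alpha$ by means of their \emph{small} (equivalently definable) subsets. For these spaces a set is small exactly when it is disjoint from some closed interval $[\ell,u]$ with $\ell<\alpha<u$, i.e.\ ``bounded away from the gap $\alpha$''; consequently the complement $R\setminus[\ell,u]$ of a closed bounded interval is small in $N_\alpha$ if and only if $[\ell,u]$ straddles $\alpha$. This recovers $\alpha$ from the generalized topology of $N_\alpha$, so $\alpha\neq\beta$ gives $N_\alpha\neq N_\beta$. It then remains to count gaps: translation $r\mapsto\alpha+r$ produces $|R|$ pairwise distinct gaps (a single nonzero translation cannot fix the bounded-above set $L$, else $L$ would be unbounded), which already settles the case $|R|>\aleph_0$; when $R$ is countable, $(R,<)$ is a countable dense order without endpoints, hence order-isomorphic to $(\mb{Q},<)$ by Cantor's theorem and so carries $2^{\aleph_0}$ gaps. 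Either way there are uncountably many gaps, hence uncountably many objects of $\lds(R)$ coming from $R^1$.

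The main obstacle I anticipate is the computation of the small sets of $N_\alpha$: \emph{a priori} smallness quantifies over \emph{all} admissible coverings, and one must check that it reduces to essential finiteness against the single defining covering. This should follow because every small set lies in a small open set (from the Facts above), and the small open sets of $N_\alpha$ are precisely the definable opens that avoid a gap-straddling closed interval; but this reduction is the step requiring care, exactly as in the four-object analysis behind Theorem \ref{realline}. Once the small sets are pinned down, both the distinctness of the $N_\alpha$ and the gap count are routine.
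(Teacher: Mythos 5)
Your construction and separation steps are sound, and in fact run parallel to the paper's: your spaces $N_\alpha$, given by the admissible covering $\{(-\infty,\ell):\ell\in L\}\cup\{(u,+\infty):u\in U\}$ attached to an unrealized cut, are exactly the kind of objects the paper builds (its Case 2 spaces $N_r$ are your $N_\alpha$ for the cut induced by $r\in\mb{R}\setminus R$ on $R\cap\mb{R}$, and its Case 1 spaces localize at the gap atop a galaxy), and your recovery of the cut from the small subsets of $N_\alpha$ is the correct distinguishing invariant. The genuine gap is in your cardinality count. The claim ``a single nonzero translation cannot fix the bounded-above set $L$, else $L$ would be unbounded'' is false whenever $R$ is non-archimedean: take $\alpha$ to be the gap between the finite and the infinite elements, i.e.\ $L=\{x\in R: x<n \mbox{ for some } n\in\mb{N}\}$. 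Then $L$ is bounded above by any infinite element, yet $L+r=L$ for \emph{every} finite $r$ (in particular $L+1=L$). Your inference silently uses that $\ell, \ell+r, \ell+2r,\dots$ is cofinal in the upper bounds of $L$, which is precisely the archimedean property. In general the translates of a gap $\alpha$ are parametrized by $R/G_\alpha$, where $G_\alpha=\{r: L+r=L\}$ is a convex subgroup that can be enormous (for the finite/infinite gap, $G_\alpha$ is the whole convex hull of $\mb{Z}$), so ``$|R|$ pairwise distinct translates'' is simply not available. Since every $R$ properly containing $\mb{R}$ is non-archimedean, this breaks your argument on a main case of the theorem, not a corner case; your Cantor argument covers countable $R$ and your translation argument covers archimedean uncountable $R\subsetneq\mb{R}$, but uncountable non-archimedean $R$ is left unproved.

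It is instructive that the paper's proof avoids exactly this trap by a case split with two different counting mechanisms: if $\mb{R}\subseteq R$, it gets uncountably many gaps from uncountably many \emph{galaxies} (scaling an infinite element $t$ by distinct reals $c$ gives $ct$ in pairwise distinct galaxies, so the relevant translates are by elements of different archimedean scale, not by a fixed $r$); if $\mb{R}\not\subseteq R$, it uses the uncountably many cuts of $R\cap\mb{R}$ determined by $r\in\mb{R}\setminus R$, where density of $\mb{Q}$ guarantees distinctness. To repair your unified approach you would need to prove that every $R\not\cong\mb{R}$ has uncountably many gaps, and the natural proof of that (scalings by residues or cuts of the value group when $R$ is non-archimedean, your translation argument in the archimedean case) reintroduces essentially the paper's dichotomy. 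So: the geometric half of your proposal is correct and essentially the paper's, but the set-theoretic half has a real hole in the non-archimedean case and needs the case analysis you were trying to avoid.
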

\begin{proof}
\textit{Case 1: $R$ contains $\mb{R}$.}

The set of galaxies of $R$ is uncountable. For any galaxy $G$ of $R$ take $x\in G$ and consider the space $N_G$ defined as the disjoint generalized topological union of the following: all the galaxies $G' >G$, treated each one as a  locally definable space (see Remark \ref{galaxy}), and the space
$N^{'}_G$ given by the admissible covering  $ \{(-\infty, x+n)\}_{n\in \mb{N}}$, which is the union of all galaxies $G''\leq G$ ``partially localized'' (only) at the end of $G$.

All of $N_G$ are different objects of $\lds(R)$ and come from $R^1$.

\textit{Case 2: $R$ does not contain $\mb{R}$.}

The field $R\cap \mb{R}$ has uncountably many irrational cuts, determined by elements $r\in \mb{R}\setminus R$. For each such $r$, consider the space
$N_r$ over $R$ defined by the admissible covering
$$ \{ (-\infty, s) \}_{s<r} \cup \{  (s,+\infty)\}_{s>r}, $$
where $s\in R \cap \mb{R}$.
This space consists of two connected components given by the conditions $x<r$ and $x>r$.
All of $N_r$, $r\in \mb{R}\setminus R$, are different objects of $\lds(R)$
and come from $R^1$.
\end{proof}

There are more general sets that are called in \cite{Fischer}, Definition 7.1(a), ``locally definable''. We will call them local subsets.  (A subset $Y$ of a space $X$ is a \textbf{local  subset} if for each point $y$ only of $Y$ there is an open definable neighborhood $U$ of $y$ in $X$ such that $U\cap Y$ is definable.)
They can be given  a locally definable space structure, but their properties are not  nice: they are closed only on finite intersection, and are not closed under complement or even finite union.
   
The locally definable space on such $Y\subseteq X$ may be introduced by the following admissible covering
$$ \mc{U}_Y = \{ Y\cap U_i \mid U_i\mbox{ is a definable open subset of $X$ and }Y\cap U_i  \mbox{ is definable}\} .$$
(The above definition does not depend on any arbitrary  choice of an admissible covering, contrary to Example 3.1 and Example 3.3 of \cite{ldh}.)

Local subsets are \underline{not} (as such) called subspaces!
Their use often does not recognize  the  space structure given above
  (even if they are definable sets), since we mainly want to study ``locally definable functions'' on them (see Definition 7.1 (b) of \cite{Fischer}).
(Consider a function ``locally definable'' if its domain and codomain are local subsets of some objects of $\lds(R)$, and all function germs of this function at points of its domain are definable. A function germ $f_x$  at $x$ is called definable if some definable neighborhood of $x$ is mapped by $f$ into a definable neighborhood of $f(x)$ and the obtained restriction of $f$ is a definable mapping.)

The following  examples make the above considerations more clear. 

\begin{exam}
The semialgebraic set $(-1,1)_{\mb{R}}$ inherits an affine semialgebraic space structure from $\mb{R}$.  Nevertheless, when speaking about ``locally semialgebraic functions'' into $\mb{R}$ (in the sense of  Definition 7.1 (b) of \cite{Fischer}) we want to treat it as the ''localized'' open interval $(-1,1)_{loc}$, which is not a  semialgebraic space.
Define, for example, functions $w:\mb{R}\to \mb{R}$ and $u:(-1,1)\to \mb{R}$
by formulas
$$w(x)=\left\{ \begin{array}{ll}
x-4k, & x\in [4k-1,4k+1), k\in \mb{Z},\\
2+4k-x, & x\in [4k+1,4k+3), k\in \mb{Z},
\end{array} \right.$$
and
$$ u(t)=w(\frac{t}{\sqrt{1-t^2}}).$$
Then $u$ is "locally semialgebraic" (and not semialgebraic).  
\end{exam}

\begin{exam}
Consider the semialgebraic set $S=(-1,1)^2\cup \{ (1,1)\}$ in $\mb{R}^2$. The fact of being a ``locally semialgebraic function'' (in the sense of Definition 7.1 (b) of \cite{Fischer}) on $S$ (into $\mb{R}$) does not reduce to being a morphism
of any locally (and even weakly) semialgebraic space that can be formed by redefining the notion of an admissible covering of the space $S$.
In particular, each of the functions $F_n:S\to \mb{R}$ $(n=1,2,3,...)$,
where 
$$F_n (x,y)=\left\{ \begin{array}{ll}
0, & y\geq 1-\frac{1}{n},\\
w(\frac{1-\frac{1}{n}-y}{1-x}), &  y<1-\frac{1}{n},
\end{array}\right.$$
is "locally semialgebraic" (function $w$ is defined as in the previous example).
\end{exam}

In general, definable spaces and locally definable spaces do not behave well enough for being used in homotopy theory. The right choice of assumptions (as in the semialgebraic case of \cite{LSS}) are: regularity
and one new called ``paracompactness'', which is only an analogue of the topological notion.

\section{Regular paracompact locally definable spaces}
 
One of the  reasons why we pass to the locally definable spaces 
 is the need of existence of covering mappings with infinite (for example countable) fibers.

The following example is a generalization of an example from \cite{DK6}.

\begin{exam}[cf. 5.14 in \cite{DK6}]\emph{The space $\textrm{Fin} (R)$.} 
We look for the \emph{universal covering} of the unit circle $S^1 \subseteq R^2$.
We will see soon that (as in topology) $\pi_1 (S^1 )=\mathbb{Z} $, so the universal covering should have countable fibers.
Let $\textrm{Fin} (R)$ be the locally definable space
introduced by the admissible covering by open intervals 
$\{(-n,n)\}_{n\in \mathbb{N}}$ in $R$. 
 There is a surjective semialgebraic (so definable) morphism  $e:[0,1]\rightarrow S^1$
 that maps 0 and 1 to
the distinguished point on $S^1$ and is injective elsewhere. Then the universal covering mapping
$p: \textrm{Fin}(R)
\rightarrow S^1$ defined by $p(m+x)=e(x)$,
where $m\in \mathbb{Z}, x\in [0,1]$,  is a morphism
of locally definable spaces.
\end{exam}

A family of subsets of a locally definable space is \textbf{locally finite} if
 each open definable subset of the
space meets only finitely many members of the family.

A locally definable space is called \textbf{paracompact} if there is a
 locally finite covering of the whole space by open definable subsets.
 (A locally finite covering must be admissible,  since ``admissible'' means: when restricted to an open definable subspace, there is a finite subcovering. Shortly: ``admissible'' means exactly ``locally \textsl{essentially} finite''.)

\begin{rem}\label{galaxy}
The locally definable space $\textrm{Fin} (R)$ given by the
admissible covering  $\{ (-n,n): n\in \mathbb{N}\} $ is paracompact for each $R$, since there exists a locally finite covering giving the same space. 
(Notice that if $R$ contains $\mb{R}$, then $\bigcup\limits^a_{r\in \mb{R}_{+}} (-r,r) =\bigcup\limits^a_{n\in \mb{N}} (-n,n)$.)
 In the language of nonstandard analysis, we can say that each galaxy may be considered as a regular paracompact locally definable space.
\end{rem}

Direct (i. e. cartesian) products preserve regularity and paracompactness of locally definable spaces (cf. I.4.2c) and I.4.4e) in \cite{LSS}). We will denote the category of regular paracompact locally definable spaces over $R$ by  $\rplds(R)$.
\begin{exam}
The spaces from the proofs of Theorems \ref{realline} and \ref{nonreal} are objects of $\rplds(R)$.
\end{exam} 

\begin{rem}
A \textbf{connected} (in the sense of  generalized topology: the space cannot be decomposed into two open disjoint nonempty subspaces)  regular paracompact locally definable space has a countable admissible covering by definable open subsets (so called \textbf{Lindel\"of property} in \cite{LSS}). 
If it has finite dimension $k$, then it can be embedded into the cartesian power  Fin$(R)^{2k+1}$. This holds by embedding into a partially complete space, triangulation (see Theorems \ref{embed}, \ref{triang}   below) and  Theorem    3.2.9  from a book of Spanier \cite{spa} (see also II.3.3 of \cite{LSS}).
\end{rem} 

\begin{toprem}
 The notion of paracompactness introduced above differs from
the topological one. Each definable space is paracompact. There are Hausdorff definable (so paracompact) spaces which are not regular. With the regularity assumption, each paracompact space is normal and admits partition of unity. Paracompactness is inherited by all subspaces and cartesian products. The Lindel\"of property gives paracompactness only with the assumption that the closure of a definable set is definable.
\end{toprem}

Fiber products exist in the category of locally definable spaces over $R$ (cf. I.3.5 of \cite{LSS}).
A morphism $f:M\rightarrow N$ between locally definable spaces
is called \textbf{proper} if it is universally closed in the sense of the
 generalized topology. This means
that for each morphism of locally definable spaces $g:N' \rightarrow N$
the induced morphism $f' : M\times_{N} N' \rightarrow N'$ in the pullback
 diagram is a closed
mapping in the sense of the generalized topological spaces (it maps
 closed subspaces onto closed subspaces). If all restrictions of $f$ to closed definable subspaces are proper, then we call $f$ \textbf{partially proper}. 

A Hausdorff locally definable space $M$ is called \textbf{complete} if the 
 morphism from $M$ to the one point space is proper. 
 Each paracompact complete space is affine definable (compare I.5.10 in \cite{LSS}).
Moreover, $M$ is called \textbf{locally complete} if each point has a complete neighborhood. (Each locally complete locally definable space is regular, cf. I.7, p. 75 in \cite{LSS}). It is  \textbf{partially complete} if every closed definable subspace is complete. Every partially complete regular space is locally complete (cf. I.7.1 a)) in \cite{LSS}).

\begin{toprem}
 This notion of properness is analogical to a notion 
from algebraic geometry. Partial completeness is the key notion. 
\end{toprem}

 Let $M$ be a locally complete paracompact space.
 Take the family $\stackrel{\circ}{\gamma}_c (M)$ of all such open definable subsets $U$ of $M$ that $\overline{U}$ is complete. Introduce a new locally definable space
$M_{loc}$, the \textbf{localization} or \textbf{partial completization} of $M$, on the same underlying set  taking  $\stackrel{\circ}{\gamma}_c (M)$ as an admissible covering by small open subspaces (cf. I.2.6 in \cite{LSS}).  The new space is regular partially complete (not only locally complete) and the identity mapping from $M_{loc}$ to $M$ is a morphism, but $M_{loc}$ may not be paracompact,
see Warning-Example \ref{rloc}. 
Notice that localization leaves the strong topology unchanged. 

\begin{toprem}
 Localization is similar  to the process of
passing to $k$-spaces (they are exactly the compactly generated spaces if hausdorffness is assumed) in homotopy theory. (Complete spaces play the role of compact spaces.) But notice that each topological locally compact space is a $k$-space.
\end{toprem}

\begin{rem}
Only one of the four locally definable spaces of Theorem \ref{realline} for each $\mb{R}_{\mc{S}}$ is partially complete, namely $(\mb{R}_{\mc{S}})_{loc}$.
\end{rem}

 A \textbf{paracompact locally definable manifold} of  dimension $n$ over $R$ is
a Hausdorff locally definable space over $R$ that has  a locally finite covering
by definable open subsets that are isomorphic to open balls in $R^n$.
(Such a space is paracompact and locally complete, so regular, cf. I.7 p.75 in \cite{LSS}.)
If additionally the transition maps are (definable) $C^k$-diffeomorphisms ($k=1,...,\infty$), then  we  get \textbf{paracompact locally definable $C^k$-manifolds}. Notice that the differential structure of such manifolds may be encoded by sheaves  (in the sense of the strong topology) of $C^k$ functions. We get the following original result:

\begin{thm}\label{manifold}
 Paracompact (in the topological sense) analytic manifolds of dimension $n$ are in bijective correspondence with partially complete paracompact locally definable $C^{\infty}$-manifolds over $\mathbb{R}_{an}$ of the same dimension.
\end{thm}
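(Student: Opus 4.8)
The plan is to exhibit two mutually inverse assignments, $\tau$ sending a partially complete paracompact locally definable $C^{\infty}$-manifold $M$ over $\mathbb{R}_{an}$ to a topological analytic manifold, and $\sigma$ going back, and to show that each is well defined and that $\sigma\tau=\mathrm{id}$ and $\tau\sigma=\mathrm{id}$ on the respective isomorphism classes (the dimension being visibly preserved). The bridge between the two worlds is the fact that over $\mathbb{R}_{an}$ a definable $C^{\infty}$ function on an open definable set is real analytic (Tamm's theorem: a subanalytic $C^{\infty}$ function is analytic). Granting this, the transition maps of a locally definable $C^{\infty}$-manifold over $\mathbb{R}_{an}$ are automatically real analytic in the strong topology. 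To define $\tau(M)$ I equip the underlying set of $M$ with its strong topology and its atlas of open-ball charts; the transition maps being definable $C^{\infty}$, hence analytic, this is a genuine real analytic $n$-manifold. It is topologically paracompact: each connected component is, by the Lindel\"of property of connected regular paracompact locally definable spaces noted above, covered by countably many definable open balls, hence second countable and so metrizable, and a topological disjoint union of paracompact spaces is paracompact.

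For $\sigma$, I start from a paracompact analytic $n$-manifold $X$ and choose, using topological paracompactness, a locally finite atlas $\{(U_i,\phi_i)\}$ with each $\phi_i$ an analytic diffeomorphism of $U_i$ onto an open ball $B_i\subseteq\mathbb{R}^n$, arranged so that $\overline{U_i}$ is compact and $\phi_i$ extends analytically to a neighborhood of $\overline{U_i}$ (shrink the charts). On the overlaps the transition maps $\phi_j\circ\phi_i^{-1}$ are analytic on bounded sets whose closures lie in the domain of analyticity; after an affine rescaling they become restricted analytic functions, hence definable in $\mathbb{R}_{an}$. Gluing the definable balls $B_i$ along these definable transition maps produces a locally definable $C^{\infty}$-manifold $\sigma(X)$. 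It is paracompact in the definable sense: a definable (hence small) open subset is contained in finitely many charts and has compact strong closure, so it meets only finitely many $B_i$, and the chosen atlas is therefore definably locally finite. It is partially complete: a closed definable subspace $A$ lies in finitely many relatively compact charts, so $\overline{A}\subseteq\bigcup_i\overline{U_i}$ is compact; since $\mathbb{R}_{an}$ is archimedean, complete means compact (cf. Remark \ref{galaxy}), whence $A$ is complete.

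Finally, the two assignments are mutually inverse. The composite $\tau\sigma$ returns $X$ on the nose, since the strong topology and the analytic atlas of $\sigma(X)$ are exactly the topology and atlas we started from. For $\sigma\tau$ one must check that partial completeness pins down the generalized topology of $M$. The point is that in a partially complete space over $\mathbb{R}_{an}$ the definable subsets are precisely the relatively compact, subanalytic-in-charts subsets: a closed definable subspace is complete, hence (again archimedean) compact, so every definable subset has compact strong closure and is determined, together with the admissibility of coverings, by the strong topology and the analytic atlas. Thus re-gluing the recovered analytic atlas reproduces the original admissible covering, and $\sigma(\tau(M))=M$.

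I expect the main obstacle to be exactly this last step: proving that partial completeness forces the definable subsets to coincide with the relatively compact subanalytic ones, so that the generalized topology is recovered uniquely from the strong topology together with the $C^{\infty}$ (hence analytic) atlas. This is where the archimedean nature of $\mathbb{R}_{an}$, equating completeness with compactness (cf. Remark \ref{galaxy}), is indispensable; over a non-archimedean structure the analogous statement would fail, and this is also the reason that partial completeness, rather than mere local completeness, is the correct hypothesis on the definable side.
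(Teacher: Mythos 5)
Your proof is correct and takes essentially the same route as the paper's: the same shrunk locally finite atlas of relatively compact charts extending analytically beyond their closures, the same bridge that definable $C^{\infty}$ maps over $\mathbb{R}_{an}$ are analytic (Tamm's theorem, which the paper leaves implicit), and the same key point that partial completeness forces the definable sets to be exactly the relatively compact subanalytic ones, so the generalized topology is recovered from the strong topology and the atlas (cf.\ Remark \ref{6}). Only your citation of Remark \ref{galaxy} for ``complete $=$ compact'' over $\mathbb{R}$ is slightly off target, but the fact itself is standard and the argument stands.
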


\begin{proof}
\textit{A paracompact analytic manifold induces a  paracompact locally definable
 $C^{\infty}$-manifold over $\mathbb{R}_{an}$}: Each paracompact manifold (even a topological one) is regular.
We may assume (by shrinking the covering of the manifold  by chart domains if necessary) that the analytic structure of the manifold is given by a locally finite atlas consisting of charts whose domains and ranges are relatively compact subanalytic sets, and the charts extend analytically beyond the closures of chart domains. By taking a nice locally finite refinement, we additionally can get the chart domains and chart ranges (analytically and globally subanalytically) isomorphic to open balls in $\mb{R}^n$. 
Now the chart domains form a locally finite covering of the analytic manifold  that defines a paracompact locally definable manifold over $\mb{R}_{an}$. The transition maps (being analytic diffeomorphisms) are $\R_{an}$-definable $C^{\infty}$-diffeomorphisms 
of open, relatively compact, subanalytic subsets of some $\mathbb{R}^n$.
Thus we get a locally definable $C^{\infty}$-manifold 

Notice that the relatively compact subanalytic sets are now the definable sets and
the subanalytic sets are now the locally definable sets.

The obtained locally definable space is partially complete.
 
\textit{Vice versa}:
 A  paracompact locally definable $C^{\infty}$-manifold over
$\mathbb{R}_{an}$ induces a Hausdorff (analytic) manifold with analytic, globally subanalytic transition maps and globally subanalytic chart ranges.
We may assume that the manifold is connected.
Its locally finite atlas is countable (cf. I.4.17 in \cite{LSS}),
 so the manifold is a second countable topological space, and finally  a paracompact analytic manifold.
All locally definable subsets are now subanalytic (they are globally subanalytic in every chart).
 
\textit{One-to-one correspondence}:
If the paracompact locally definable manifold is partially complete, then
the closure of a chart domain is a closed definable set (cf. I.4.6 of \cite{LSS}) and a complete definable set, which means it is a compact subanalytic set. Thus chart domains are relatively compact. ``Locally'' in the sense of locally definable spaces means exactly ``locally'' in the topological sense.
It follows that: the definable subsets are exactly the relatively compact subanalytic subsets,
 and the locally definable subsets are exactly the subanalytic subsets of the obtained paracompact analytic manifold. Notice that the strong topology does not change when we pass from one type of a manifold to the other.
 So the structure of the partially complete locally definable space is uniquely determined (see Remark \ref{6}).
Both the structures of a $C^{\infty}$ locally definable manifold over $\mb{R}_{an}$ and the structure of an analytic manifold do not change during the above operations (only a convenient atlas was chosen).
\end{proof}

\begin{rem} \label{analytic}
 A real function on a (paracompact) analytic manifold $M_{an}$ is analytic iff it is a $C^{\infty}$ morphism from the corresponding partially complete paracompact locally definable $C^{\infty}$-manifold (call it $M_{ldm}$) into $\mathbb{R}_{an}$ as an affine definable space. (See 5.3 in \cite{DM}.)
\end{rem}

Analogously, for each expansion $\mb{R}_{\mc{S}}$ of the field $\mb{R}$ that is a reduct of $\mb{R}_{an}$, partially complete paracompact locally definable $C^{\infty}$-manifolds over  $\mb{R}_{\mc{S}}$ correspond uniquely to  paracompact analytic manifolds of some special kinds. Then the locally definable subsets in the sense of a given locally definable manifold (as well as in the sense of its ``expansions'', see below) form nice ``geometric categories''. This in particular generalizes the \textit{analytic-geometric categories} of van den Dries and Miller \cite{DM}.

The above phenomenon may be explained in the following way: the analytic manifolds $\mb{R}^n$
($n\geq 1$), which model all analytic manifolds, have a natural notion of smallness. A subset $S\subset \mb{R}^n$ is \textbf{topologically small} if it is \textit{bounded} or, equivalently,  \textit{relatively compact}. In the corresponding partially complete paracompact locally definable $C^{\infty}$-manifolds $Fin(\mb{R}_{an})^n=(\mb{R}_{an})_{loc}^n = \bigcup\limits^a_{k\in \mb{N}} \: (-k,k)^n = Fin((\mb{R}_{an})^n)$ over $\mb{R}_{an}$  this means that $S$ is a \textit{small} subset in the sense of the generalized topology
(if $S$ is subanalytic, then this means \textit{definable}). One could also use  the notion of being \textit{relatively complete} in this context. 
It is partial completeness that gives analogy between the usual topology and the generalized topology.

\begin{rem}
The generalized topology of the space $M_{ldm}$ of Remark \ref{analytic} is ``the subanalytic site'' considered by microlocal analysts (see \cite{KS}).
More generally:  the generalized topology of each paracompact locally definable manifold may be considered as a ``locally definable site''.
It is also possible to consider all subanalytic subsets of a real analytic manifold as open sets of a generalized topological space. But then the strong topology becomes discrete.
\end{rem}

\begin{ew}[cf.  I.2.6 in \cite{LSS}] $($\emph{The space $R_{loc}$.}$)$ \label{rloc}
The structure $R$ as an affine definable space,  is locally complete but not complete.  For such a  space  $R_{loc}$ is
introduced by the admissible covering
 $\{ (-r,r): r\in R_{+} \} $.
 This is a locally (even regular partially) complete space which is not definable.
If the cofinality of $R$ is uncountable, then $R_{loc}$ is not paracompact!
Here the morphisms from $R$ to
$R$ are ``the continuous definable functions'', and the
morphisms from ${R}_{loc}$ to $R$ are ``the continuous
 locally (in the sense of $R_{loc}$) definable functions''. (The latter case  includes
 some nontrivial periodic funtions for an archimedean $R$.)
 \end{ew}

A series of topological facts have counterparts for regular paracompact locally definable spaces.

\begin{lem}[cf. \cite{DK6} and \cite{LSS}, chapter I] \label{15}
Let $M$ be an object of $\rplds(R)$. Then:

a) \textbf{[tautness]} the closure of a definable set is definable (cf. I.4.6);

b) \textbf{[shrinking of coverings lemma]} for each locally finite covering
 $(U_{\lambda} )$ of $M$ by open locally definable sets 
there is a covering $(V_{\lambda} )$ of $M$ by open locally definable
 sets such that $\overline{V_{\lambda} }\subseteq
U_{\lambda}$ (cf. I.4.11);

c) \textbf{[partition of unity]} for every locally finite covering
 $(U_{\lambda} )$ of $M$ by open locally definable
subsets there is a subordinate partition of unity, i. e. there is a
 family of morphisms
 $\phi_{\lambda} : M\rightarrow [0,1]$ such that $\mathrm{supp}
 \phi_{\lambda} \subseteq U_{\lambda}$
and $\sum_{\lambda} \phi_{\lambda} =1$ on $M$ (cf. I.4.12);

d) \textbf{[Tietze's extension theorem]} if $A$ is a closed subspace of $M$
 and $f:A\rightarrow K$ is a morphism into a
convex definable subset $K$ of $R$, then there exists a morphism
 $g:M\rightarrow K$ such that $g|A =f$ (cf. I.4.13);

e) \textbf{[Urysohn's lemma]} if $A,B$ are disjoint closed locally
 definable subsets of $M$, then there is a
morphism $f:M\rightarrow [0,1]$ with $f^{-1} (0)=A$ and $f^{-1} (1)=B$ (cf. I.4.15).
\end{lem}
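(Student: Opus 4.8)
The plan is to transport the classical point-set arguments into the generalized topology, exploiting two features of an object $M$ of $\rplds(R)$: it is normal (regular plus paracompact, as recorded in the Topological Remark), and ``admissible'' means ``locally essentially finite'', so a definable (equivalently small) set meets only finitely many members of any locally finite family. I would establish the parts in the order (a) $\to$ (b) $\to$ (c) $\to$ (d),(e). Wherever an infinite construction produces a function I will verify that it is a genuine morphism by the same device throughout: restrict to an arbitrary definable open subspace $U$, on which the construction collapses either to a finite one or to a uniform limit of continuous definable $R$-valued functions, and then glue by the sheaf property of the structure sheaf.

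For (a), I would first use regularity to produce a covering of $M$ by definable open sets $W_\mu$ with $\overline{W_\mu}$ definable: each point has an affine definable open neighbourhood $V$ (by the definition of a locally definable space), and regularity separates the point from $M\setminus V$, giving a definable open box $W$ with $x\in W$ and $\overline{W}\subseteq V$; since $\overline{W}\subseteq V$, this closure is computed inside the affine definable space $V$ and is therefore definable (Fundamental Example). Paracompactness refines this to a locally finite such covering. Then for definable $A$, smallness forces $A$ to meet only finitely many $W_\mu$; as a locally finite family satisfies $\overline{\bigcup_\mu S_\mu}=\bigcup_\mu \overline{S_\mu}$, I obtain $\overline{A}=\bigcup_{\mu\in F}\overline{A\cap W_\mu}$ over a finite $F$, and each $\overline{A\cap W_\mu}$ is a closed locally definable subset of the definable set $\overline{W_\mu}$, hence definable; a finite union of definable sets is definable.

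For (b) I would well-order the index set and shrink the $U_\lambda$ one at a time: at stage $\lambda$ the set $C_\lambda = M\setminus\bigl(\bigcup_{\mu<\lambda} V_\mu \cup \bigcup_{\mu>\lambda} U_\mu\bigr)$ is closed and contained in $U_\lambda$, so normality (with tautness keeping the separating sets locally definable) yields an open locally definable $V_\lambda$ with $C_\lambda\subseteq V_\lambda$ and $\overline{V_\lambda}\subseteq U_\lambda$, while local finiteness guarantees the $V_\lambda$ still cover $M$. Part (c) then follows the standard recipe: shrink twice to get $\overline{V_\lambda}\subseteq W_\lambda$ and $\overline{W_\lambda}\subseteq U_\lambda$, use normality to get morphisms $\psi_\lambda:M\to[0,1]$ equal to $1$ on $\overline{V_\lambda}$ with $\mathrm{supp}\,\psi_\lambda\subseteq U_\lambda$, observe that $\sum_\lambda \psi_\lambda$ is a morphism (on each definable open only finitely many summands survive, and these glue by the sheaf property) which is everywhere positive, and normalise by setting $\phi_\lambda=\psi_\lambda/\sum_\mu\psi_\mu$.

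Finally (d) and (e) come from normality together with partition of unity. For Urysohn (e) the point beyond the classical statement is the prescription $f^{-1}(0)=A$, $f^{-1}(1)=B$: using the Lindel\"of property of a connected component I would realise each closed locally definable set $A,B$ as a zero set by summing a convergent series of Urysohn functions supplied by a countable admissible exhaustion, the uniform convergence keeping the limit a morphism on each definable open piece. Tietze (d) is then the usual approximation argument for $K$ a bounded interval, building $g$ as a uniformly convergent series of morphisms with geometric error control and reducing a general convex definable $K\subseteq R$ to this case. The main obstacle throughout is uniform and of a single kind: none of these functions built by infinite processes is \emph{manifestly} a morphism, only manifestly continuous. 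The resolution is always the same --- restrict to a definable open $U$, where local finiteness turns infinite sums into finite ones and where a uniform limit of continuous definable functions is again continuous and definable (a basic o-minimal fact) --- and then the sheaf property promotes the globally defined function to a morphism of locally definable spaces.
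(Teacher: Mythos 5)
Your master device fails, and it is the load-bearing step for parts (d) and (e). The claim that ``a uniform limit of continuous definable functions is again continuous and definable (a basic o-minimal fact)'' is false: by Weierstrass approximation every continuous function on $[0,1]$ is a uniform limit of polynomials, so uniform limits of semialgebraic functions are in general not definable in any o-minimal structure. Worse, for $R\neq\mathbb{R}$ the field $R$ need not be Cauchy complete, so a series such as $\sum_n 2^{-n}\psi_n(x)$ need not converge pointwise at all (over the real algebraic numbers, $\sum_n 2^{-n}a_n$ with digits $a_n\in\{0,1\}$ can ``converge'' only to a transcendental). Hence your proofs of (d) (successive approximation with geometric error control) and (e) (realizing $A$, $B$ as zero sets by a convergent series along a Lindel\"of exhaustion) collapse at the point where the limit is taken. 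In this category only \emph{locally finite} sums survive --- finite on each definable open subspace and glued by the sheaf axiom --- which is exactly the device your part (c) correctly uses, but genuinely infinite limit processes are unavailable. For comparison: the paper offers no proof at all; Lemma 15 is a citation of \cite{LSS} I.4.6, I.4.11--13, I.4.15, whose proofs transport by replacing ``semialgebraic'' with ``definable''. There, (d) and (e) rest on the \emph{definable} Tietze/Urysohn theorems for affine definable sets --- where the distance function $d(\cdot,A)$ to a closed definable set is itself definable --- patched together by a partition of unity subordinate to a locally finite covering, with the exact preimage conditions of (e) checked pointwise on the locally finite sum; no limits occur.

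Two secondary gaps. In (a), ``paracompactness refines this to a locally finite such covering'' is unjustified and in fact false for arbitrary (non-admissible) coverings: on the affine line over $\mathbb{R}_{\mathcal{S}}$, the covering by all bounded open intervals admits no locally finite refinement, since $M$ itself is a definable open subset meeting every member of any refinement, so local finiteness would force the refinement to be finite. The repair is to bypass the $W_\mu$ altogether: take the locally finite covering $(M_\alpha)$ by open definable sets supplied directly by the definition of paracompactness, note that a definable $A$ is small and hence meets only finitely many $M_\alpha$, and use the identity $\overline{A}\cap M_\alpha=\mathrm{cl}_{M_\alpha}(A\cap M_\alpha)$ (valid because $M_\alpha$ is open), each term being definable since $M_\alpha$ is a definable space. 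In (c), ``use normality to get morphisms $\psi_\lambda$'' silently assumes a functional Urysohn lemma in the category: set-theoretic normality does not produce \emph{morphisms}, since the classical dyadic construction yields only a continuous function, not a locally definable one; the $\psi_\lambda$ must be built definably on affine pieces (again via distance functions) and glued --- which is precisely where the definable Urysohn input from the affine theory enters, so your ordering (c) before (d),(e) hides a dependency rather than avoiding one. By contrast, your part (b) is sound as written: the transfinite shrinking is legitimate in the generalized topology because subfamilies of a locally finite family are admissible, so the unions $\bigcup_{\mu<\lambda}V_\mu$ and $\bigcup_{\mu>\lambda}U_\mu$ are indeed open locally definable, and set-theoretic normality (recorded in the paper's Topological Remark) is all that step needs.
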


Each  locally definable space $M$ over $R$ has a natural ``base field extension'' $M(S)$
 over any elementary extension $S$ of $R$ (cf. I.2.10 in \cite{LSS}) and an ``expansion'' $M_{R'}$ to a locally definable space over any o-minimal expansion $R'$ of $R$. 
Analogously, we may speak about a base field extension of a morphism.

The rules of conservation of the main properties under the base field extension are the same as for the locally semialgebraic case:  
\begin{itemize}
\item[a)] the base field extensions of the family of the connected components of a locally definable space $M$ form the family of connected components of $M(S)$ (cf. I.3.22 i) in \cite{LSS});
\item[b)] if $M$ is Hausdorff then: the space $M$ is definable iff $M(S)$ is definable, $M$ is affine definable iff $M(S)$ is affine definable, $M$ is paracompact iff $M(S)$ is paracompact,
$M$ is regular and paracompact iff $M(S)$ is regular and paracompact (cf. B.1 in \cite{LSS}).
\item[c)] if $M$ is regular and paracompact, then:
$M$ is partially complete iff $M(S)$ is partially complete,
$M$ is complete iff $M(S)$ is complete (cf. B.2 in \cite{LSS}).
\end{itemize}

If we expand $R$ to an o-minimal $R'$ then:
\begin{itemize}
\item[a)]  any locally definable space $M$ is  regular over $R$ iff $M_{R'}$ is a regular space over $R'$, since they have the same strong topologies;  
\item[b)] a locally definable space $M$ is connected over $R$ iff $M_{R'}$ is connected over $R'$ (for an affine space a clopen subset of a set definable over $R$ is definable over $R$, generally apply an admissible covering by affine subspaces ``over $R$'');
\item[c)] a locally definable space $M$ is Lindel\"of over $R$ iff $M_{R'}$ is Lindel\"of over $R'$: if $M$ is  Lindel\"of, then $M_{R'}$ is obviously Lindel\"of; if $M_{R'}$ is  Lindel\"of
then each member of a countable admissible covering $\mc{V}$ of $M_{R'}$ by definable open subspaces is covered by a finite union of elements of the admissible covering $\mc{U}$ of $M$ by definable open subspaces that allowed to construct $M_{R'}$. Then $\mc{U}$ has a countable 
subcovering $\mc{U'}$. 
(Up to this moment our proof goes like the proof of
Proposition 2.9 iii) in \cite{ldh}, but they  do not care about admissibility.)
The family $\mc{U''}$ of finite unions of elements of $\mc{U'}$ is a countable coarsening of $\mc{V}$, hence is admissible in $M_{R'}$. Since ``admissible'' means ``locally essentially finite'', $\mc{U''}$ is in particular admissible in $M$. 

\item[d)] if  $M$ is a Hausdorff locally definable space over $R$, then $M$ is paracompact over $R$ iff $M_{R'}$ is paracompact over $R'$: if $M$ is paracompact, then $M_{R'}$ is obviously paracompact; if $M_{R'}$ is paracompact, then we can assume that it is connected. Then $M_{R'}$
is Lindel\"of (cf. I.4.17 in \cite{LSS}) and {taut} (i.e. the closure of a definable set is definable, cf. I.4.6 in \cite{LSS}). Now, by c),
the space $M$ is Lindel\"of, and it is taut by the construction of $M_{R'}$
and considerations of Fundamental Example 1, so $M$ is paracompact (see  I.4.18  in \cite{LSS} and Proposition 2.9 iv) in \cite{ldh}). 
\end{itemize}

\section{Homotopies}
Here basic definitions of homotopy theory are re-introduced.
The unit interval $[0,1]$ of $R$ will be considered as an affine definable space over $R$.

Let $M,N$ be objects of $\spa(R)$  and let $f,g$ be morphisms
 from $M$ to $N$.
A \textbf{homotopy} from $f$ to  $g$ is a morphism $H:M\times
 [0,1]\rightarrow N$ such that
$H(\cdot ,0)=f$ and $H(\cdot ,1)=g$. If $H$ exists, then $f$ and $g$ are called \textbf{homotopic}. If additionally $H(x,t)$ is independent of $t\in [0,1]$ for each $x$ in a subspace $A$, then we say that $f$ and $g$ are \textbf{homotopic relative to} $A$.
A  subspace $A$ of a space  $M$ is called a \textbf{retract} of $M$ if
 there is a morphism
$r:M\rightarrow A$ such that $r|A=id_A$. Such $r$ is called a
 \textbf{retraction}.
A subspace $A$ of $M$ is called a \textbf{strong deformation retract} of $M$ if
there is a homotopy $H:M\times [0,1]\rightarrow M$ such that $H_0$
 is the identity
and $H_1$ is a retraction from $M$ to $A$. Then $H$ is called \textbf{strong deformation retraction}.

A \textbf{system of spaces} over $R$ is any tuple $(M,A_1 ,...,A_k )$
where $M$ is a space over $R$ and $A_1,...,A_k$ are subspaces
 of $M$. A \textbf{closed pair} is a system $(M,A)$ of a space with a closed subspace.
A system $(A_0,A_1,...,A_k)$ is \textbf{decreasing} if $A_{i+1}$ is a subspace of $A_i$ for $i=0,...,k-1$.
A \textbf{morphism} of systems of spaces $f:(M,A_1 ,...,A_k )
\rightarrow (N, B_1 ,...,B_k )$ is a morphism of spaces
 $f:M\rightarrow N$ such that $f(A_i )\subseteq B_i$ for each $i=1,...,k$.
A \textbf{homotopy} between two morphisms of systems of spaces
$f,g$ from $(M,A_1 ,...,A_k )$ to $(N,B_1 ,...,B_k )$ is a morphism
$$ H:(M\times [0,1],A_1 \times [0,1],...,A_k \times[0,1])\rightarrow
 (N,B_1 ,...,B_k )$$
with $H_0 =f$ and $H_1 =g$.
The \textbf{homotopy class} of such a morphism $f$ will be denoted by $[f]$ and
the \textbf{set of all homotopy classes} of morphisms from $(M,A_1 ,...,A_k )$
to $(N,B_1 ,...,B_k )$ by $$ [ (M,A_1 ,...,A_k ), (N,B_1 ,...,B_k ) ] .$$
If $C$ is a closed subspace of $M$, and $h:C\rightarrow N$ is a pregiven morphism such that $h(C \cap A_i ) \subseteq B_i$, then we denote the sets of classes of homotopy relative to $C$ of mappings extending $h$ by
$$[ (M,A_1 ,...,A_k ), (N,B_1 ,...,B_k ) ]^{h} .$$

Let us adopt the notation:
$I=[0,1]$, $\partial I^n =I^n \setminus (0,1)^n$, and
$J^{n-1} = \overline{\partial I^n \setminus (I^{n-1} \times \{ 0\}  )}$.
For every pointed space $(M, x_0 )$ over $R$ and $n\in \mathbb{N}^{*}$
 we define the  \textbf{(absolute) homotopy groups} as sets
$$ \pi_n (M,x_0 )=[ (I^n ,\partial I^n ), (M,x_0 ) ] $$
where the multiplication $[f]\cdot [g]$, for $n\geq 1$, is the homotopy class of
$$ (f * g) (t_1 ,t_2 ,...,t_n )=\left\{
\begin{array}{l}
f (2 t_1 ,t_2 ,...,t_n ), 0\le t_1 \le \frac{1}{2} \\
g (2 t_1 -1,t_2 ,...,t_n ), \frac{1}{2} \le t_1 \le 1.
 \end{array} \right. $$
For $n=0$ we get (only) a set $\pi_0 (M,x_0 )$ of connected components
of $M$ with the base point the connected component of $x_0$.
Also, as in topology,  we define \textbf{relative homotopy groups}
$$\pi_n (M,A,x_0 ) = [(I^n ,\partial I^n ,J^{n-1} ),(M,A,x_0 )].$$

A morphism $f:M\rightarrow N$ is a
\textbf{homotopy equivalence} if there is a morphism $g:N\rightarrow M$ such that $g\circ f$ is homotopic to $id_M$ and $f\circ g$ is homotopic to $id_N$. We call $f:M\rightarrow N$
 a \textbf{weak homotopy equivalence} if $f$  induces bijections in homotopy sets ($\pi_0 (\cdot)$) and group isomorphisms in all homotopy groups ($\pi_n (\cdot)$, $n\geq 1$).
Analogously, we define homotopy equivalences and weak homotopy equivalences for systems of spaces.

The following operations, known from the usual homotopy theory, may not be executable in the category of regular paracompact locally definable spaces over a given $R$.
The \textbf{smash product} of two pointed spaces $M,N$, which  is $M\wedge N = M\times N / M\vee N$, where $M\vee N$ denoted the wedge product of such spaces. 
The \textbf{reduced suspension} $SM$  of $M$, which  is $S^1 \wedge M$.
The \textbf{mapping cylinder} $Z(f)$ of $f:M\rightarrow N$, which  is the space obtained as the quotient of $(M\times [0,1]) \cup N$ by the equivalence relation 
 that identifies  each point of the form $(x,1)$, $x\in M$, with $f(x)$. The \textbf{mapping cone} of $f$, which  is the mapping cylinder of $f$ divided by $M\times \{ 0\} $.
The \textbf{cofiber} $C(f)$ of $f:M\rightarrow N$, which is the ``switched'' mapping cylinder $(([0,1]\times M)\cup_{1\times M,f} N) / \{ 0\} \times M$.

\section{Comparison Theorems for locally definable spaces}
In this section the two Comparison Theorems from \cite{LSS} are extended, and the third is added. The first steps to do this are: embedding in a partially complete space and triangulation.
 
\begin{thm}[embedding into a partially complete space, cf. II.2.1]\label{embed}
Each regular paracompact locally definable space over $R$ is isomorphic to a dense locally definable subset of a partially complete regular paracompact space over $R$.
\end{thm}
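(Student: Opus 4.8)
The plan is to follow the semialgebraic construction of II.2.1, building the ambient space chart by chart and completing each chart. First I would reduce to the connected case. A regular paracompact space is the direct (generalized) topological sum of its connected components, each of which is again regular and paracompact and, being connected, is Lindel\"of. Since direct sums preserve regularity, paracompactness and partial completeness, and since a dense embedding of each summand assembles to a dense embedding of the sum (each summand being open and closed, so that the closure of the sum is the sum of the closures), it suffices to treat one component. So assume $M$ is connected and fix a countable locally finite admissible covering $(U_n)_{n\in\N}$ of $M$ by affine definable open subspaces; this is possible because $M$ is Lindel\"of, and because $M$ is regular every small (hence every open definable) subspace is definable and therefore affine by Robson's theorem.

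Next I would complete each chart. Composing an affine embedding $U_n\hookrightarrow R^{m_n}$ with a definable homeomorphism of $R^{m_n}$ onto the open cube $(-1,1)^{m_n}$, I may assume $U_n$ is a bounded definable subset of $R^{m_n}$; its topological closure $K_n:=\overline{U_n}\subseteq[-1,1]^{m_n}$ is then a closed and bounded, hence complete, definable set in which $U_n$ is dense. Applying the shrinking lemma (Lemma \ref{15}\,b)) twice, I would pass to open locally definable coverings $(V_n)$ and $(W_n)$ of $M$ with $\overline{V_n}\subseteq W_n$ and $\overline{W_n}\subseteq U_n$, so that the relatively compact pieces $\overline{V_n}$ give me control over closures inside each chart.

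The core of the argument is the gluing. I would construct $\widehat M$ by adjoining to $M$ the ``missing'' limit points supplied by the $K_n$: on the set $M\sqcup\bigsqcup_n K_n$ impose the identifications coming from the two inclusions $U_n\subseteq M$ and $U_n\subseteq K_n$, and declare the completed charts to form an admissible covering. Density of $M$ in $\widehat M$ is then immediate, since $U_n$ is dense in $K_n$, and $M$ is a locally definable subset because $M\cap K_n=U_n$ is definable. Partial completeness follows from local finiteness: any closed definable subspace of $\widehat M$ is small, hence covered by finitely many charts $K_n$, in each of which it is a closed subspace of the complete set $K_n$ and so complete, a finite union of complete closed subspaces being complete. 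Paracompactness is inherited because the completed covering is still locally finite, and regularity follows from the regularity of $M$ together with that of each $K_n$, using the partition of unity (Lemma \ref{15}\,c)) and Urysohn's lemma (Lemma \ref{15}\,e)) to separate a point from a closed set that meets several charts.

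I expect the main obstacle to be precisely this gluing step: one must verify that adjoining the chart completions yields a \emph{Hausdorff} and regular space, since a priori two distinct boundary points coming from different charts $K_m,K_n$ could fail to be separated, and one must check that the transition data between the completions over an overlap $U_m\cap U_n$ are genuine morphisms rather than mere set maps. This is exactly where regularity and paracompactness of $M$ enter decisively, and it is here that the shrinkings $\overline{V_n}\subseteq W_n\subseteq\overline{W_n}\subseteq U_n$ are used to confine the boundary identifications to a consistent set of limit points, as in the semialgebraic proof of II.2.1.
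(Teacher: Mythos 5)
Your overall strategy (reduce to the connected Lindel\"of case, take a countable locally finite affine covering, complete each chart by realizing it as a bounded definable set and taking its closure) is indeed the strategy of II.2.1 in \cite{LSS}, which is all the paper itself offers for Theorem \ref{embed}: no proof is reproduced there, the semialgebraic argument being carried over verbatim. But your central gluing step, as written, is wrong, and you have in effect flagged the failure yourself without repairing it. The one-shot pushout of $M\sqcup\bigsqcup_n K_n$ along the two inclusions of $U_n$ is not Hausdorff. Concretely: take $M=(0,\infty)$ with the locally finite covering $U_1=(1,\infty)$, $U_2=(0,\infty)$. Both $K_1$ and $K_2$ acquire a boundary point ``at $+\infty$'', and in your quotient these are two distinct points, each of which is a limit (in the strong topology) of the same points $x\to+\infty$ of $U_1\cap U_2$; they cannot be separated. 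Worse, since you complete the \emph{open} charts $U_n$ rather than closed pieces, $K_1$ also adds a point over $1\in M$ that is non-separable from $1$ itself. Passing to the closed shrinkings $\overline{V_n}$ cures the second defect (a sequence in the closed set $\overline{V_n}$ with a limit in $M$ has its limit in $\overline{V_n}$, so no spurious points over $M$ arise), but not the first: two charts whose overlap is unbounded toward the same ``end'' of $M$ still produce duplicated, non-separable boundary points, and your appeal to ``regularity and paracompactness entering decisively'' names the obstacle without supplying the mechanism that removes it.

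What the proof behind II.2.1 actually does at this juncture is not a simultaneous pushout but a \emph{successive} gluing: one well-orders the (countable) covering and attaches one completed chart at a time along a \emph{closed} subspace of the previously constructed space by a partially proper map, invoking the gluing theorems for regular paracompact spaces (II.1 of \cite{LSS}, ``gluing paracompact spaces''; compare the glueing statement for closed pairs recalled in the present paper, cf. IV.8.6 of \cite{WSS}). Because at each stage the closure of the piece being attached is re-taken \emph{inside the space built so far}, boundary points over a common end are added once and identified automatically, and regularity and paracompactness are preserved stage by stage by the gluing lemmas; the completion is then the inductive limit of this chain. This inductive bookkeeping is precisely what is missing from your sketch, and it is where the shrinkings $\overline{V_n}\subseteq W_n\subseteq\overline{W_n}\subseteq U_n$ are genuinely consumed (they set up the closed pairs along which one glues), not merely ``confining identifications''. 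The remainder of your argument is sound once the construction exists: density of $M$ follows from density of each chart in its completion, $M$ is locally definable in $\widehat M$ since its trace on each completed chart is definable, and partial completeness follows exactly as you say, a closed definable subspace being small, hence covered by finitely many chart completions, and a finite union of complete closed definable pieces being complete.
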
 

We restate the triangulation theorem, keeping the notation from \cite{LSS} to avoid confusion.
 
 \begin{thm}[triangulation, cf. II.4.4]\label{triang}
 Let $M$ be a regular paracompact locally definable space over $R$. For a given locally finite family $\mc{A}$ of locally definable subsets of $M$, there is a simultaneous triangulation $\phi : X\rightarrow M$ of $M$ and $\mc{A}$ (i. e. an isomorphism from the underlying set $X$, considered as a locally definable space, of a strictly locally finite geometric simplicial complex $(X,\Sigma(X))$ to $M$ such that all members of $\mc{A}$ are unions of images of open simplices from $\Sigma(X)$). 
 \end{thm}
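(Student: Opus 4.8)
The plan is to reduce the locally definable assertion to the triangulability of ordinary definable sets, which is available at the level of subsets of $R^n$, and then to patch the local pieces together along an exhaustion. First I would invoke Theorem \ref{embed} to embed $M$ isomorphically onto a dense locally definable subset of a partially complete regular paracompact space $\hat{M}$ over $R$. Enlarging the family by the single member $M$, it suffices to triangulate $\hat{M}$ simultaneously with the enlarged family and then to restrict $\phi$ to the open subcomplex whose open simplices map into $M$; this restriction is again strictly locally finite. Hence I may assume from the outset that $M$ is partially complete, so that every closed definable subspace of $M$ is complete, and therefore (being paracompact and complete) affine definable and thus triangulable as a definable set.

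Next I would reduce to the connected case: the connected components of $M$ form an admissible family and $M$ is their direct generalized topological sum, so they may be triangulated separately. A connected object of $\rplds(R)$ is Lindel\"of, so paracompactness yields a countable locally finite admissible covering by open definable subsets; taking closures (definable by tautness, Lemma \ref{15}a) and finite unions produces an exhaustion $M_1 \subseteq M_2 \subseteq \cdots$ of $M$ by closed definable --- hence complete affine definable --- subspaces with admissible union equal to $M$. Because $\mc{A}$ is locally finite and each $M_n$ is small, only finitely many members of $\mc{A}$ meet $M_n$, and each such member restricts to a definable subset of $M_n$, since a locally definable subset has definable traces on open definable subspaces. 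Thus at every finite stage I face only a definable set carrying finitely many definable subsets.

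The core of the argument is then an induction on $n$. I would first triangulate $M_1$ simultaneously with the finitely many definable traces of $\mc{A}$ on it, and then at each step extend the triangulation already fixed on the closed definable subspace $M_n$ to one of $M_{n+1}$ that remains compatible with the finitely many relevant traces of $\mc{A}$. Passing to the inductive limit of the finite complexes $\Sigma(M_n)$ yields a geometric simplicial complex $(X,\Sigma(X))$ together with $\phi\colon X\to M$; the complex is strictly locally finite because each $M_n$ is triangulated by a finite complex and the exhaustion is locally finite, so any open definable subset of $M$ meets only finitely many closed simplices.

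The main obstacle is precisely the \emph{relative, compatible} extension step at the definable level: prolonging a triangulation already fixed on $M_n$ across $M_{n+1}$ without subdividing it in a way that destroys compatibility with $\mc{A}$. This is exactly the definable-sets-level input that the naive semialgebraic argument does \emph{not} supply over a general o-minimal expansion of a field, and it is here that the o-minimal \emph{expansion case} genuinely differs from RCF; the required extension must be taken from the ``normal triangulations'' of \cite{Ba}, as carried out for the third Comparison Theorem in \cite{BaOt}. A secondary, more bookkeeping difficulty is verifying that the inductive limit $\phi$ is an \emph{isomorphism} of locally definable spaces rather than a mere bijection, and that $\Sigma(X)$ is strictly locally finite in the required sense; both rest on the admissibility of the exhaustion and the local finiteness of $\mc{A}$.
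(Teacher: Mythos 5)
Your proposal follows essentially the same route as the paper, which states the theorem as the direct generalization of II.4.4 in \cite{LSS}: reduction to the partially complete, connected, Lindel\"of case, exhaustion by complete (hence affine) definable subspaces, and inductive compatible extension of triangulations, with the definable-sets-level relative extension step supplied---exactly as the paper's introduction flags for Lemma II.4.3 of \cite{LSS}---by the normal triangulations of \cite{Ba} as exploited in \cite{BaOt}. You have correctly reconstructed both the overall scheme and the one point where the o-minimal expansion case genuinely diverges from the semialgebraic argument.
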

 
In particular, each object of $\rplds(R)$ is locally (pathwise) connected  and even locally contractible.

As an illustration of the methods available by triangulation, the following Bertini or Lefschetz type theorem (known from complex algebraic and analytic geometry) is proven. (See \cite{ap} for a topological version. Here the difficulty lies in the possibility that two different points are of an infinitesimal distance, and that a curve has 
an infinitely large velocity.)

A subspace $\Delta$ of a locally definable space $Y$  \textbf{nowhere disconnects} $Y$ 
if for each connected open neighborhood $W$ of any $y\in Y$ there is an open neighborhood $U\subseteq W$ of $y$ such that $U\setminus \Delta$ is connected.

A morphism $p:E\rightarrow B$ in \textbf{LDS($R$)} is a \textbf{branched covering} if there is a closed, nowhere dense \textbf{exceptional subspace} $\Delta\subseteq B$ such that 
$p|_{p^{-1}(B\setminus \Delta)}:p^{-1}(B\setminus \Delta)
\rightarrow B\setminus \Delta$ is a \textbf{covering mapping} (this means: there is an admissible covering of $B\setminus \Delta$  by open subspaces, each of them well covered, analogically to the topological setting).  
 If each of the \textbf{regular points} $b\in B\setminus \Delta$ of the branched covering $p:E\to B$ have the fiber of the same cardinality, then this cardinality is
 called the \textbf{degree} of a branched covering $p:E\to B$.

\begin{thm}[cf. Thm. 1 in \cite{ap}] \label{bertini}
Let $Y$ be a simply connected (this assumes connected) object of $\rplds(R)$, $Z$ be a connected, paracompact locally definable manifold over $R$ of dimension at least 2, and $\pi :Y\times Z\to Y$ the canonical projection.

Assume that $V\subset Y\times Z$ is a closed subspace such that the restriction $\pi_V :V\to Y$ is a branched covering of finite degree and an  exceptional set $\Delta$ of this branched covering nowhere disconnects $Y$. Put $X=(Y\times Z)\setminus V$, and $L=\{ p\} \times Z$, for some $p\in Y\setminus \Delta$.

If there is a morphism of locally definable spaces $h:Y\to Z$ over $R$ with the graph contained in $X$, then the inclusion $i:L\setminus V \to X$ induces an epimorphism in the fundamental
groups $i_{*}: \pi_1 (L\setminus V)\to \pi_1 (X)$. 
\end{thm}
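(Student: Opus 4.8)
The plan is to reduce the statement to the long exact homotopy sequence of a fibration, exploiting the global section provided by $h$ together with the simple connectivity of $Y$ to annihilate the contribution of the base. Write $Y^{\circ}=Y\setminus\Delta$ and $X^{\circ}=X\cap\pi^{-1}(Y^{\circ})$, and take as base point $x_0=(p,h(p))\in L\setminus V$; note that $s:=(\mathrm{id},h):Y\to Y\times Z$ lands in $X$ and is a section of $\pi$ with $s(p)=x_0$. Since $p\notin\Delta$ is a regular point, the fibre $\pi^{-1}(p)\cap X=L\setminus V$ is the generic fibre $F:=Z\setminus\{d\text{ points}\}$, where $d$ is the degree of $\pi_V$.

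First I would establish that $\pi:X^{\circ}\to Y^{\circ}$ is a locally trivial fibre bundle with fibre $F$. Over $Y^{\circ}$ the branched covering $\pi_V$ restricts to an unramified covering of degree $d$, so over a small definable contractible $U\subseteq Y^{\circ}$ it splits into $d$ disjoint sheets $\sigma_1,\dots,\sigma_d:U\to Z$, and $X^{\circ}\cap\pi^{-1}(U)=\{(y,z):z\neq\sigma_i(y)\text{ for all }i\}$. A definable trivialisation is then produced by a definable ambient isotopy of $Z$, obtained after a simultaneous triangulation as in Theorem \ref{triang}, carrying the moving configuration $(\sigma_1(y),\dots,\sigma_d(y))$ to a fixed one; here $\dim Z\ge 2$ guarantees that $F$ is connected, so that $\pi_0(F)$ is trivial. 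As the base is paracompact, this bundle is a fibration, and from its exact homotopy sequence together with the splitting furnished by $s$ one reads off that $\pi_1(X^{\circ},x_0)$ is generated by $i_*\pi_1(F,x_0)$ and $s_*\pi_1(Y^{\circ},p)$.

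Next I would show that the inclusion $X^{\circ}\hookrightarrow X$ induces a surjection $\pi_1(X^{\circ},x_0)\to\pi_1(X,x_0)$. This is where the hypothesis that $\Delta$ nowhere disconnects $Y$ enters: after triangulating $X$ compatibly with the closed nowhere dense locus $\pi^{-1}(\Delta)\cap X$, a loop in $X$ is homotoped off $\pi^{-1}(\Delta)$ by detouring around $\Delta$ inside the connected neighbourhoods $U\setminus\Delta$ supplied by the nowhere-disconnecting property, using the room in the fibre direction (again $\dim Z\ge 2$) to keep the detour inside $X$. Hence $\pi_1(X,x_0)$ is generated by the images of $\pi_1(F,x_0)$ and of $s_*\pi_1(Y^{\circ},p)$.

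Finally, the section contribution dies in $X$: the section $s$ is defined on all of $Y$, and $Y$ is simply connected, so for any loop $\alpha$ in $Y^{\circ}$ the loop $s\circ\alpha$ bounds inside $s(Y)\subseteq X$; thus $s_*\pi_1(Y^{\circ},p)$ maps to the trivial subgroup of $\pi_1(X,x_0)$. Combining the three steps, $\pi_1(X,x_0)$ is generated by the image of $\pi_1(F,x_0)=\pi_1(L\setminus V,x_0)$, which is precisely the assertion that $i_*$ is an epimorphism. I expect the main obstacle to be the two geometric inputs carried out in the locally definable category, namely the definable local triviality of $\pi$ over $Y^{\circ}$ (a definable isotopy-extension argument routed through triangulation) and the general-position push-off of loops from $\pi^{-1}(\Delta)$ via the nowhere-disconnecting hypothesis; once these are in place, the homotopy-exact-sequence bookkeeping is purely formal.
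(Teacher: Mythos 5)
Your overall route is the same as the paper's: its proof also passes through $X\setminus(\Delta\times Z)$ (your $X^{\circ}$), handles the unbranched locus by straightening the $d$ moving sheets of $V$, pushes loops off $\Delta\times Z$ by a triangulation-based general-position argument driven by the nowhere-disconnecting hypothesis, and uses the section $s=(\mathrm{id},h)$ together with the simple connectivity of $Y$ to dispose of the base contribution. The one genuine gap in your write-up is where you assert that $\pi:X^{\circ}\to Y^{\circ}$ is a definable locally trivial bundle (trivialized over arbitrary contractible definable opens by an ambient isotopy), that a bundle over a paracompact base is a fibration, and that the long exact homotopy sequence then applies: these are three topological black boxes, none of which is available off the shelf in $\rplds(R)$, and none of which the paper establishes. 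The paper's substitute is precisely Lemma \ref{straight}: a straightening isomorphism of $[0,1]\times Z$ over the \emph{interval}, proved definably by induction on the number of sheets, covering by balls, and the o-minimal finiteness of transitions between regions. Pulled back along an individual loop (and along homotopies of loops), this lemma delivers exactly the path- and homotopy-lifting you need at the $\pi_{1}$ level, with no bundle structure and no Hurewicz uniformization theorem; you should reroute your first step through it.

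On the bookkeeping, your organization is finer than the paper's sketch and in fact safer. The paper's Step 1 asserts outright that $j_{*}:\pi_{1}(L\setminus V)\to\pi_{1}(X\setminus(\Delta\times Z))$ is an epimorphism (citing Part 1 of \cite{ap}); but since $s$ splits $\pi_{*}$ over $Y^{\circ}$ while $\mathrm{im}(j_{*})\subseteq\ker\pi_{*}$, fiber classes alone cannot surject whenever $\pi_{1}(Y\setminus\Delta)\neq 1$ (which the hypotheses allow, e.g.\ $\Delta$ a point in a plane), so that claim must be read modulo homotopies in $X$ --- which is exactly what your three-step version makes precise: $\pi_{1}(X^{\circ})$ is generated by fiber classes and $s_{*}\pi_{1}(Y^{\circ})$, and the section classes die only after inclusion into $X$, where the nullhomotopy in the simply connected $Y$ is pushed through $s$ into $s(Y)\subseteq X$. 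Your second step matches the paper's Step 2 in substance; note only that the paper derives the push-off from the fact that $(\Delta\times Z)\cap X$ nowhere disconnects $X$ and makes it quantitative (triangulation over $\mb{Q}$ via Theorem \ref{triang}, a rational lower bound on the distance to $V$, a bound on the velocity, and a Lebesgue-number argument in barycentric coordinates), which is the care actually required over a non-archimedean $R$, where points can be infinitesimally close and curves can have infinite velocity; your appeal to ``general position'' and ``room in the fibre direction'' should be implemented in that quantitative PL form.
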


\begin{lem}[straightening property, cf. Lemma 3 in \cite{ap}] \label{straight}
Every paracompact locally definable manifold  $M$  over $R$  has
the following straightening property:

For each set $J \subset [0,1]\times M$ such that the natural projection
$\beta :[0,1]\times M\rightarrow [0,1]$ restricted to $J$ is a covering mapping of finite
degree, there exists an isomorphism, called the \textbf{straightening isomorphism}, $\tau :[0,1]\times M\rightarrow [0,1]\times M$
which satisfies the following three conditions:

2.1) $\beta \circ \tau = \beta,$

2.2) $\tau \mid \{ 0\} \times M = id,$

2.3) $\tau (J) = [0,1]\times (\alpha (J\cap (\{ 0\} \times M))),$ where
 $\alpha :[0,1]\times M\rightarrow M$ is the natural projection.
\end{lem}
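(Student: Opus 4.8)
The plan is to reduce the statement to a definable ambient-isotopy (``point-pushing'') construction. Condition 2.1 forces $\tau$ to be fibred over $[0,1]$, that is $\tau(t,x)=(t,\phi_t(x))$ for a family of isomorphisms $\phi_t$ of $M$; condition 2.2 then reads $\phi_0=\mathrm{id}$, and 2.3 becomes the requirement $\phi_t(\gamma_i(t))=\gamma_i(0)$ for all $t$ and $i$, once the shape of $J$ is known. To find that shape I would first trivialise the covering $\beta|_J\colon J\to[0,1]$: the interval $[0,1]$ is convex, hence definably contractible and so (definably) simply connected, and a covering of finite degree $d$ over it is trivial. Thus $J=\bigsqcup_{i=1}^{d}\Gamma_i$ is a disjoint union of $d$ sections of $\beta$, each $\Gamma_i$ being the graph of a morphism $\gamma_i\colon[0,1]\to M$, with pairwise disjoint graphs (so the points $\gamma_1(t),\dots,\gamma_d(t)$ are pairwise distinct for every $t$). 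The problem is now exactly to build a definable ambient isotopy of $M$, starting at the identity, that carries the moving finite set $\{\gamma_i(t)\}_i$ onto the fixed set $\{\gamma_i(0)\}_i$.

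Next I would localise and cut the tubes apart. Being a finite covering of the complete interval $[0,1]$, the space $J$ is complete, hence small, so $\bigcup_i\gamma_i([0,1])$ is a small subset of $M$ and is contained in a definable open $M_0$; the isotopy will be supported inside $M_0$ and equal to the identity on $M\setminus M_0$, so that it glues to a global morphism and one may work as if $M$ were a definable manifold. The graphs $\Gamma_1,\dots,\Gamma_d$ are pairwise disjoint closed subspaces of $[0,1]\times M$, which is regular and paracompact (cartesian products preserve both), hence normal; by normality and induction on $d$ I can separate them by pairwise disjoint open tubes $T_1,\dots,T_d$. Since the $T_i$ are disjoint in the product, at each fixed time their slices are disjoint, so pushes supported in different tubes never interfere. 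Using the locally finite atlas of $M$ by definable balls together with completeness of $[0,1]$, I subdivide $0=s_0<\dots<s_m=1$ and choose, for each $i,j$, a chart isomorphic to a ball $B_{i,j}\subseteq R^n$ with $\{t\}\times B_{i,j}\subseteq T_i$ and $\gamma_i([s_{j-1},s_j])$ contained in a concentric smaller ball of $B_{i,j}$.

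The local move is an explicit definable point-push. In the unit ball I use the map $f_p(x)=x+(1-|x|)p$, which fixes the boundary sphere, sends the centre to $p$, and depends definably on $p$; one checks it is a definable homeomorphism. Then $g_{t}:=f_{\gamma_i(s_{j-1})}\circ f_{\gamma_i(t)}^{-1}$ drags $\gamma_i(t)$ to $\gamma_i(s_{j-1})$, equals the identity at $t=s_{j-1}$ and on the boundary, and is a morphism in $t$ (cut off to be the identity outside the smaller concentric ball, so extension by the identity is again a morphism). Telescoping these local pushes over consecutive subintervals --- for $t\in[s_{j-1},s_j]$ composing the partial push that takes $\gamma_i(t)$ to $\gamma_i(s_{j-1})$ with the fixed homeomorphism already taking $\gamma_i(s_{j-1})$ back to $\gamma_i(0)$ --- produces inside each tube $T_i$ a fibred family with the required endpoint behaviour and the identity at $t=0$, the breakpoints matching by construction. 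Performing all $d$ tubes simultaneously (disjoint supports) and extending by the identity on $M\setminus M_0$ yields $\tau(t,x)=(t,\phi_t(x))$. Finally I verify 2.1 (fibred), 2.2 ($\phi_0=\mathrm{id}$), and 2.3 ($\phi_t(\gamma_i(t))=\gamma_i(0)$, whence $\tau(J)=[0,1]\times\{\gamma_i(0)\}_i=[0,1]\times\alpha(J\cap(\{0\}\times M))$); the case $\dim M=0$ is trivial, since then each $\gamma_i$ is constant and $\tau=\mathrm{id}$ works.

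The main obstacle is the definable, flow-free construction of the isotopy: over a general real closed $R$ one cannot integrate a time-dependent vector field, and in the non-archimedean case one cannot make displacements ``small'' in a metric sense to linearise, so the push must be given by an explicit definable formula and shown to be an isomorphism of locally definable spaces. The two facts that make the patching go through are the smallness of $J$ (allowing the reduction to a definable chart picture and the extension by the identity) and the disjointness of the sheets together with normality of the product (allowing pairwise disjoint tubes, so that the $d$ simultaneous pushes commute and the whole family stays a single morphism fibred over $[0,1]$).
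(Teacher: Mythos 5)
There is a genuine gap, and it sits exactly where you put the main load: the \emph{simultaneous} straightening of all $d$ sheets by pushes with pairwise disjoint supports. Disjoint open tubes $T_i$ around the graphs $\Gamma_i$ do exist by normality, but they cannot do what you ask of them. Your telescoped push for sheet $i$ has support spread over the whole past trajectory of $\gamma_i$ (the union of the balls $B_{i,1},\dots,B_{i,j}$) and must also reach the target point $\gamma_i(0)$; for non-interference this enlarged region would have to avoid the positions and targets of all other sheets, and in general the sets $\Gamma_i\cup([0,1]\times\{\gamma_i(0)\})$ for different $i$ are not even pairwise disjoint. Concretely, take $M=R$ (a legitimate paracompact locally definable manifold for this lemma) and let $J$ be the union of the graphs of $\gamma_1(t)=t$ and $\gamma_2(t)=t+\varepsilon$ with $\varepsilon>0$ small. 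In your scheme, the push straightening sheet $2$ at time $t$ must leave sheet $1$'s data untouched, i.e.\ fix both $\gamma_1(t)=t$ and the already-straightened point $\gamma_1(0)=0$, while sending $\gamma_2(t)=t+\varepsilon$ to $\gamma_2(0)=\varepsilon$. For $t>\varepsilon$ no homeomorphism of the line does this: fixing two points forces monotone increase, hence $(t,+\infty)$ is preserved, but $t+\varepsilon>t$ while $\varepsilon<t$. Note this already fails over $\mathbb{R}$ with a small real $\varepsilon$, so the obstruction is not a non-archimedean artifact; in addition, over non-archimedean $R$ with infinitesimal $\varepsilon$ your finite subdivision step is unobtainable in this example, since a ball $B_{2,j}$ containing $\gamma_2([s_{j-1},s_j])$ and avoiding $\gamma_1([s_{j-1},s_j])$ forces $s_j-s_{j-1}<\varepsilon$, and finitely many such intervals cannot cover $[0,1]$.

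The paper's proof avoids this by straightening \emph{sequentially}, by induction on the number $n$ of graphs, letting each new push act on the configuration already moved by the previous ones rather than demanding disjoint supports. After the first $n-1$ graphs are straightened to constant points $p_1,\dots,p_{n-1}$, one renormalizes by an ambient isomorphism so that these points are at non-infinitesimal mutual distances and so that $\gamma_n$ stays outside closed balls of radius larger than some rational number around them; the remainder of the ball is covered by finitely many regions each isomorphic to the open unit ball, definability of $\gamma_n$ gives only finitely many transitions between regions, and the last curve is straightened region by region (using the $n=1$ case, whose push extends by the identity from the bounding sphere) and glued as in Lemma 3 of \cite{ap}. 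Several of your ingredients do reappear there --- the decomposition of $J$ into finitely many graphs, smallness of $J$ and reduction to finitely many ball-like charts, an explicit flow-free definable push, and telescoping over a finite subdivision --- but the essential missing ideas in your proposal are the order-sensitive inductive composition and the renormalization step taming infinitesimal distances; without them the parallel disjoint-tube construction collapses.
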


\begin{proof}
\textit{Special case.} Assume $M$ is a unit open ball in $R^m$.
The set $J$ is a finite union of  graphs of definable continuous mappings $\gamma_i :[0,1]_R\to M$ $(i=1,...,n)$.
We apply induction on the number $n$ of these graphs.

If $n=1$ then obviously the straightening exists (compare Lemma 2 in \cite{ap}), and the isomorphism may be chosen to extend continuously
to the identity on the unit sphere.

If $n>1$ and the lemma is true for $n-1$, then we can assume that the first $n-1$ graphs (of the functions $\gamma_1,...,\gamma_{n-1}$) are already straightened and that the  distances between images of the corresponding mappings (points $p_1,...,p_{n-1}$) are not infinitesimals. 
Moreover, since the distance from the value $\gamma_n(t)$ of the last function $\gamma_n$ to any of the distinguished points has a positive lower bound,
we can assume $\gamma_n(t)$
is always outside some closed balls centered at $p_i$'s with radius larger than some rational number. Now, we can cover the rest of the unit ball by finitely many regions that are each isomorphic to the open unit ball. 
Since the last function is definable, there is only finitely many transitions from one region to another when $t\in [0,1]_R$. We have the straightening inside each of the regions.
By glueing such straightenings as in the proof of Lemma 3 of \cite{ap}, we get  the straightening of the whole $n$-th mapping.
Again the straightening extends continuously to the identity on the unit sphere.

\textit{General case.}
Again $J$ is a finite union of graphs of definable functions on $[0,1]_R$ (by arguments similar to those of the usual topological context).
Since $J$ is definable, it is contained in a finite union of open sets each isomorphic to the open unit ball in $R^m$. The thesis of the lemma extends by  arguments similar to these of the special case.
\end{proof}

\begin{proof}[Proof of  Theorem \ref{bertini}]
Clearly, $X$ is a connected and locally simply connected space.
Let $ j:L\setminus V\hookrightarrow X\setminus
(\Delta \times Z)$ and $k:X\setminus (\Delta \times Z)\hookrightarrow X$
be the inclusions. Then the proof falls naturally into two parts.

\textit{Step 1.}
{\it The induced mapping $j_{*} :\pi_{1} (L\setminus V)\rightarrow \pi_{1}
(X\setminus (\Delta \times Z))$ is an epimorphism.} This step is analogous to Part 1 of the proof of Theorem 1 in \cite{ap}.
Here Lemma \ref{straight} is used.




\textit{Step 2.}
{\it The mapping $k_{*} :\pi_{1} (X\setminus (\Delta \times Z))\rightarrow
\pi_{1} (X)$ induced by $k$ is an epimorphism.}

 Notice that $(\Delta \times Z)\cap X$ nowhere disconnects $X$.
Let $u=(f,g)$ be a loop in $X$ at $(p,h(p))$. The set $\im(u)$ has an affine open neighborhood $W$.

We use a (locally finite) triangulation ``over $\mb{Q}$'' of $Y\times Z$ (that is an isomorphism $\phi : K\to Y\times Z$ for some strictly locally finite, not necessary closed, simplicial complex $(K,\Sigma(K))$, following the notation of \cite{LSS}), compatible with $\im(u),\Delta\times Z,V,L,h,W$.

There is  $\varepsilon \in \mb{Q}$ such that the ``distance'' from $\phi^{-1}(\im(u))$ to $\phi^{-1}(V\cap W)$ in some ambient affine space is at least $\varepsilon$.  
Moreover, the ``velocity'' of $\phi^{-1}\circ u$ (existing almost everywhere) is bounded from above by some rational number. Since now all the considered sets and functions (appearing in the context of $K$) are piecewise linear over $\mb{Q}$, the Lebesgue number argument is available.
By the use of the ``distance'' function in the ambient affine space and the barycentric coordinates for the chosen triangulation, we find a loop $\tilde{u}=(\tilde{f},\tilde{g})$ homotopic to $u$ 
rel $\{ 0,1\}$ with image in $X\setminus (\Delta \times Z)$.
\end{proof}

 The following facts and theorems, whose proofs use the machinery of
\textit{good triangulations},
 are straightforward generalizations of the corresponding
semialgebraic versions from \cite{LSS}:

\begin{f}[canonical neighborhood retraction, cf.  III.1.1]
Let $M$ be an object of $\rplds(R)$ and $A$ a
 closed subspace. There is an open neighborhood
$U$ (in particular a subspace) of $A$ and a strong deformation retraction
$$H:\overline{U} \times [0,1]\rightarrow \overline{U} $$
from $\overline{U}$ to $A$ such that the restriction $H|U\times [0,1]$
 is a strong deformation retraction from $U$ to $A$.
\end{f}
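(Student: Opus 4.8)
The plan is to mimic the classical topological construction of a canonical neighborhood retraction, but carried out inside a good triangulation so that everything stays definable and admissible. First I would invoke the triangulation theorem (Theorem \ref{triang}) to replace $M$ by a strictly locally finite geometric simplicial complex $(X,\Sigma(X))$, simultaneously triangulating the closed subspace $A$; since $A$ is closed, it becomes a \emph{subcomplex}, i.e.\ a union of closed simplices. This is the crucial reduction: on a simplicial complex the natural candidate for $U$ is the open star of $A$, and there is a canonical way to deformation-retract a neighborhood of a subcomplex onto it.

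Next I would construct the retraction combinatorially. For each simplex $\sigma$ meeting $A$, write its points in barycentric coordinates split according to the vertices lying in $A$ and those not in $A$; the standard formula that linearly pushes the ``non-$A$ part'' of the barycentric coordinates to zero (rescaling the $A$-coordinates to sum to $1$) gives a piecewise-linear retraction onto $A$, and the straight-line homotopy in barycentric coordinates realizes it as a strong deformation retraction. These formulas are defined over $\mb{Q}$ and patch consistently across faces, so they assemble into a morphism of locally definable spaces on the relevant region. I would take $U$ to be (the image under $\phi$ of) an open neighborhood of $A$ consisting of the open stars, arranged so that its closure $\overline{U}$ is the corresponding closed-star neighborhood; tautness (Lemma \ref{15}a) guarantees $\overline{U}$ is again locally definable, and local finiteness of $\Sigma(X)$ guarantees the whole construction is admissible and yields morphisms rather than merely set maps. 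The homotopy restricted to $U$ stays a strong deformation retraction onto $A$ by the same formula, giving the two compatible retractions required.

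The main obstacle I expect is \emph{admissibility and regularity of the neighborhood}, not the pointwise formula. In the generalized-topology setting one cannot simply take an arbitrary union of small neighborhoods and call it open: the family of open stars must form an admissible covering of $U$, and $\overline{U}$ must be a genuine closed subspace with a well-defined closure operator. Here is exactly where the $\rplds(R)$ hypotheses pay off — paracompactness gives the locally finite (hence admissible) covering needed for the stars to glue, regularity (with the partition of unity from Lemma \ref{15}c) lets one separate $A$ from the complement of $U$ and thereby control $\overline{U}$, and tautness ensures $\overline{U}$ is definable-by-pieces so that $H:\overline{U}\times[0,1]\to\overline{U}$ is a legitimate morphism. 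I would therefore spend the care precisely on verifying that the barycentric homotopy is continuous and definable at the boundary between the closed-star region and the rest of $M$, and that it maps $\overline{U}$ into itself rather than escaping.

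In short: triangulate (rel $A$), retract each simplex onto its $A$-face by the barycentric straight-line homotopy, take $U$ to be the open star neighborhood of $A$ and $\overline{U}$ its closed star, and use paracompactness, regularity, and tautness to certify that the resulting $H$ is an honest morphism of locally definable spaces. The construction is the evident one; the real content, as throughout this theory, is checking that ``evident'' topological neighborhoods are admissible and that the retraction respects the generalized topology.
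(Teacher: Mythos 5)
Your overall route --- triangulate the pair and build the retraction simplexwise in barycentric coordinates --- is the route the paper intends (it gives no independent proof, deferring to the semialgebraic argument of Delfs--Knebusch III.1.1, whose ``machinery of good triangulations'' it invokes). But two concrete steps of your construction fail as stated, and they are precisely what the word ``good'' is there to fix. First, a simultaneous triangulation only makes $A$ a closed \emph{subcomplex}, not a \emph{full} one, and for a non-full subcomplex the open star is the wrong neighborhood: take $X$ a closed $2$-simplex and $A=\partial X$; every open simplex of $X$ has closure meeting $A$, so your $U$ is all of $X$, and the disk does not retract onto its boundary circle at all. One must first pass to a barycentric subdivision so that $A$ becomes full; only then is the open star equal to $\{f_A>0\}$, where $f_A$ is the sum of the barycentric coordinate functions of the vertices of $A$, and only then does your rescaling formula define a retraction on it.

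Second, even with $A$ full, your choice of $U=$ open star, $\overline{U}=$ closed star breaks the half of the statement concerning $\overline{U}$: the closed star contains the link of $A$, i.e.\ points with $f_A=0$, where the rescaling formula divides by zero --- and in fact $\overline{U}$ need not retract onto $A$ at all. In the subdivided disk example, $\{f_A>0\}$ is $X$ minus the barycenter of the $2$-simplex, which retracts to the boundary, but its closure is the whole disk, which does not. The standard repair, and the one in the semialgebraic source, is to shrink: set $U=f_A^{-1}((1/2,1])$, so that $\overline{U}=f_A^{-1}([1/2,1])$ stays inside $\{f_A>0\}$, where the canonical join-line homotopy is everywhere defined, is piecewise linear over $\mb{Q}$, patches across faces by local finiteness, and restricts from $\overline{U}$ to $U$ as required. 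Your admissibility discussion (paracompactness, tautness, locally finite stars) is on target and is indeed where the $\rplds(R)$ hypotheses enter; note moreover that $f_A$ is itself a morphism because the triangulation is strictly locally finite, so admissibility of the level-set neighborhood comes for free, without a separate partition-of-unity argument.
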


\begin{f}[extension of morphisms, cf. III.1.2]
Let $M$ be an object of $\rplds(R)$,
 $A$ a closed subspace, and $U$ a neighborhood of $A$
from the previous theorem. Any morphism $f:A\rightarrow Z$ into a
 regular paracompact locally definable space  extends to a morphism
$\tilde{f} :\overline {U} \rightarrow Z$.
Moreover, if $\tilde{f}_1$, $\tilde{f}_2$ are extensions of $f$ to
 $\overline{U}$,
then they are homotopic in $\overline{U}$ relative to $A$.
\end{f}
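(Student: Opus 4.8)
The plan is to exploit the strong deformation retraction $H \colon \overline{U} \times [0,1] \to \overline{U}$ of $\overline{U}$ onto $A$ supplied by the previous Fact, together with two structural facts already available: that compositions and corestrictions of morphisms are morphisms, and that cartesian products preserve regularity and paracompactness in $\rplds(R)$. The whole argument is the usual topological one, transported into the locally definable category, so the real work is only to certify that each map produced is a morphism.

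First I would build the extension. Write $r = H(\cdot, 1) \colon \overline{U} \to \overline{U}$ for the retraction. Its image lies in the closed subspace $A$ and $r|_A = \mathrm{id}_A$, so corestriction yields a morphism $r \colon \overline{U} \to A$. Setting $\tilde{f} = f \circ r \colon \overline{U} \to Z$ then gives a morphism (a composition of the morphisms $r$ and $f$), and for $x \in A$ we get $\tilde{f}(x) = f(r(x)) = f(x)$, so $\tilde{f}$ extends $f$. This settles existence.

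For the homotopy uniqueness, let $\tilde{f}_1, \tilde{f}_2 \colon \overline{U} \to Z$ be any two extensions of $f$. For $i = 1, 2$ I would put $G_i = \tilde{f}_i \circ H \colon \overline{U} \times [0,1] \to Z$; each $G_i$ is a morphism, since $\overline{U} \times [0,1]$ is again an object of $\rplds(R)$, $H$ is a morphism on it, and $\tilde{f}_i$ is a morphism. From $H(\cdot, 0) = \mathrm{id}_{\overline{U}}$ and $H(\cdot, 1) = r$ we get $G_i(\cdot, 0) = \tilde{f}_i$ and $G_i(\cdot, 1) = \tilde{f}_i \circ r = f \circ r$, the last equality because $r$ maps into $A$ where $\tilde{f}_i$ agrees with $f$. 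Because $H$ is a \emph{strong} deformation retraction, $H(x, t) = x$ for all $x \in A$ and all $t$, so $G_i(x, t) = \tilde{f}_i(x) = f(x)$ is independent of $t$ on $A$; hence $G_i$ is a homotopy relative to $A$ from $\tilde{f}_i$ to the common map $f \circ r$. Therefore $\tilde{f}_1 \simeq f \circ r \simeq \tilde{f}_2$ relative to $A$, and transitivity of ``homotopic relative to $A$'' (via the usual concatenation along $[0,1] = [0,\tfrac{1}{2}] \cup [\tfrac{1}{2}, 1]$, which glues morphisms along a closed definable subspace) gives $\tilde{f}_1 \simeq \tilde{f}_2$ relative to $A$.

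The geometric heart of the statement is entirely carried by the retraction $H$ from the previous Fact, so I do not expect a substantive obstacle. The one step deserving care, and the place where the locally definable setting differs from the topological one, is purely the morphism bookkeeping: that the corestriction $r \colon \overline{U} \to A$ is a morphism, that $\overline{U} \times [0,1]$ is a legitimate object of $\rplds(R)$ on which $G_i$ can be formed, and that concatenating the two homotopies along the closed halves of $[0,1]$ again produces a morphism. Each of these is guaranteed by the earlier results (preservation of regularity and paracompactness under products, closure of morphisms under composition and corestriction, and the sheaf/admissibility behaviour of the function sheaf under gluing along closed subspaces), so the argument reduces to routine verification once $H$ is in hand.
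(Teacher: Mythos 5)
Your proposal is correct and is essentially the paper's own route: the paper states this Fact as the direct analogue of \cite{LSS} III.1.2, whose proof is exactly your argument --- extend by $\tilde{f}=f\circ r$ with $r=H(\cdot,1)$ from the canonical neighborhood retraction, and compare any two extensions by composing them with $H$, so the only work is the morphism bookkeeping you describe. One small point worth flagging: your rel-$A$ conclusion uses $H(x,t)=x$ for all $x\in A$ and $t\in[0,1]$, which the paper's loosely worded definition of a strong deformation retraction does not literally state, but which the canonical retraction of the previous Fact (as in \cite{LSS} III.1.1) does satisfy.
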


\begin{f}[homotopy extension property, cf.  III.1.4]
Let $M$ be an object of $\rplds(R)$.
If $A$ is a closed subspace of $M$, then $(A\times [0,1])\cup (M\times \{ 0\}
 )$ is a strong deformation retract of $M\times [0,1]$.
In particular, the pair $(M,A)$ has the following Homotopy Extension Property:

for each morphism $g:M\rightarrow Z$ into a regular
 paracompact locally definable space
$Z$ and a homotopy $F: A\times [0,1] \rightarrow Z$
with $F_0 =g|A$ there exists a homotopy $G:M\times [0,1] \rightarrow Z$
 with $G_0 =g$ and $G|A\times [0,1] =F$.
\end{f}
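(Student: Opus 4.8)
The plan is to reduce the homotopy extension property to the first assertion — that $W:=(A\times[0,1])\cup(M\times\{0\})$ is a strong deformation retract of $M\times[0,1]$ — and to build the retraction by an o-minimal version of the classical ``NDR representation'' of a cofibration, using the canonical neighborhood retraction and Urysohn's lemma already available.

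First I would invoke the Fact on canonical neighborhood retraction to obtain an open neighborhood $U\supseteq A$ together with a strong deformation retraction $H:\overline U\times[0,1]\to\overline U$ of $\overline U$ onto $A$ (so $H_0=\mathrm{id}$, $H_1(\overline U)\subseteq A$, and $H_t|_A=\mathrm{id}$). Since $A$ and $M\setminus U$ are disjoint closed locally definable subsets, part e) of Lemma \ref{15} (Urysohn) yields a morphism $\phi:M\to[0,1]$ with $\phi^{-1}(0)=A$ and $\phi^{-1}(1)=M\setminus U$. Fix a rational $c\in(0,1)$ and a piecewise-linear definable taper $\mu:[0,1]\to[0,1]$ with $\mu\equiv 1$ on $[0,c]$ decreasing linearly to $\mu(1)=0$. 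I then extend $H$ to a global deformation $\widetilde H:M\times[0,1]\to M$ by $\widetilde H(x,t)=H(x,\mu(\phi(x))\,t)$ for $x\in\overline U$ and $\widetilde H(x,t)=x$ for $x\in M\setminus U$; the two formulas agree on $\partial U=\overline U\cap(M\setminus U)$ because there $\phi=1$ and $\mu(1)=0$, so $H(x,0)=x$. This $\widetilde H$ satisfies $\widetilde H_0=\mathrm{id}$, $\widetilde H_t|_A=\mathrm{id}$, and $\widetilde H_1(x)\in A$ whenever $\phi(x)\le c$. Setting $u=\min(\phi/c,1):M\to[0,1]$, a morphism with $u^{-1}(0)=A$ and $u(x)<1\Rightarrow\widetilde H_1(x)\in A$, the pair $(u,\widetilde H)$ is the desired NDR representation.

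With $(u,\widetilde H)$ in hand I would write the retraction $r:M\times[0,1]\to W$ by the standard square formula
\[ r(x,s)=\left\{\begin{array}{ll} \big(\widetilde H(x,\,s/u(x)),\,0\big), & 0<u(x),\ s\le u(x),\\ \big(\widetilde H(x,1),\,s-u(x)\big), & s\ge u(x),\end{array}\right. \]
noting that the two clauses agree on $s=u(x)$, that the division is harmless because $s\le u(x)$ keeps the $H$-parameter in $[0,1]$, and that the image lands in $M\times\{0\}$ on the first clause and (since $s>u(x)$ forces $u(x)<1$, hence $\widetilde H_1(x)\in A$) in $A\times[0,1]$ on the second; one checks $r|_W=\mathrm{id}$. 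To upgrade $r$ to a strong deformation retraction I interpolate the two coordinates separately, running the $\widetilde H$-parameter and lowering $s$ proportionally to a time $\sigma\in[0,1]$, i.e.
\[ R\big((x,s),\sigma\big)=\big(\widetilde H(x,\sigma\,\theta(x,s)),\,(1-\sigma)s+\sigma\,\eta(x,s)\big), \]
where $(\theta,\eta)=(s/u(x),0)$ when $s\le u(x)$ and $(\theta,\eta)=(1,s-u(x))$ when $s\ge u(x)$; then $R_0=\mathrm{id}$, $R_1=r$, and $R$ fixes $W$ throughout.

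The homotopy extension property is then formal: given $g:M\to Z$ and $F:A\times[0,1]\to Z$ with $F_0=g|_A$, the two morphisms agree on $A\times\{0\}$ and so glue to a morphism $\Phi:W\to Z$, and the composite $G=\Phi\circ r:M\times[0,1]\to Z$ satisfies $G_0=g$ and $G|_{A\times[0,1]}=F$, since $r$ fixes $M\times\{0\}$ and $A\times[0,1]$. The topological content here is the classical ``NDR pair $\Rightarrow$ cofibration'' argument; the main obstacle is entirely o-minimal, namely checking that each piecewise formula is a genuine morphism of locally definable spaces — definable on each open definable piece and strictly continuous for the generalized topology — and in particular justifying the two gluings along closed covers ($\overline U\cup(M\setminus U)=M$ for $\widetilde H$, and $(M\times\{0\})\cup(A\times[0,1])=W$ for $\Phi$) by the gluing property of morphisms over closed subspaces. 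The taper $\mu$ is exactly what makes $\widetilde H$ continuous across $\partial U$, while regularity and paracompactness (through Lemma \ref{15} and the neighborhood-retraction Fact) are what guarantee $\phi$ and $H$ exist in the first place.
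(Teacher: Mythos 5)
Your proposal is correct, but it takes a genuinely different route from the paper. The paper (following \cite{LSS}) treats this Fact as one of a group of statements ``whose proofs use the machinery of good triangulations'': one simultaneously triangulates $M$ and $A$ (Theorem \ref{triang}), reduces to a strictly locally finite simplicial pair, and builds the retraction of $M\times[0,1]$ onto $(A\times[0,1])\cup(M\times\{0\})$ prism by prism via the standard piecewise-linear radial projection, glued along the locally finite complex --- so the retraction is manifestly a morphism, with no division formulas or limit arguments. You instead derive the Fact formally from the two preceding facts (canonical neighborhood retraction, cf. III.1.1, plus Urysohn's lemma from Lemma \ref{15} e)) by the classical Steenrod NDR-pair argument. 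Your derivation is softer and more portable --- it would work in any setting where an NDR representation $(u,\widetilde H)$ exists, without invoking a new triangulation --- but it hides the triangulation machinery inside the neighborhood-retraction Fact rather than eliminating it, and it transfers the burden onto exactly the point you flag: verifying that the piecewise formulas for $r$ and $R$ are morphisms, in particular continuity along $A\times\{0\}$ where the parameter $s/u(x)$ degenerates. That check does go through, via the definable tube-lemma argument: given a definable open $V\ni a$, the preimage $\widetilde H^{-1}(V)$ contains $\{a\}\times[0,1]$, and completeness (definable compactness) of $[0,1]$ yields a definable neighborhood $B$ of $a$ with $B\times[0,1]\subseteq\widetilde H^{-1}(V)$; you should say this explicitly rather than only gesture at it. One further caution: your argument uses that the canonical neighborhood retraction $H$ fixes $A$ pointwise at all times ($H_t|_A=\mathrm{id}$), since otherwise $\widetilde H$ would not fix $A$ and $r$ would not restrict to the identity on $A\times[0,1]$; this relative property holds in the source III.1.1 of \cite{LSS}, but note that the paper's own definition of ``strong deformation retract'' does not state it, so you are (correctly) reading the stronger, intended meaning.
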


Since our spaces may be triangulated, the method of \emph{simplicial approximations}
(III.2.5 of \cite{LSS}) makes a good job. In particular, the method of \textit{well cored systems}  and \textit{canonical retractions} from III.2 in \cite{LSS} gives the following

\begin{f} \label{cores}
Each object of $\rplds(R)$ is homotopy equivalent to a partially complete one. A system $(M,A_1,...,A_k)$ of a regular paracompact locally definable space with closed subspaces is homotopy equivalent to an analogous system of partially complete spaces. 
\end{f}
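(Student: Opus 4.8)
The plan is to reduce everything to simplicial complexes via triangulation and then to deform onto the largest closed subcomplex, which will turn out to be partially complete. First I would apply Theorem \ref{embed} to realize $M$ as a dense locally definable subset of a partially complete regular paracompact space $\hat M$, and then triangulate $\hat M$ by Theorem \ref{triang} simultaneously with the family $\{M,A_1,\dots,A_k\}$ (for the system version; for a single space just with $\{M\}$). This produces an isomorphism $\phi\colon |Y|\to\hat M$ from the realization of a strictly locally finite \emph{closed} geometric simplicial complex $Y$, under which $M$ corresponds to the realization $|X|$ of a subcomplex $X\subseteq Y$ (i.e.\ a union of open simplices of $Y$) and each $A_i$ to the realization of a subcomplex $X_i\subseteq X$. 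Embedding first is what makes the relevant closed simplices available as honest simplices of $Y$. The partially complete candidate will be the \emph{core}: the closed subcomplex $X_c\subseteq Y$ consisting of all closed simplices $\bar\sigma$ with $\bar\sigma\subseteq|X|$, together with the induced cores $X_{i,c}$.

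Second, to guarantee that the core captures the homotopy type I would pass to the barycentric subdivision before taking the core. After subdivision every open simplex of $X$ contains, as a vertex, the barycenter of an original simplex; this barycenter is a $0$-simplex lying in $|X|$, hence in the core, so $X_c$ meets the closure of every simplex of $X$ and is a full closed subcomplex. (Without this step the naive core can be empty, as already for the open interval $(0,1)$ regarded as a single open $1$-simplex, which is nonetheless contractible.) Being a closed, strictly locally finite complex, $|X_c|$ is regular and paracompact, and every closed definable subspace is small and therefore lies in a finite closed subcomplex, which is complete; thus $|X_c|$ is partially complete, and likewise each $|X_{i,c}|$.

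Third, I would construct a strong deformation retraction of $|X|$ onto $|X_c|$ by the canonical-retraction method of III.2 of \cite{LSS}: on each simplex whose closure is not contained in $|X|$ one pushes the ``dangling'' half-open part toward the faces that already lie in the core, and these simplexwise deformations are glued, inductively over the skeleta and using local finiteness together with the shrinking-of-coverings and partition-of-unity results of Lemma \ref{15}, into a single morphism $H\colon|X|\times[0,1]\to|X|$. Choosing this retraction compatibly with all the subcomplexes $X_i$ — that is, making $(X;X_1,\dots,X_k)$ a \emph{well-cored system} — yields simultaneously a strong deformation retraction of each $|X_i|$ onto $|X_{i,c}|$, hence a homotopy equivalence of systems $(M,A_1,\dots,A_k)\simeq(|X_c|,|X_{1,c}|,\dots,|X_{k,c}|)$ onto a system of partially complete spaces, and in particular the single-space statement as the case $k=0$.

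The main obstacle is the third step: producing the retraction onto the core as a genuine morphism of locally definable spaces that respects all the subspaces $A_i$ at once. The difficulty is not the local, single-simplex picture but the globalization over an infinite (only locally finite) complex while remaining inside the generalized topology — one must verify admissibility of the coverings involved and that the glued map is strictly continuous — together with the bookkeeping ensuring that the chosen cores and retractions are consistent on the intersections forced by the inclusion structure of the system. This is precisely what the ``well-cored systems'' formalism of III.2 in \cite{LSS} is designed to handle, and it transfers to the o-minimal setting once the triangulation (Theorem \ref{triang}) and the paracompactness tools (Lemma \ref{15}) are in place.
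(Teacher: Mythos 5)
Your proposal is correct and takes essentially the same route as the paper, whose proof of this Fact consists precisely in triangulating via Theorem \ref{triang} and invoking the \emph{well cored systems} and \emph{canonical retractions} of III.2 in \cite{LSS} --- i.e.\ the core-of-the-barycentric-subdivision construction and its simultaneous retraction for systems that you reconstruct in detail. The only deviation is your preliminary appeal to Theorem \ref{embed}, which is harmless but not needed, since the strict local finiteness of the triangulated complex already makes the closed hulls of the simplices (and hence the core) available.
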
 

The following two main theorems from \cite{LSS} generalize,
but the \textit{mapping spaces} from III.3, which depend on the degrees of polynomials, should be replaced with similar mapping spaces depending on concrete formulas 
$\Psi (\overline{x}, \overline{y}, \overline{z})$, with parameters $\overline{z}$, of the language of the structure $R$
(one ``mapping space'' per each formula $\Psi$). 

 \vspace{2mm}
 Let $(M,A_1 ,...,A_r )$ and $(N,B_1 ,...,B_r )$ be systems of regular paracompact locally definable spaces
over $R$, where each $A_i (i=1,...,r)$ is closed in $M$. Let $h: C\rightarrow N $ be a given
morphism from a closed subspace $C$ of $M$ such that $h(C\cap A_i )\subseteq B_i$
for each $i=1,...,r$. Then we have

\begin{thm}[first Comparison Theorem, cf. III.4.2]
Let $R\prec S$ be an elementary extension. 
Then  the ``base field extension'' functor from $R$ to $S$
 induces a bijection between  the homotopy sets:
\begin{displaymath} \kappa :
 [(M,A_1 ,...,A_k ), (N,B_1 ,...,B_k ) ]^{h} \rightarrow
  [( M,A_1 ,...,A_k ), (N,B_1 ,...,B_k ) ]^{h} (S).
\end{displaymath}
\end{thm}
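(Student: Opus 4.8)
The plan is to reduce the statement to a fact about the connected components of a suitable mapping space, and then to invoke rule (a) of the base field extension, which says that connected components commute with passing from $R$ to $S$.

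First I would reduce to the partially complete case. By Fact \ref{cores}, each of the systems $(M,A_1,\ldots,A_k)$ and $(N,B_1,\ldots,B_k)$ is homotopy equivalent to an analogous system of partially complete regular paracompact spaces; since these homotopy equivalences are preserved under base field extension, and the pregiven morphism $h$ on the closed subspace $C$ can be transported along them, it suffices to treat the case where all spaces are partially complete. Next, using Theorem \ref{triang}, I would triangulate so that $C$, all the $A_i$ and $B_i$, and the graph of $h$ become subcomplexes. This makes the data piecewise linear and, by the remark following Theorem \ref{triang}, guarantees that every space in sight is locally pathwise connected and even locally contractible.

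Second, and this is the heart of the matter, I would construct mapping spaces indexed by formulas rather than by polynomial degrees. Fix a formula $\Psi(\bar x,\bar y,\bar z)$ of the language of $R$. The parameters $\bar z$ for which $\Psi(\cdot,\cdot,\bar z)$ is the graph of a morphism of systems $(M,A_1,\ldots,A_k)\to(N,B_1,\ldots,B_k)$ extending $h$ form a definable set, because single-valuedness, continuity, the conditions $f(A_i)\subseteq B_i$, and the constraint $f|_C=h$ are all first-order expressible over $R$. After regularization this yields an affine definable mapping space $\mathrm{Map}_\Psi$, an object of $\rplds(R)$, and the full mapping space $\mathrm{Map}^h$ is the admissible (directed) union over all formulas $\Psi$. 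The crucial identification is that two morphisms are homotopic relative to $C$ exactly when the corresponding parameter points lie in the same path component of $\mathrm{Map}^h$: a morphism $[0,1]\to\mathrm{Map}^h$ is precisely a homotopy, and any homotopy $H$, being a single morphism definable by one formula, traces a definable path lying inside a single $\mathrm{Map}_\Psi$. By local contractibility, path components coincide with connected components in the generalized topology, so
$$[(M,A_1,\ldots,A_k),(N,B_1,\ldots,B_k)]^h \;=\; \pi_0(\mathrm{Map}^h).$$

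Finally, since $R\prec S$, the formula $\Psi(\cdot,\cdot,\bar z)$ defines the graph of a morphism over $S$ if and only if the same first-order conditions hold over $S$; hence base field extension carries $\mathrm{Map}_\Psi$ to the mapping space defined by $\Psi$ over $S$, and $\mathrm{Map}^h(S)$ is exactly the full mapping space over $S$. Under the two identifications above, $\kappa$ is induced by the map $\pi_0(\mathrm{Map}^h)\to\pi_0(\mathrm{Map}^h(S))$ sending a component $D$ to $D(S)$, which is a bijection by rule (a) of the base field extension. I expect the main obstacle to be the honest construction of $\mathrm{Map}_\Psi$ as a regular paracompact locally definable space and the verification that its connected components compute the relative homotopy set — in particular, checking that the directed union over all formulas $\Psi$ is genuinely admissible, and that every homotopy, having bounded definable complexity, is captured by a single $\Psi$, so that the path-component description is faithful.
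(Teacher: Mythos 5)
Your overall route is the one the paper intends: its own ``proof'' of this theorem is precisely the remark that LSS III.4.2 generalizes once the mapping spaces of III.3, indexed by polynomial degrees, are replaced by mapping spaces indexed by formulas $\Psi(\overline{x},\overline{y},\overline{z})$ of the language of $R$, with the homotopy set computed as $\pi_0$ of the mapping space and base field extension preserving connected components (rule a)). Your second step reconstructs exactly this modification, and the first-order expressibility of single-valuedness, continuity, $f(A_i)\subseteq B_i$ and $f|_C=h$, together with definable path-connectedness of definably connected sets, makes the definable-level argument sound.

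There is, however, a genuine gap at the step where you assert that ``every homotopy, having bounded definable complexity, is captured by a single $\Psi$,'' and hence that $[(M,A_1,\ldots,A_k),(N,B_1,\ldots,B_k)]^h=\pi_0(\mathrm{Map}^h)$ with $\mathrm{Map}^h$ a directed union of definable pieces $\mathrm{Map}_\Psi$. This is true only when $M$ is definable. The theorem is stated for regular paracompact locally definable spaces, and for a genuinely locally definable $M$ (say $\mathrm{Fin}(R)$, or any infinite locally finite complex produced by your own triangulation step) a morphism $M\to N$ is given only piecewise by formulas --- one formula per member of an admissible covering, with no single $\Psi$ and no single parameter tuple $\overline{z}$ describing it --- so $\mathrm{Map}^h$ as you build it simply does not contain all morphisms, and your identification of the homotopy set with $\pi_0$ fails. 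The missing ingredient is the passage from the definable case to the paracompact case: after triangulating, exhaust $M$ by an admissible filtration of closed subcomplexes, apply the definable-level comparison result inductively to adjust the map chunk by chunk relative to what was already fixed, and compose the resulting countably many homotopies along a sequence $0=s_0<s_1<\cdots<1$ in the manner of Fact \ref{comphom}. This inductive scheme is exactly what the paper displays in its proof of the third Comparison Theorem for weakly definable spaces (Theorem \ref{3compwd}), which it explicitly advertises as ``a version of the proof of the first Comparison Theorem''; without it, your argument proves the theorem only for systems of definable spaces.
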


\begin{thm}[second Comparison Theorem, cf.  III.5.1]
Let $R$ be an o-minimal expansion of $\mathbb{R}$.
 Then the ``forgetful'' functor $\rplds(R)\rightarrow \mathbf{Top}$ to the topological category
 induces  a bijection between the homotopy sets
\begin{displaymath} \lambda : [(M,A_1 ,...,A_k ), (N,B_1 ,...,B_k ) ]^{h} \rightarrow
 [(M,A_1 ,...,A_k ), (N,B_1 ,...,B_k ) ]^{h}_{top}.
 \end{displaymath} 
\end{thm}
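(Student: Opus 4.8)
The plan is to reduce everything, via the triangulation theorem (Theorem~\ref{triang}) and Fact~\ref{cores}, to the case of strictly locally finite (partially complete) geometric simplicial complexes, and then to exploit that over $\mb{R}$ piecewise linear maps with rational data are definable in every o-minimal expansion $R$. First I would note that the forgetful functor is well defined: since $R$ expands $\mb{R}$, the strong topology of an object of $\rplds(R)$ is the usual topology on the underlying set, every morphism is continuous in this topology, and the parameter interval $[0,1]_R$ is the usual $[0,1]$. Hence $\lambda$ is the obvious map sending a definable homotopy class (rel $C$, as a morphism of systems) to the underlying topological one, and it is immediately compatible with the relative datum $h$ and the system structure. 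Triangulating $M$ and $N$ simultaneously and compatibly with all the $A_i$, $B_i$, $C$ and $h$ (and, where convenient, replacing the spaces by homotopy equivalent partially complete ones), I reduce to the situation where $M$, $N$ are realizations of strictly locally finite simplicial complexes, the $A_i$, $B_i$, $C$ are subcomplexes, and PL/simplicial maps are genuine morphisms of $\rplds(R)$.

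Surjectivity of $\lambda$ then follows from simplicial approximation. Given a continuous $F\colon M\to N$ with $F(A_i)\subseteq B_i$ and $F|_C$ equal to the prescribed (now simplicial) $h$, I subdivide $M$ finely and take a simplicial approximation $\tilde F$ of $F$ rel $C$ carrying each $A_i$ into $B_i$. Since the approximating map is piecewise linear over $\mb{Q}$ and the complexes are strictly locally finite, $\tilde F$ is a morphism of $\rplds(R)$, and the standard straight-line homotopy inside the simplices of $N$ shows $\tilde F$ is topologically homotopic to $F$ rel $C$ through maps of systems. Thus $\lambda([\tilde F])=[F]_{top}$.

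Injectivity of $\lambda$ is proved by the same device one dimension higher. Suppose $f_0,f_1$ are morphisms of systems agreeing with $h$ on $C$ and topologically homotopic rel $C$ by a continuous $H\colon M\times[0,1]\to N$ respecting the subspaces. I triangulate $M\times[0,1]$ compatibly with $M\times\{0\}$, $M\times\{1\}$, $C\times[0,1]$, the products $A_i\times[0,1]$, and with the boundary datum $H$ on $\big(M\times\{0,1\}\big)\cup\big(C\times[0,1]\big)$, where $H$ is already definable (built from $f_0$, $f_1$, $h$). Applying simplicial approximation rel this subcomplex while carrying each $A_i\times[0,1]$ into $B_i$, I obtain a PL, hence definable, morphism $\tilde H\colon M\times[0,1]\to N$ of systems restricting to $f_0$, $f_1$, $h$ as required. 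Then $\tilde H$ is a definable homotopy rel $C$ witnessing $[f_0]=[f_1]$, so $\lambda$ is injective.

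The main obstacle is carrying out these simplicial approximations \emph{simultaneously} with respect to all the imposed constraints — fixing $C$ where $h$ is prescribed, sending each $A_i$ into $B_i$, and keeping everything defined globally on non-compact, possibly infinite-dimensional, paracompact spaces. The classical approximation theorem uses compactness to pass to a single finite subdivision; here this must be replaced by the strict local finiteness of the triangulations together with the paracompactness of the spaces, so that the local approximations and subdivisions glue to a single locally definable (PL) morphism and the intermediate straight-line homotopies remain morphisms compatible with the admissible coverings. This is precisely where $R$ being an expansion of $\mb{R}$ is used in full: it guarantees both that the underlying topology is the standard one and that the piecewise linear data produced by approximation are $R$-definable, thereby bridging the o-minimal and topological homotopy theories.
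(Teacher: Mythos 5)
Your overall strategy --- triangulate via Theorem \ref{triang} (after replacing spaces by partially complete ones using Fact \ref{cores}), approximate simplicially, and run the same argument one dimension up for injectivity --- is viable in outline, but the boundary-fixing steps fail as stated, in both halves of your argument. First, you ``triangulate $M$ and $N$ simultaneously and compatibly with \dots $h$'' so that $h$ becomes simplicial: the triangulation theorem here, as in o-minimal geometry generally, triangulates a space together with a locally finite family of \emph{subsets}; there is no tool that makes a pregiven definable \emph{morphism} simplicial by a choice of triangulations (triangulation of definable maps is not available), so this reduction is unjustified, and your rel-$C$ approximation of $F$ in the surjectivity step presupposes it. Second, and more seriously, in the injectivity step you apply simplicial approximation rel the subcomplex $(M\times\{0,1\})\cup(C\times[0,1])$ ``where $H$ is already definable'' and claim the resulting PL morphism $\tilde H$ restricts to $f_0$, $f_1$, $h$. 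Relative simplicial approximation can fix the map only where it is already \emph{simplicial} (piecewise linear), not merely definable: a PL map $\tilde H$ cannot agree with $f_0$ on $M\times\{0\}$ when $f_0$ is definable but not piecewise linear --- e.g.\ a morphism built from the exponential function when $R=\mb{R}_{\exp}$. So the $\tilde H$ with the asserted boundary values simply does not exist, and with it the witness for $[f_0]=[f_1]$.

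The repair is standard but is precisely the idea your write-up is missing: first homotope $h$ (and, in the injectivity step, $f_0$ and $f_1$) \emph{definably} to simplicial maps --- a simplicial approximation $\tilde h$ is carrier-close to $h$, so the straight-line homotopy inside the simplices of $N$ is itself definable because $h$ is --- and then invoke the homotopy extension property (the Fact corresponding to III.1.4) in \emph{both} categories to show that the relative homotopy sets $[\,\cdot\,]^{h}$ and $[\,\cdot\,]^{\tilde h}$ are in bijection compatibly with $\lambda$; only after this may one approximate rel the now-simplicial boundary data (noting that the constant homotopy of a simplicial $h$ is simplicial for a prism triangulation of $C\times[0,1]$). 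You should also be aware that this simplicial-approximation route is not the proof the paper points to: the paper proves the theorem as in III.5.1 of Delfs--Knebusch, with the \emph{mapping spaces} of III.3 --- there indexed by polynomial degrees --- replaced by definable mapping spaces indexed by formulas $\Psi(\overline{x},\overline{y},\overline{z})$ of the language of $R$, one definable family of maps per formula; homotopy classes are then analyzed through (connected components of) these definable families rather than through PL approximation alone, with the simplicial machinery of III.2 as common substrate. Your approach, once the HEP reduction above is inserted, would be a legitimate alternative that trades the mapping-space bookkeeping for careful locally finite relative approximation, exploiting that over an expansion of $\mb{R}$ every PL map is automatically definable.
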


 Moreover, a version of the proof of the first Comparison Theorem gives

\begin{thm}[third Comparison Theorem]\label{3compld}
If $R'$ is an o-minimal expansion of $R$, then the
``expansion'' functor induces  a bijection between the
homotopy sets
\begin{displaymath} \mu  : 
[(M,A_1 ,...,A_k ), (N,B_1 ,...,B_k ) ]^{h}_{R} \rightarrow
 [(M,A_1 ,...,A_k )_{R'}, (N,B_1 ,...,B_k )_{R'} ]^{h}_{R'}.
 \end{displaymath}
\end{thm}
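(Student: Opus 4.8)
The plan is to run the proof of the first Comparison Theorem almost verbatim, replacing the elementary-extension transfer by the expansion transfer and using the formula-indexed mapping spaces described above. First I would reduce to partially complete systems: by Fact~\ref{cores} both $(M,A_1,\dots,A_k)$ and $(N,B_1,\dots,B_k)$ are homotopy equivalent over $R$ to analogous systems of partially complete spaces, and since the expansion functor sends morphisms to morphisms and $M\times[0,1]$ to $M_{R'}\times[0,1]$, it preserves homotopies and hence homotopy equivalences; so it suffices to treat the partially complete case. Next I would triangulate: by Theorem~\ref{triang} choose a strictly locally finite triangulation of $M$ over $\mathbb{Q}$ simultaneously compatible with $C$ and all the $A_i$, and one of $N$ compatible with the $B_i$, arranged so that the pregiven $h\colon C\to N$ is simplicial (triangulate the graph of $h$, as in the proof of Theorem~\ref{bertini}). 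Because $M_{R'}$ and $N_{R'}$ are the \emph{same} simplicial complexes reinterpreted over $R'$, the combinatorial data of the triangulation is unchanged by expansion.

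For surjectivity of $\mu$, take an $R'$-morphism $F\colon (M_{R'},(A_i)_{R'})\to(N_{R'},(B_i)_{R'})$ extending $h_{R'}$ and apply relative simplicial approximation (III.2.5 of \cite{LSS}, available by Theorem~\ref{triang}) over $R'$, rel $C$ and respecting the subcomplexes. The resulting simplicial map $g$ is determined by its values on vertices, which are vertices of the $R$-complex $N$; hence $g$ is piecewise linear with $R$-rational data, is therefore an honest $R$-morphism extending $h$ with $g(A_i)\subseteq B_i$, and is $R'$-homotopic to $F$, so $\mu[g]_R=[F]_{R'}$. The one point needing care is that the subdivision used for the approximation can be taken over $R$: it suffices to iterate barycentric subdivision $m$ times until the mesh drops below a Lebesgue number of the covering of $M_{R'}$ by the $F$-preimages of the open stars of $N$. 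As the triangulation is over $\mathbb{Q}$ the mesh bounds are rational, and on each closed definable (hence complete) subcomplex $F$ is uniformly continuous; the required $m$ is then a standard natural number, the same in $R$ and in $R'$, so the $m$-fold barycentric subdivision is already defined over $R$.

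For injectivity, suppose $g_0,g_1$ are $R$-morphisms that become $R'$-homotopic via some $G\colon M_{R'}\times[0,1]\to N_{R'}$ (rel $C$, respecting the subspaces). I would apply the same relative simplicial approximation to $G$, now rel the two ends $M\times\{0,1\}$ (where $G$ is already the $R$-simplicial $g_0,g_1$) and rel $C\times[0,1]$; the approximant $G'$ is again $R$-definable, hence a genuine morphism $M\times[0,1]\to N$ over $R$, that is, an $R$-homotopy from $g_0$ to $g_1$. Equivalently, in the language of the formula-indexed mapping spaces, all of $F,g,G$ are coded by a single $R$-formula $\Psi(\overline{x},\overline{y},\overline{z})$ (that of simplicial maps of a fixed $R$-subdivision), so the claim reduces to the statement that the locally definable mapping space $\mathrm{Map}_\Psi$ over $R$ and its expansion have the same connected (equivalently path-) components; this holds because expansion leaves the strong topology unchanged and preserves connectedness (by the expansion rules recalled above), exactly as the base field extension does in the first Comparison Theorem.

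The main obstacle is precisely this \emph{descent} of simplicial approximation from $R'$ to $R$: an arbitrary $R'$-morphism has an $R'$-definable, not $R$-definable, graph, so it cannot be coded directly in an $R$-mapping space, and the whole argument hinges on showing that up to $R'$-homotopy every such morphism is simplicial for a subdivision defined over $R$. Making this uniform on a non-compact, merely locally finite complex is exactly what forces the reduction to partially complete spaces (so that uniform continuity is available on each complete piece) and the choice of a triangulation over $\mathbb{Q}$ (so that the Lebesgue-number estimate collapses to a finite, transfer-invariant number of barycentric subdivisions).
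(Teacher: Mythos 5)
Your plan breaks at exactly the step you identify as "the one point needing care": the claim that the subdivision needed for simplicial approximation can be taken to be a \emph{standard} number $m$ of barycentric subdivisions of a triangulation over $\mathbb{Q}$. Remember that $R'$ is an o-minimal expansion of $R$, so the two structures share the same underlying (in general non-archimedean) real closed field. Uniform continuity of $F$ on a complete subcomplex, and the Lebesgue number of the covering by $F$-preimages of open stars of $N$, only produce positive elements $\delta$ of that field, and $\delta$ may be infinitesimal with respect to $\mathbb{Q}$. A concrete obstruction already occurs for $M=N=[0,1]$: with $\varepsilon>0$ infinitesimal, the map $F(x)=\min(x/\varepsilon,1)$ pulls the star of the vertex $0$ back to $[0,\varepsilon/2)$, so no subdivision of rational mesh --- hence no standard iterate of barycentric subdivision of a $\mathbb{Q}$-triangulation --- can satisfy the star condition. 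Since the mesh of $m$-fold barycentric subdivision is bounded below by a positive rational for every standard $m$, your "transfer-invariant $m$" does not exist, and the descent of the approximating simplicial map to $R$ collapses. This is not a removable technicality: it is precisely the known failure of classical simplicial approximation arguments at the definable-sets level that the paper's own proof sidesteps. The paper does not reprove the theorem by approximation at all; it cites the detailed proof of E. Baro and M. Otero \cite{BaOt} for systems of definable sets, whose key tool is the \emph{Normal Triangulation Theorem} of \cite{Ba}, used to obtain an applicable version of Lemma II.4.3 of \cite{LSS}, and then extends to systems of regular paracompact locally definable spaces by the machinery of \cite{LSS} (as in the proof of the first Comparison Theorem, with the formula-indexed mapping spaces). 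Normal triangulations are exactly the replacement for the mesh/Lebesgue-number argument that is unavailable over non-archimedean fields.

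A second, smaller gap: you propose to arrange that the pregiven morphism $h\colon C\to N$ be simplicial by "triangulating the graph of $h$". Triangulating the graph makes the graph a subcomplex, but it does not make $h$ affine on simplices; a definable map such as $h(x)=x^2$ is simplicial for no choice of triangulations, so the relative ($\mathrm{rel}\ C$) bookkeeping cannot be set up this way. The rel-$C$ structure has to be handled as in \cite{LSS} and \cite{BaOt}, via the homotopy extension property and the relative form of the approximation results, not by forcing $h$ into simplicial position. Your reduction to partially complete systems via Fact \ref{cores} and the use of Theorem \ref{triang} are sound and consistent with the paper's framework, but the core of the proof --- why an $R'$-morphism is $R'$-homotopic rel $C$ to an $R$-morphism --- genuinely requires the normal-triangulation input and is not recovered by your argument.
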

\begin{proof}[Sketch of proof.]
E. Baro and M. Otero \cite{BaOt} have written a detailed proof of this theorem in the case of systems of definable sets. They use a natural tool of  ``normal triangulations'' from \cite{Ba} to get an applicable version of II.4.3 from \cite{LSS}. The theorem  extends to the general case as in \cite{LSS}.
\end{proof}

Because of the locally finite character of the regular paracompact locally definable spaces,
by inspection of the proof of the triangulation theorem (II.4.4 in \cite{LSS}),
each such space has an isomorphic copy that is built from sets definable without parameters
 glued together along sets that are definable without paramaters.
 It is possible to triangulate even ``over the field of real algebraic numbers $\overline{\mb{Q}}$ '' or ``over the field of rational numbers $\mb{Q}$''.
Moreover, if two 0-definable subsets of $R^n$
are isomorphic as definable spaces (i. e. definably homeomorphic), then there is a 0-definable isomorphism  between them
(we may change arbitrary parameters into 0-definable parameters in the defining formula  of an isomorphism).

By the \textit{(noncompact) o-minimal version of Hauptvermutung} for the structure $R$, we understand the following statement, which is a version of Question 1.3 in \cite{BO}:

\textit{Given two semialgebraic (definable in the field structure of $R$) sets in some $R^n$, if they are definably homeomorphic, then they are semialgebraically homeomorphic.}  

In other words:\textit{ if two affine semialgebraic spaces are isomorphic as definable spaces, then they are  isomorphic as semialgebraic spaces.}


It follows from Theorem 2.5 in \cite{Shiota} that this statement is true for every $R$.
Thus the category of regular paracompact locally semialgebraic spaces 
$\rplss(R)$ over (the underlying field of) $R$ may be viewed as a subcategory of $\rplds(R)$, but not as a full subcategory. Moreover, by triangulation with vertices having coordinates in the field of real algebraic numbers $\overline{\mathbb{Q}}$, we have the following fact:

 \textit{Each regular paracompact locally definable space over $R$ is \emph{isomorphic} to a regular paracompact locally semialgebraic space over (the underlying field of) $R$.}

Thus,  by the third Comparison Theorem, the homotopy categories $H\rplss(R)$ and $H\rplds(R)$ are equivalent.
Analogously, we get

\begin{cor} \label{concld}
The homotopy categories  of: systems $(M,A_1,...,A_k)$ of regular paracompact locally definable spaces with finitely many closed subspaces and  systems $(M,A_1,...,A_k)$ of regular paracompact locally semialgebraic spaces with finitely many closed subspaces 
(over the ``same'' $R$) are equivalent.
\end{cor}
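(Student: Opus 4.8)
The plan is to realise the equivalence through the \textbf{expansion functor}. Write $\bar R$ for the field reduct of $R$, so that ``semialgebraic over $R$'' means ``definable over $\bar R$'': thus $\rplss(R)$ coincides with $\rplds(\bar R)$ and $R$ is an o-minimal expansion of $\bar R$. Sending a system $(M,A_1,\ldots,A_k)$ over $\bar R$ to its expansion $(M,A_1,\ldots,A_k)_R$ over $R$ defines a functor $\Phi$ from systems of regular paracompact locally semialgebraic spaces (with finitely many closed subspaces) to the analogous systems of regular paracompact locally definable spaces; since expansion is compatible with the interval and with products, $\Phi$ carries homotopies to homotopies and so descends to a functor $H\Phi$ between the two homotopy categories. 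I will prove that $H\Phi$ is full, faithful and essentially surjective, which yields the equivalence. (Note that the naive inclusion $\rplss(R)\hookrightarrow\rplds(R)$ fails to be full, as there may be definable morphisms between semialgebraic spaces that are not semialgebraic; this defect is exactly what Theorem \ref{3compld} repairs at the level of homotopy classes.)

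Fullness and faithfulness of $H\Phi$ on each Hom-set are precisely the content of the third Comparison Theorem (Theorem \ref{3compld}) applied to the expansion $\bar R\subseteq R$. For semialgebraic systems $(M,A_\bullet)$, $(N,B_\bullet)$ it gives a bijection
\begin{displaymath}
[(M,A_1,\ldots,A_k),(N,B_1,\ldots,B_k)]_{\bar R}\longrightarrow
[(M,A_1,\ldots,A_k)_R,(N,B_1,\ldots,B_k)_R]_{R}
\end{displaymath}
in which the pre-given morphism $h$ is specialised to the empty morphism on $C=\emptyset$, so as to recover the absolute homotopy sets. The source is the morphism set of $H\rplss(R)$ and the target that of $H\rplds(R)$, so $H\Phi$ is bijective on every Hom-set.

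For essential surjectivity I would begin with an arbitrary system $(M,A_1,\ldots,A_k)$ of regular paracompact locally definable spaces with each $A_i$ closed, and apply the simultaneous triangulation theorem (Theorem \ref{triang}) to the finite --- hence locally finite --- family $\{A_1,\ldots,A_k\}$. As observed in the discussion following Theorem \ref{3compld}, the strictly locally finite character of these spaces allows the triangulation to be taken ``over $\overline{\mb{Q}}$''; then the geometric simplicial complex and each subspace $A_i$, being a union of images of open simplices, are defined over $\overline{\mb{Q}}$, so the whole triangulated system is literally the expansion to $R$ of a semialgebraic system. Because the triangulating map $\phi$ is an isomorphism of locally definable spaces carrying each $A_i$ onto the corresponding subcomplex, the original system is isomorphic in $\rplds(R)$, a fortiori in $H\rplds(R)$, to an object in the image of $\Phi$.

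The delicate step is essential surjectivity: one must ensure that a \emph{simultaneous} triangulation of $M$ together with all the closed subspaces $A_1,\ldots,A_k$ can be chosen with vertices in $\overline{\mb{Q}}$ and that it genuinely produces a semialgebraic system --- that is, that each $A_i$ becomes a ($0$-definable, hence semialgebraic) subspace and that $\phi$ is an isomorphism of the \emph{entire} system, respecting all the $A_i$ at once, not merely of the underlying spaces. This is the system-level analogue of the single-space statement preceding the corollary and is supplied by the compatibility clause of Theorem \ref{triang}. Granting it, the bijection on Hom-sets from Theorem \ref{3compld} completes the verification that $H\Phi$ is an equivalence of homotopy categories.
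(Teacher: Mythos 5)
Your proposal is correct and takes essentially the same route as the paper's own proof, which likewise establishes essential surjectivity of the ``expansion'' functor via the simultaneous triangulation theorem (Theorem \ref{triang}, taken over $\overline{\mb{Q}}$ as discussed before the corollary) and fullness and faithfulness via the third Comparison Theorem (Theorem \ref{3compld}). Your additional remarks---specialising $C=\emptyset$ in Theorem \ref{3compld} and checking that the triangulation respects all the subspaces $A_i$ simultaneously---merely make explicit what the paper leaves implicit.
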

\begin{proof}
By the triangulation theorem \ref{triang}, every object of the former category is isomorphic to an object of the later category. Thus the ``expansion'' functor is essentially surjective.
By the Comparison Theorem \ref{3compld}, it is also full and faithfull. This implies that this functor is an equivalence of categories. 
\end{proof}

It follows that the homotopy theory for regular paracompact locally definable spaces  can 
to a large extent
be transfered from the semialgebraic homotopy theory and, eventually, from the topological homotopy theory, as in \cite{LSS}.

Other important facts about regular paracompact locally definable spaces will
be developed in a more general setting of definable CW-complexes and weakly definable spaces.

\section{Weakly definable spaces}

In homotopy theory one needs to use quotient spaces
(e. g. mapping cylinders, mapping cones, cofibers,  smash products, reduced suspensions, CW-complexes),
and this operation is not always executable in the category of
locally definable spaces (as in the semialgebraic case). That is why  weakly definable spaces, which are analogues of arbitrary Hausdorff topological spaces, need to be introduced.
We start here to re-develop the theory of M. Knebusch from \cite{WSS}.

Let $(M, O_M )$ be a space over $R$, and let $K$ be a small subset of $M$.
We can induce a space on $K$ in the following way:

i) \textit{open} sets in $K$ are the intersections of open sets on $M$ with $K$,

ii) \textit{admissible} coverings in  $K$ are such open coverings that some
 finite subcovering
already covers the union,

iii) a function $h:V\rightarrow R$ is a \textit{section} of $O_K (V)$
 if it is a finite open  union  of restrictions to $K$ of sections of the sheaf
$O_M$.\\
We call $(K, O_K )$ a \textbf{small subspace} of $(M, O_M )$.

A subset $K$ of $M$ is called \textbf{closed definable} in $M$ if $K$ is
 closed, small, and the space
$(K, O_K)$ is a definable space. The collection of closed definable subsets of $M$ is denoted by $\overline{\gamma}(M)$.
The set $K$ is called a \textbf{polytope} if it is a closed definable complete space.
  We denote the collection of polytopes of $M$ by $\gamma_c (M)$.

A \textbf{weakly definable space} (over $R$) is a space $M$ (over $R$)
 having a family, indexed by a partially ordered set $A$,
of regular closed   definable subsets $(M_{\alpha} )_{\alpha \in A}$ such that the following conditions hold:

WD1) $M$ is the union of all $M_{\alpha}$,

WD2) if $\alpha \le \beta$ then $M_{\alpha}$ is a (closed) subspace of $M_{\beta}$,

WD3) for each $\alpha$ there is only a \underline{finite} number of $\beta$ such that $\beta \le \alpha$,

WD4) the family $(M_{\alpha} )$ is strongly inverse directed, i. e.
 for each $\alpha, \beta$ there is some
$\gamma$ such that $\gamma \le \alpha$, $\gamma \le \beta$ and $M_{\gamma}
 = M_{\alpha} \cap M_{\beta} $,

WD5) the set of indices is directed: for each $\alpha, \beta$ there is $\gamma$ with $\gamma \ge \alpha$, $\gamma \ge \beta$,

WD6) the space $M$ is the \textit{inductive limit} of the spaces $(M_{\alpha} )$,
 what means the following:

a) a subset $U$ of $M$ is \textit{open} iff each $U\cap M_{\alpha}$ is open
 in $M_{\alpha}$,

b) an open  family $(U_{\lambda} )$ is \textit{admissible} iff for each $\alpha$ the
  restricted family
$(M_{\alpha} \cap U_{\lambda} )$ is admissible in $M_{\alpha}$,

c) a function $h:U\rightarrow R$ on some open $U$ is a \textit{section} of $O_M$
 iff all the restrictions
$h|U\cap M_{\alpha}$ are sections of respective sheaves $O_{M_{\alpha}}$.

Such family $(M_{\alpha} )$ is called an \textbf{exhaustion} of $M$.

A space $M$ is called a \textbf{weak polytope} if $M$ has an exhaustion
 composed of polytopes. \textbf{Morphisms} and \textbf{isomorphisms} of weakly definable spaces are their morphisms and isomorphisms as spaces (we get the full subcategory $\wds(R)$
of $\spa(R)$). 
 
A \textbf{weakly definable subset} is such a subset $X\subseteq M$
that: has definable intersections with all members of some exhaustion $(M_{\alpha} )$,
and is considered with the exhaustion $(X\cap M_{\alpha} )$, hence it may be considered as a \textbf{subspace} of $M$ (cf. IV.3 in \cite{WSS}). 
 
A subset $X$ of  $M$ is \textbf{definable} if it is weakly definable and the space $(X,O_X)$ is definable. A subset $X$ of $M$ is definable iff it is weakly definable and is contained in a member of an exhaustion $M_{\alpha}$ (cf. IV.3.4 of \cite{WSS}).

The \textbf{strong topology} on $M$ is the topology that makes the topological space $M$ the respective inductive limit of the topological spaces $M_{\alpha}$.
The unpleasant fact about the weakly definable spaces (comparising with the locally definable spaces)  is that points may not have small neighborhoods (see Example \ref{26}).
Moreover, the open sets from the generalized topology may not form  a basis of the strong topology (cf. Appendix C in \cite{WSS}).

The closure of a definable subset of $M$ is always definable (cf. IV.3.6 of \cite{WSS}), so the topological closure operator restricted to the class $\gamma(M)$ of definable subsets of $M$ may be treated as the closure operator of the generalized topology. 
The weakly definable subsets are  ``piecewise constructible'' from the generalized topology (compare \cite{ap2}).

All  weakly definable spaces are Hausdorff, actually even ``normal'', see IV.3.12 in \cite{WSS}.  We can consider ``expansions'' and ``base field extensions'' of weakly definable spaces (compare considerations in IV.2) or morphisms (in the case of a base field extension) 
similar to the operations defined for locally definable spaces. They do not depend on the chosen exhaustion and preserve connectedness (cf. IV.2 and IV.3 of \cite{WSS}).

\begin{rem}
Assume that a weakly definable space $M$ is also locally definable.
Then $\overline{\gamma}(M)$ is the family of all closed small subsets (as in \cite{LSS}, p. 57), since closed small subsets are definable as subspaces.
We can speak about complete subspaces of $M$. It is easy to see that complete subspaces are always closed. Thus the family $\gamma_c (M)$ contains exactly the definable complete subspaces (as in \cite{LSS}, p. 81). 
\end{rem}

Fiber products exist in $\wds(R)$ (cf. IV.3.20 of \cite{WSS}).
So we (analogously to the case of locally definable spaces) define \textbf{proper} and \textbf{partially proper} mappings between weakly definable spaces as well as \textbf{complete} and \textbf{partially complete}
spaces. It appears that the complete spaces are the polytopes, and the partially complete spaces are the weak polytopes (cf. IV.5 in \cite{WSS}).

The following examples from \cite{WSS} remain relevant in the case of an  o-minimal expansion of a real closed field.

\begin{exam}[cf. IV.1.5 in \cite{WSS}]
The category $\rplds(R)$ is a full subcategory of $\wds(R)$. An exhaustion of an object $M$ of $\rplds(R)$ is given by all finite subcomplexes $Y$ in $X$ that are closed in $X$ for some triangulation $\phi :X\to M$.
\end{exam}

\begin{exams}[cf. IV.1.8 and IV.4.7-8 in \cite{WSS}] \label{26}
 An infinite wedge of circles is a weak polytope but not
 a locally definable space.
 A ``countable comb'' or ``uncountable comb'' is a weak polytope which is not a locally  definable space.
\end{exams}

\begin{ew}[cf. IV.4.7 in \cite{WSS}] 
Consider the ``countable comb'' from IV.4.7 in \cite{WSS}. This example shows that the topological closure of a weakly definable subset may not be weakly definable. Moreover,  the naive ``Arc Sellecting Lemma for weakly definable spaces'' does not hold. 
\end{ew}

On the other hand, the following examples did not appear explicitely
in \cite{WSS}.

\begin{exam}
Consider an uncountable proper subfield $F$ of $\mb{R}$. Let $X$ be a subset of the unit square
$[0,1]^2$ consisting of points that have at least one coordinate in $F$.
This set has a natural exhaustion making $X$ into a weak polytope over $\mb{R}$. This weak polytope is not locally simply connected.
\end{exam}

\begin{exam} An open interval of $R$ is a definable space but not a weak polytope, an infinite comb with such a ``hand'' is a weakly definable space but not a weak polytope.
\end{exam}

Glueing weakly definable spaces is possible: for a closed pair $(M,A)$ and a partially proper morphism $f:A\rightarrow N$ the quotient space of $M \sqcup N$
by an equivalence relation identifying each $a\in A$ with $f(a)$ is a weakly definable space $M\cup_{f} N$ called the \textbf{space obtained by glueing $M$ to $N$ along $A$ by $f$}. Then the projection $\pi :M\sqcup N \to M\cup_{f} N$ is partially proper and strongly surjective, cf. IV.8.6
in \cite{WSS}. (A morphism $f:M\rightarrow N$ is \textbf{strongly surjective} if each definable subset of $N$ is covered by the image of a definable subset of $M$.)

A family $\mc{A}$ of subsets of  a weakly definable space $M$ will be called \textbf{piecewise finite} if for each $D\in \gamma(M)$, the set $D$ meets only finitely many members of $\mc{A}$. 
(Such families are called ``partially finite'' in \cite{WSS}.)

A \textbf{definable partition} of a weakly definable space $M$ is a piecewise finite partition
of $M$ into a subset $\Sigma$ of the family  $\gamma (M)$ of definable subsets of $M$.
An element $\tau$ of $\Sigma$ is an \textbf{immediate face} of $\sigma$
if $\tau \cap (\overline{\sigma} \setminus \sigma )\neq\emptyset$.
Then we write $\tau \prec \sigma $. A \textbf{face} of $\sigma$
is an element of some finite chain of immediate faces finishing with $\sigma$.
 (Each $\sigma$ has only finitely many immediate faces, even a finite numer of faces, cf. V.1.7 in \cite{WSS}).

A \textbf{patch decomposition} of $M$ is a definable partition $\Sigma$ of $M$ such that:
 for each $\sigma \in \Sigma$ there is a number $n \in \N$ such that any  chain
$\tau_r \prec \tau_{r-1} \prec ... \prec \tau_0 =\sigma$ in $\Sigma$ has length
$r\le n$.
The smallest such $n$ is called the \textbf{height} of $\sigma$ and denoted by $h(\sigma )$.
A \textbf{patch complex} is a pair $(M, \Sigma (M))$ consisting of a space $M$ and
a patch decomposition $\Sigma (M)$ of $M$. Elements of the patch decomposition are called \textbf{patches}. 

\begin{exam}[cf. V.1.4]
Each exhaustion gives a patch decomposition of $M$.
\end{exam}

Instead of triangulations for $\rplds(R)$, we have available for $\wds(R)$
so called special patch decompositions. A \textbf{special patch decomposition}
is such a patch decomposition that for each $\sigma \in \Sigma$, the pair $(\overline{\sigma},\sigma)$ is isomorphic to the pair with the second element being a standard open simplex, and the first element this standard open simplex with some added open proper faces.

\begin{f}[cf. V.1.12 in \cite{WSS}]  
Let $M$ be an object of $\wds(R)$ and let $\mc{A}$ be a piecewise finite family of subspaces.
Then there is a simultaneous special patch decomposition of $M$ and the family $\mc{A}$.
\end{f}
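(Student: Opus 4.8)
The plan is to produce the patches as the images of the open simplices of a \emph{compatible} family of triangulations of the members of an exhaustion of $M$, constructed inductively along the index poset. Fix an exhaustion $(M_\alpha)_{\alpha\in A}$ of $M$. Each $M_\alpha$ is a regular closed definable subspace, hence a regular definable space, so by the theorem of Robson and van den Dries it is affine, and (being definable) it is paracompact; thus $M_\alpha$ is an object of $\rplds(R)$ to which the triangulation theorem \ref{triang} applies. Because $\mc{A}$ is piecewise finite and $M_\alpha$ is definable (so small), the trace family $\mc{A}_\alpha=\{\,A\cap M_\alpha : A\in\mc{A}\,\}$ has only finitely many distinct members, each a definable subset of $M_\alpha$; in particular it is a locally finite family there.

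By WD3 each index has only finitely many predecessors, so there are no infinite descending chains and the poset $A$ carries a finite height function; this licenses an induction. At the stage of $\alpha$, the set $C_\alpha=\bigcup_{\beta<\alpha}(M_\beta\cap M_\alpha)$ is, by WD3, a \emph{finite} union of closed subspaces that are already triangulated, and by WD4 any two of them agree on their intersection $M_\beta\cap M_{\beta'}=M_\gamma$ (itself an earlier member). Hence the earlier triangulations glue to a single triangulation of the closed subspace $C_\alpha$ of $M_\alpha$, and it remains to extend this to a triangulation $\phi_\alpha\colon X_\alpha\to M_\alpha$ that also makes every member of $\mc{A}_\alpha$ a union of images of open simplices. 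This is exactly the \emph{relative, simultaneous} form of the triangulation theorem \ref{triang} (triangulating a regular paracompact locally definable space rel a prescribed triangulation of a closed subspace and compatibly with a finite family), and carrying it through the induction without disturbing the simplices already fixed on $C_\alpha$ is the main obstacle of the argument; it is precisely the ingredient used for V.1.12 in \cite{WSS}.

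Let $\Sigma(M)$ be the collection of all images $\phi_\alpha(s)$ of open simplices $s\in\Sigma(X_\alpha)$, taken at the stage where they first appear, i.e.\ those whose image lies in $M_\alpha\setminus C_\alpha$. By WD6 and the built-in compatibility, $\Sigma(M)$ is a partition of $M$ into definable subsets, and every member of $\mc{A}$ is a union of patches. It is piecewise finite: any $D\in\gamma(M)$ lies in some $M_\alpha$, and by compatibility every patch meeting $M_\alpha$ lies inside it and is an open simplex of the strictly locally finite triangulation $\phi_\alpha$, so $D$ meets only finitely many patches. For each patch $\sigma=\phi_\alpha(s)$ the pair $(\overline{\sigma},\sigma)$ is the image of a closed simplex together with its interior, hence is isomorphic to a standard open simplex with some of its open proper faces adjoined; thus the decomposition is \emph{special}. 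Finally, faces in the patch sense correspond to proper simplicial faces, so every chain $\tau_r\prec\cdots\prec\tau_0=\sigma$ has length at most $\dim\sigma$, which bounds the height of each patch; therefore $\Sigma(M)$ is a genuine patch decomposition, and the desired simultaneous special patch decomposition of $M$ and $\mc{A}$.
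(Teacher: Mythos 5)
Your induction hinges on a relative triangulation theorem: at stage $\alpha$ you glue the previously fixed triangulations into a triangulation of $C_\alpha$ and then extend it, \emph{without subdividing it}, to a triangulation of $M_\alpha$. You call this ``exactly the relative, simultaneous form of the triangulation theorem \ref{triang}'', but no such relative form is stated in this paper, in \cite{LSS}, or in \cite{WSS}: Theorem \ref{triang} only makes the members of a locally finite family into unions of images of open simplices; it does not allow one to prescribe the triangulation on a closed subspace and extend it unchanged. Extending a pregiven definable triangulation of a closed definable subspace to the ambient space is a well-known difficulty (of Hauptvermutung type), and it is emphatically \emph{not} the ingredient behind V.1.12 in \cite{WSS} --- the whole point of Knebusch's patch formalism is to avoid it. (The same unproven hypothesis is hidden in your claim that the earlier triangulations ``agree'' on $M_\beta\cap M_{\beta'}=M_\gamma$: that is your extension property, inductively assumed.) So the step you yourself flag as ``the main obstacle'' is a genuine gap, not a citable fact.

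The actual argument (that of V.1.12 in \cite{WSS}, which the present paper transfers to the definable setting without reproducing it) exploits the weakness of the patch axioms: the frontier $\overline{\sigma}\setminus\sigma$ of a patch need \emph{not} be a union of patches, so no compatibility of triangulations across stages is ever needed. At stage $\alpha$ one triangulates $M_\alpha$ \emph{afresh} by Theorem \ref{triang}, compatibly with the traces of $\mc{A}$, with the sets $M_\beta$ ($\beta<\alpha$, finitely many by WD3), and with the finitely many previously created patches (finitely many because $M_\alpha\in\gamma(M)$ and the partial decomposition is piecewise finite); the old patches are merely refined by the new simplices, which is harmless, and one harvests as new patches only the open simplices lying in $M_\alpha\setminus C_\alpha$, keeping the old patches untouched. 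Your specialness argument for $(\overline{\sigma},\sigma)$ then goes through, but your height bound (``every chain has length at most $\dim\sigma$'') does not, since there is no global simplicial complex in which patch faces are simplicial faces. Instead one checks that any patch meeting $M_\alpha$ was created at some stage $\alpha'\le\alpha$ (a stage-$\alpha'$ patch avoids $C_{\alpha'}$, hence avoids $M_\beta$ for $\beta<\alpha'$), so along a chain $\tau_r\prec\cdots\prec\tau_0=\sigma$ the pair (stage, dimension) strictly decreases lexicographically, and all members lie among the finitely many patches meeting the definable set $M_{\alpha}$; this yields the finite height required of a patch decomposition.
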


A \textbf{relative patch decomposition} of a closed pair $(M,A)$ is
a patch decomposition $\Sigma$ of the space $M\setminus A$.
Then we denote by $\Sigma (n)$  the union of all patches of height $n$,
by $M_n$ the union of $A$ and all $\Sigma (m)$ with $m\le n$,
 $M(n)$ the ``direct (generalized) topological sum'' of all closures $\overline{\sigma}$ where $\sigma \in \Sigma(n)$, and $\partial M(n)$ the direct sum of all frontiers $\partial \sigma
 =\overline{\sigma} \setminus \sigma $ of $\sigma \in \Sigma (n)$.
 
By $\psi_n :M(n) \rightarrow M_n$ we denote the union of all inclusions
$\overline{\sigma} \rightarrow M_n$ with $\sigma \in \Sigma (n)$, and by $\phi_n : \partial M(n) \rightarrow M_{n-1}$ the restriction of $\psi_n$, which is called the \textbf{attaching map}.     
Then, since $\phi_n $ is partially proper (cf. VI.2  in \cite{WSS}), we can express $M_n$ as $M(n) \cup_{\phi_n} M_{n-1}$. The space $M_n$ is called
\textbf{n-chunk} and $M(n)$ is called \textbf{n-belt}. So each weakly definable space 
is built up by glueing direct (generalized) topological sums of definable spaces to the earlier constructed spaces in countably many steps.
In particular, definable versions of CW-complexes are among weakly definable spaces
(see below).

A family $(X_{\lambda} )_{\lambda \in \Lambda}$ from $\mathcal{T} (M)$, the class of weakly definable subsets of $M$,  is called \textbf{admissible} if each definable subspace $B$ of $M$ is contained in the union of finitely many elements of the family. (One could call such families ``piecewise essentially finite'' or ``partially essentially finite''.)
Thus definable partitions are exactly the admissible partitions into definable subsets.
 
An \textbf{admissible filtration} of a space $X$ is an admissible increasing sequence of closed subspaces $(X_n )_{n\in \N}$  covering $X$. For example: the sequence $(M_n )_{ n\in \N}$ of chunks of $M$ (for a given patch decomposition) is an admissible filtration of $M$ (cf. VI.2 in \cite{WSS}). 

The next fact is very important in homotopy-theoretic considerations.

\begin{f}[composition of homotopies, cf. V.5.1]\label{comphom}
Let $(C_n)_{n\in \N}$ be an admissible filtration of a space $M$.
Assume $(G_n :M\times [0,1]\rightarrow N)_{n\in \N}$ is a family of homotopies
such that $G_{n+1} (\cdot ,0)=G_{n} (\cdot ,1)$ and $G_{n} $ is constant on $C_n$ .
For any given strictly increasing sequence $0=s_0 <s_1 <s_2 <...$ with all $s_m$ less than 1 there is a homotopy $F:M\times [0,1]\rightarrow N$ such that 
$$F(x,t) = G_{k+1} (x,\frac{t - s_k}{s_{k+1} - s_k}), \mbox{ for }(x,t)\in C_n\times [s_k ,s_{k+1} ],
 0\le k\le n-2,$$ 
  and  $F(x,t)=G_n (x,0)$ for $(x,t)\in C_n \times [s_{n-1},1] $.
\end{f}

\section{Comparison Theorems for weakly definable spaces}

Now, with {patch decompositions} playing the role of triangulations
we get the Comparison Theorems for weakly definable spaces as in \cite{WSS}.

\begin{f}[homotopy extension property, cf. V.2.9]\label{hep}
Let $(M,A)$ be a closed pair of weakly definable spaces over $R$. 
Then $(A\times [0,1])\cup (M\times \{ 0\} )$ 
is a strong deformation retract of $M\times [0,1]$.
In particular, the pair $(M,A)$ has the following Homotopy Extension Property:

for each morphism $g:M\rightarrow Z$ into a weakly definable space
$Z$ and a homotopy $F: A\times [0,1] \rightarrow Z$
with $F_0 =g|A$ there exists a homotopy $G:M\times [0,1] \rightarrow Z$
 with $G_0 =g$ and $G|A\times [0,1] =F$.
\end{f}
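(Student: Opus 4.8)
The plan is to prove the Homotopy Extension Property by first establishing the stronger assertion that $P := (A\times[0,1])\cup(M\times\{0\})$ is a strong deformation retract of $M\times[0,1]$, and then deriving the HEP formally. Indeed, once we have a deformation $\Phi:(M\times[0,1])\times[0,1]\to M\times[0,1]$ fixing $P$ pointwise with $\Phi_1$ a retraction onto $P$, the given data $g:M\to Z$ and $F:A\times[0,1]\to Z$ (which agree on $A\times\{0\}$ because $F_0=g|A$) assemble to a single morphism defined on the closed subspace $P$; precomposing this morphism with $\Phi_1$ yields the desired extension $G$. Verifying that $g$ and $F$ really glue to a morphism on $P$, and that $\Phi_1$ is a morphism, is where the weakly definable structure has to be used, not merely the underlying topology.

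To build the deformation $\Phi$ I would use a relative patch decomposition of the closed pair $(M,A)$ --- that is, a \emph{special} patch decomposition $\Sigma$ of $M\setminus A$ furnished by the patch-decomposition existence result (cf. V.1.12) --- together with its chunk filtration $A\subseteq M_0\subseteq M_1\subseteq\cdots$, where $M_n=M(n)\cup_{\phi_n}M_{n-1}$ exhibits the $n$-chunk as the $n$-belt $M(n)=\bigsqcup_{\sigma\in\Sigma(n)}\overline{\sigma}$ glued to $M_{n-1}$ along the partially proper attaching map $\phi_n:\partial M(n)\to M_{n-1}$. The geometric heart of the argument is the elementary, purely definable (indeed $\mathbb{Q}$-definable) fact that for a standard closed simplex the set $(\partial\sigma\times[0,1])\cup(\overline{\sigma}\times\{0\})$ is a strong deformation retract of $\overline{\sigma}\times[0,1]$ via a radial push that is \emph{constant} on that subset. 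Because a special patch decomposition makes each pair $(\overline{\sigma},\sigma)$ isomorphic to a simplex pair, these simplexwise deformations are defined independently on the summands of the belt and are stationary on each $\partial\sigma$; hence they are compatible with the direct-sum structure of $M(n)$ and with $\phi_n$, so they descend across the attaching map and extend by the identity to a stagewise deformation of the appropriate chunk.

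With the stagewise deformations in hand I would splice them into a single homotopy $\Phi$ on $M\times[0,1]$ using the composition-of-homotopies fact (Fact \ref{comphom}). The closed subspaces $C_n:=M_n\times[0,1]$ form an admissible filtration of $M\times[0,1]$, and the stagewise deformations can be arranged to chain up and to be stationary on the chunks they have already settled, so that the hypotheses of Fact \ref{comphom} are met for a fixed sequence $s_k$ increasing to $1$. Piecewise finiteness --- every definable subset of $M$ meets only finitely many patches --- guarantees that on each member of an exhaustion only finitely many stages act nontrivially, so the spliced homotopy $\Phi$ is well defined and its time-one map is the retraction onto $P$. The inductive-limit characterization WD6 then reduces the verification that $\Phi$ (and hence the retraction) is a genuine morphism of weakly definable spaces to the already-settled definable level, where each restriction is visibly a morphism.

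I expect the main obstacle to be precisely this last assembly step: not the geometry of any single simplex, which is trivial, but the verification that the infinitely many simplexwise deformations combine into a bona fide morphism in $\wds(R)$ --- respecting both the admissible coverings of the generalized topology and the structure sheaf --- rather than merely a strongly continuous map of underlying sets. Controlling this is exactly what Fact \ref{comphom}, the partial properness of the attaching maps $\phi_n$, and the inductive-limit property WD6 are for, and keeping each stage stationary on the already-settled chunks (so that the telescoping stabilizes on every definable piece) is the crucial bookkeeping that makes the limit land inside $\wds(R)$.
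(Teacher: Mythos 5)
Your overall architecture is the right one, and in fact it is the route the paper itself intends: the paper offers no independent argument for Fact~\ref{hep}, deferring to V.2.9 of \cite{WSS}, whose proof runs exactly along your lines --- special relative patch decomposition of $(M,A)$, patchwise prism retractions over the belts $M(n)$, descent through the pushouts $M_n=M(n)\cup_{\phi_n}M_{n-1}$ (using partial properness of $\phi_n$ and the glueing statement IV.8.7 of \cite{WSS}), splicing via Fact~\ref{comphom}, and WD6 to certify that the limit is a morphism. But there is one genuine gap, and it sits precisely at the step you dismiss as trivial. In a \emph{special} patch decomposition the pairs $(\overline{\sigma},\sigma)$ are isomorphic to \emph{partially open} simplex pairs: by the paper's definition, $\overline{\sigma}$ is a standard open simplex with only \emph{some} of its open proper faces added, and $\partial\sigma=\overline{\sigma}\setminus\sigma$ is that partial union of faces. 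Unless $M$ is a weak polytope (so that closures of definable sets are complete), $\overline{\sigma}$ is generally \emph{not} a closed simplex --- and Fact~\ref{hep} is asserted for arbitrary closed pairs of weakly definable spaces. Your radial push from a point above the barycenter of a closed simplex does not restrict to these pairs: take $\overline{\sigma}=[0,1)$ with $\partial\sigma=\{0\}$; the ray from $(1/2,2)$ through the point $(0.9,\,0.9)$ of the prism exits through the absent wall $\{1\}\times(0,1]$, so the ``retraction'' takes values outside the space. The correct patchwise lemma is the Delfs--Knebusch retraction for partially open simplices (one projects from above the \emph{missing} part of the boundary, and in general no single center of projection suffices --- e.g.\ a $2$-simplex with two vertices deleted --- so an induction on dimension as in \cite{DK5} and V.2 of \cite{WSS} is needed). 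This is known and fixable, but as written your proof invokes the wrong model pair, and the nontrivial definable geometry of the theorem lives exactly there, not only in the assembly step you flagged.

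Two smaller points of bookkeeping. First, with your choice $C_n:=M_n\times[0,1]$ the hypotheses of Fact~\ref{comphom} fail: the stage that pushes the height-$n$ prisms is by design \emph{not} constant on $M_n\times[0,1]$. You should take $C_n=M_{n-1}\times[0,1]$ (or shift indices), so that each stage is stationary precisely on the chunks already settled; the chunk filtration is admissible either way, so this is harmless once noticed. Second, in descending the patchwise deformations through the attaching maps you implicitly use that taking the product with $[0,1]$ commutes with glueing along $\phi_n$, i.e.\ $M_n\times[0,1]\cong (M(n)\times[0,1])\cup_{\phi_n\times\mathrm{id}}(M_{n-1}\times[0,1])$; this is true here because $[0,1]$ is definable and the glueing is along a partially proper map, but it deserves a line, since it is exactly the kind of statement that can fail for quotients in categories of this sort.
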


\vspace{2mm}
Let $(M,A_1 ,...,A_r )$ and $(N,B_1 ,...,B_r )$ be systems of weakly definable spaces
over $R$ where each $A_i$ is closed in $M$. Let $h: C\rightarrow N $ be a given
morphism from a closed subspace $C$ of $M$ such that $h(C\cap A_i )\subseteq B_i$
for each $i=1,...,r$. Then we have

\begin{thm}[first Comparison Theorem, cf. V.5.2 i)]
 For an elementary extension $R\prec S$ the following map, induced by the ``base field extension'' functor, is a bijection
$$\kappa : [(M,A_1 ,...,A_r ),(N,B_1 ,...,B_r )]^{h} \rightarrow
[(M,A_1 ,...,A_r ),(N,B_1 ,...,B_r )]^{h} (S) .$$
\end{thm}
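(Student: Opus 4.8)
The plan is to mimic the proof of the first Comparison Theorem for regular paracompact locally definable spaces (cf. III.4.2), replacing triangulations by special patch decompositions and the locally finite inductive bookkeeping by the admissible filtration of $M$ by its chunks. First I would fix a simultaneous special patch decomposition of $M$ compatible with $C$ and all the $A_i$ (this exists by the analogue of V.1.12 stated above), giving an admissible filtration $(M_n)_{n\in\N}$ of $M$ by chunks, where $M_n = M(n)\cup_{\phi_n} M_{n-1}$ and the belt $M(n)$ is a direct generalized topological sum of the closures of the height-$n$ patches, hence a regular paracompact locally definable space. Since patches, belts, chunks and attaching maps are assembled from definable pieces, the base field extension commutes with all of this combinatorial data, so $(M_n(S))$ is the corresponding filtration of $M(S)$, and similarly for $N$.

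To prove surjectivity of $\kappa$, I would start from a morphism of systems $\Phi$ from $(M,A_1,\ldots,A_r)(S)$ to $(N,B_1,\ldots,B_r)(S)$ extending $h(S)$, and build a morphism $f$ over $R$ together with a homotopy $f(S)\simeq\Phi$ rel $C(S)$ by induction on $n$. At the inductive step the restriction of $\Phi$ to the belt $M(n)$ is a morphism of a regular paracompact locally definable space over $S$; the first Comparison Theorem in the locally definable setting (cf. III.4.2) descends it to a morphism over $R$ and supplies a homotopy between the two over $S$. Using the attaching map $\phi_n:\partial M(n)\to M_{n-1}$ and the homotopy extension property (Fact \ref{hep}), I would adjust this descent so that it agrees on $\partial M(n)$ with the already constructed $f$ on $M_{n-1}$, thereby extending $f$ over the chunk $M_n$ and extending the connecting homotopy as well, keeping it constant on $C$ and respecting the subspaces $A_i$, $B_i$.

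The separate homotopies produced on the chunks are then assembled into a single homotopy on $M(S)\times[0,1]$ by the composition-of-homotopies fact (Fact \ref{comphom}), applied to the admissible filtration $(M_n)$: each stage is arranged to be constant on the previous chunk, so a strictly increasing reparametrisation of $[0,1]$ glues the stages into one morphism $M(S)\times[0,1]\to N(S)$ exhibiting $f(S)\simeq\Phi$ rel $C(S)$. For injectivity I would use the standard reduction that a homotopy is a morphism out of a cylinder: if $f(S)\simeq g(S)$ rel $C(S)$, then applying the surjectivity argument to the closed pair $\bigl(M\times[0,1],\,(M\times\{0,1\})\cup(C\times[0,1])\bigr)$, with boundary data $f\sqcup g$ on $M\times\{0,1\}$ and the time-constant $h$ on $C\times[0,1]$, produces a homotopy over $R$ between $f$ and $g$.

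The main obstacle is the passage to the countable limit: one must ensure that the chunkwise descents and homotopies, each obtained over $R$ from the locally definable Comparison Theorem, fit together into genuine morphisms and homotopies of \emph{weakly} definable spaces, i.e. that admissibility is preserved across infinitely many gluings. This is exactly where the admissible-filtration formalism, the homotopy extension property (Fact \ref{hep}) and the composition-of-homotopies fact (Fact \ref{comphom}) do the real work; the remaining subtlety is the bookkeeping that keeps every extension constant on $C$ and simultaneously compatible with all the closed subspaces $A_i$, which the simultaneous special patch decomposition secures from the outset.
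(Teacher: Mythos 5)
Your proposal follows essentially the same route as the paper's own proof: the paper establishes this theorem by precisely the chunkwise induction you describe (displayed as the proof of Theorem \ref{3compwd}, which the paper explicitly calls ``a version of the proof of the first Comparison Theorem''), namely a relative patch decomposition of $(M,C)$, descent on each belt $M(n)$ via the locally definable Comparison Theorem, glueing over the pushout, extension of each homotopy over all of $M$ by Fact \ref{hep}, composition of the countably many stages by Fact \ref{comphom}, and the standard reduction of injectivity to surjectivity via the cylinder pair. The only cosmetic differences are that the paper invokes the \emph{relative} form of the locally definable theorem with pregiven boundary map $k_n=h_{n-1}\circ\phi_n$, so agreement on $\partial M(n)$ is automatic and no separate HEP adjustment there is needed, and it reduces to the case $k=0$ rather than carrying the subspaces $A_i$ through a simultaneous decomposition.
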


\begin{thm}[second Comparison Theorem, cf. V.5.2 ii)]
 If $R=\R$ as fields, then the following map  to the topological homotopy sets, induced by the ``forgetful'' functor, is a bijection
$$\lambda :[(M,A_1 ,...,A_r ),(N,B_1 ,...,B_r )]^{h} \rightarrow
[(M,A_1 ,...,A_r ),(N,B_1 ,...,B_r )]^{h}_{top} .$$
\end{thm}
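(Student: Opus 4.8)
The plan is to follow the pattern of the locally definable second Comparison Theorem, using patch decompositions in place of triangulations and the composition of homotopies (Fact~\ref{comphom}) to pass from the finite chunks to the whole inductive limit. First I would fix a simultaneous special patch decomposition of $M$ compatible with the closed subspaces $C, A_1, \dots, A_r$ (the special patch decomposition fact, cf.\ V.1.12), and pass to the associated admissible filtration $(M_n)_{n\in\N}$ by chunks, with $M_{-1} := C$ as the base of the induction and $M_n = M(n)\cup_{\phi_n} M_{n-1}$, where each belt $M(n)$ is a direct (generalized) topological sum of closed definable spaces $\overline{\sigma}$. Since a map out of $M$ (respectively a homotopy $M\times[0,1]\to N$) is a morphism precisely when its restriction to every chunk is one (condition WD6), the whole problem reduces, stage by stage, to the behaviour of morphisms on the definable spaces constituting the belts, where the second Comparison Theorem for $\rplds(R)$ is available (here $R=\R$ as fields, so $R$ is an o-minimal expansion of $\R$ and the hypothesis of that theorem is met).

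For surjectivity of $\lambda$, let $F$ be a continuous morphism of systems extending $h$. I would construct definable morphisms of systems $f_n : M_n \to N$ extending $h$, with $f_n|_{M_{n-1}} = f_{n-1}$, together with a topological homotopy from $f_n$ to $F|_{M_n}$ rel $M_{n-1}$ and rel $C$ that respects the subspaces. The inductive step treats the belt: on each $\overline{\sigma}$, with its relative subsystem $(\overline{\sigma}\cap A_1,\dots,\overline{\sigma}\cap A_r)$ and the already-defined boundary datum $f_{n-1}\circ\phi_n$ on $\partial\overline{\sigma}\subseteq M_{n-1}$, the $\rplds$ comparison theorem supplies a definable approximation homotopic to $F$. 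These approximations on the disjoint summands assemble over the partially proper attaching map $\phi_n$ because they agree with $f_{n-1}$ on $\partial M(n)$, and the homotopy extension property (Fact~\ref{hep}) lets me transport the correcting homotopy so that it is stationary on $M_{n-1}$. Injectivity is handled symmetrically: given two definable system morphisms that are topologically homotopic rel $C$, I would build the connecting definable homotopy chunk by chunk, applying the $\rplds$ comparison theorem on each belt to promote the topological homotopy to a definable one rel the boundary.

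The decisive point is the passage to the limit. Each stage yields a homotopy supported on a finite chunk and arranged to be constant on the previous one; after suitable reindexing so that the $n$-th correcting homotopy is constant on the corresponding member of the filtration, the composition of homotopies (Fact~\ref{comphom}), applied to the admissible filtration $(M_n)_{n\in\N}$, telescopes this countable sequence into a single genuine homotopy $M\times[0,1]\to N$, which by WD6 is a morphism of weakly definable spaces. The same device produces the limiting definable morphism in the surjectivity argument. Hence both the definable map and the definable homotopy exist on all of $M$, and $\lambda$ is a bijection.

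The main obstacle I expect is precisely this assembly: ensuring that the finitely supported corrections at stage $n$ do not disturb what was already fixed on $M_{n-1}$, so that the hypotheses of Fact~\ref{comphom} are met, while simultaneously keeping every intermediate map and homotopy a morphism of the system $(A_1,\dots,A_r)\to(B_1,\dots,B_r)$ and rel $C$. This bookkeeping---choosing the patch decomposition fine enough to be compatible with all the $A_i$ and with $C$, and routing each belt-level application of the $\rplds$ comparison theorem through the correct relative subsystem---is the part requiring care; the individual definable approximations themselves are furnished directly by the already-established definable-level Comparison Theorem.
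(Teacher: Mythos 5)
Your proposal is correct and follows essentially the same route as the paper: the paper obtains this theorem exactly as its proof of the third Comparison Theorem (Theorem~\ref{3compwd}), namely by a relative patch decomposition of $(M,C)$, induction over the chunks $M_n=M(n)\cup_{\phi_n}M_{n-1}$, application of the corresponding locally definable Comparison Theorem on each belt, glueing over the partially proper attaching maps, stationarity on $M_{n-1}$ via the homotopy extension property (Fact~\ref{hep}), and telescoping by composition of homotopies (Fact~\ref{comphom}). The only cosmetic difference is that the paper reduces to surjectivity in the case $k=0$ (injectivity and the system case following formally, e.g.\ by applying surjectivity to $(M\times[0,1],\,M\times\{0,1\}\cup C\times[0,1])$), whereas you carry the subspaces $A_1,\dots,A_r$ and the injectivity argument through the induction directly, which is equivalent bookkeeping.
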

Again, a version of the proof of the first Comparison Theorem (thus a version of the proof of V.5.2 i); we present the proof for the convenience of the reader) gives:
\begin{thm}[third Comparison Theorem]\label{3compwd}
If $R'$ is an o-minimal expansion of $R$, then the following map, induced by the 
``expansion'' functor,  is a bijection
\begin{displaymath} \mu : [(M,A_1 ,...,A_k ), (N,B_1 ,...,B_k ) ]^{h}_{R} \rightarrow
 [(M,A_1 ,...,A_k )_{R'}, (N,B_1 ,...,B_k )_{R'} ]^{h}_{R'}.
 \end{displaymath}
\end{thm}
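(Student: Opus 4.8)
The plan is to run, \emph{mutatis mutandis}, the proof of the first Comparison Theorem for weakly definable spaces (V.5.2 i) in \cite{WSS}), replacing its definable-set input by the expansion case. First I would fix a simultaneous special patch decomposition of $M$ compatible with $C$ and with each $A_i$ (Fact V.1.12 above), producing the admissible filtration of $M$ by chunks $(M_n)_{n\in\N}$ with $M_n=M(n)\cup_{\phi_n}M_{n-1}$, where each belt $M(n)$ is a direct (generalized topological) sum of the closures $\overline{\sigma}$ of the patches of height $n$, hence a regular paracompact locally definable space, and each $\overline{\sigma}$ is definable. Since patches, belts, chunks and attaching maps are built entirely from definable data, and since expansion preserves definability and the generalized topology on definable pieces (Fundamental Example 1 and the expansion rules for locally definable spaces), the expanded space $M_{R'}$ carries the same combinatorial filtration, with $(A_i)_{R'}$, $C_{R'}$ and $(M_n)_{R'}$ matching the given ones.

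The key reduction is to pass from the whole space to the belt level. Because $M$ is the inductive limit of its chunks (condition WD6), a morphism $M\to N$ extending $h$ and respecting the system amounts to a coherent sequence of morphisms out of the chunks, and the incremental datum at stage $n$ is a morphism out of the belt $M(n)$ agreeing, via $\phi_n$, with what is already defined on $M_{n-1}$; homotopies decompose the same way along $M\times[0,1]$. Each belt is a direct sum of definable spaces, and the image of a definable belt piece is definable (a basic property of morphisms of weakly definable spaces, IV.3 of \cite{WSS}), hence contained in a member $N_\alpha$ of an exhaustion of $N$ (IV.3.4 of \cite{WSS}); thus belt-level homotopy data lands in regular paracompact locally definable subspaces, where the third Comparison Theorem for locally definable spaces, Theorem \ref{3compld} (whose definable-set core is the Baro--Otero result via normal triangulations, \cite{BaOt}), already gives a bijection under expansion. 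This is exactly the step where, in the first-Comparison-Theorem argument, the base-field-extension bijection is swapped for the expansion bijection.

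For surjectivity of $\mu$, given a class represented by $G':M_{R'}\to N_{R'}$ with $G'|_{C_{R'}}=h_{R'}$, I would build $F$ over $R$ together with a homotopy $F_{R'}\simeq G'$ rel $C_{R'}$ by induction on $n$. At stage $n$ the belt-level bijection descends $G'|_{M(n)}$ to a morphism over $R$ which is compatible up to homotopy rel $\partial M(n)$ with the part already constructed on $M_{n-1}$; the homotopy extension property (Fact \ref{hep}) turns this into an honest extension to $M_n$, and the correcting homotopy is arranged to be constant on the preceding chunk. The countably many stage homotopies are then assembled into a single homotopy by the composition-of-homotopies lemma (Fact \ref{comphom}) along the admissible filtration $(M_n)$, yielding $F_{R'}\simeq G'$ rel $C_{R'}$. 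Injectivity is proved by the same scheme one dimension higher: a homotopy $(F_0)_{R'}\simeq(F_1)_{R'}$ rel $C_{R'}$ is a morphism on $M_{R'}\times[0,1]$, and descending it belt by belt, extending by Fact \ref{hep} and assembling by Fact \ref{comphom}, produces a homotopy $F_0\simeq F_1$ rel $C$ over $R$.

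The hard part will be the passage from the finite (belt and chunk) level to the whole weakly definable space: one must guarantee that the infinitely many descended morphisms and correcting homotopies glue to a \emph{strictly continuous} morphism for the inductive-limit generalized topology, that is, that admissibility (condition WD6 b)) is preserved at every stage and in the limit. This is precisely what the piecewise-finite character of patch decompositions and the composition-of-homotopies lemma are designed to control, and arranging each stage homotopy to be constant on the preceding chunk is what makes the infinite assembly well defined. Once admissibility is secured, functoriality of expansion together with the belt-level bijections force $\mu$ to be a bijection.
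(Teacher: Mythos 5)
Your proposal is correct and follows essentially the same route as the paper's proof: an induction over a relative patch decomposition of $(M,C)$, descending belt by belt via Theorem \ref{3compld}, extending with the homotopy extension property (Fact \ref{hep}), and composing the stage homotopies with Fact \ref{comphom} along the admissible filtration by chunks. The only cosmetic differences are that the paper reduces at the outset to surjectivity in the case $k=0$ (your injectivity argument ``one dimension higher'' is exactly that standard reduction), and that your explicit remark that belt-level data lands in exhaustion members $N_{\alpha}$, so that Theorem \ref{3compld} applies with a definable target, makes precise a point the paper leaves implicit.
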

\begin{proof}
It suffices to prove the surjectivity, and only the case $k=0$. We have a map $f:M\to N$ 
(over $R'$) extending $h:C\to N$ (over $R$), and we seek for a mapping $g:M\to N$  (over $R$)
such that $g$ is homotopic to $f$ relative to $C$ (the homotopies appearing in this proof are allowed to be over $R'$).

We choose a relative patch decomposition (over $R$) of $(M,C)$, and will construct maps $h_n:M_n\to N$ (over $R$), $f_n:M\to N$ (over $R'$) for $n\geq -1$, and a homotopy $H_n:M\times [0,1]\to N$ relative $M_{n-1}$ such that: $h_{-1}=h$, $h_n|_{M_{n-1}}=h_{n-1}$, $f_{-1}=f$, $f_n|_{M_n}=h_n$, $H_n(\cdot,0)=f_{n-1}$, $H_n(\cdot,1)=f_n$. 
If we do this, we are done:
we have a map $g:M\to N$ with $g|_{M_n}=h_n$ for each $n$. Composing, by Fact \ref{comphom}, the homotopies $(H_n)_{n\geq 0}$ along a sequence $s_n\in [0,1)$ with $s_{-1}=0$, we obtain a homotopy $G:M\times [0,1]\to N$ relative $C$ from $f$ to $g$ as desired.

We start with $h_{-1}=h$, $f_{-1}=f$. Assume that $h_i,f_i,H_i$ are given for $i<n$.
Then we get a pushout diagram over $R$ (see page 149 of \cite{WSS}) and we define:
$$ k_n =h_{n-1}\circ \phi_n :\partial M(n)\to N \mbox{ (over $R$)},$$
$$ u_n= (f_{n-1}|_{M_n})\circ \psi_n :M(n)\to N \mbox{ (over $R'$)}.$$
Notice that $u_n$ extends $k_n$. By the Comparison Theorem for locally definable spaces (Theorem \ref{3compld})
there is a map $v_n:M(n)\to N$  over $R$ extending $k_n$ and a homotopy $F_n:M(n)\times [0,1]\to N$ relative $\partial M(n)$ from $u_n$ to $v_n$. The maps $v_n$ and $h_{n-1}$ combine to a map $h_n:M_n\to N$, with $h_n\circ\psi_n=v_n$ and $h|_{M_{n-1}}=h_{n-1}$.
The map $F_n$ and $M_{n-1}\times [0,1]\ni (x,t)\mapsto h_{n-1}(x)\in N$ combine 
(cf. IV.8.7.ii) in \cite{WSS}) to the homotopy $\tilde{H}_n:M_n\times [0,1]\to N$
relative $M_{n-1}$ from $f_{n-1}|_{M_n}$ to $h_n$. It can be extended (by Fact \ref{hep}) 
to the homotopy $H_n:M\times [0,1]\to N$ with $H_n(\cdot,0)=f_{n-1}$. Put $f_n=H_n(\cdot,1)$.
This finishes the induction step and the proof of the theorem.
\end{proof}

Again, the category of weakly semialgebraic spaces over (the underlying field of) $R$ may be considered  a (not full in general) subcategory of  $\wds(R)$. But see the following important new example:
 
\begin{ew} 
Let $Q$ be the square $[0,1]^2_{R}$. Now form $\widetilde{Q}$ in the following way: for each definable subset $A$ of $Q$  glue $A\times S^1$ to $Q$ by identifying $A\times \{ 1\} $ with $A$.
If there are definable non-semialgebraic sets in $R^2$, then $\widetilde{Q}$ as a weakly definable space is not isomorphic to (an expansion of) a weakly semialgebraic space over $R$.
\end{ew}

\section{Definable CW-complexes}

A \textbf{relative definable CW-complex} $(M,A)$ over $R$  is a relative patch complex $(M,A)$ satisfying the conditions:

(CW1) immediate faces of patches have smaller dimensions than the original patches
in the patch decomposition of $M\setminus A$,

(CW2) for each patch $\sigma \in \Sigma (M,A)$ there is a morphism $\chi_{\sigma} : E_n \rightarrow \overline{\sigma}$ ($E_n$ denotes the unit closed ball of dimension $n$) that maps the open ball isomorphically onto $\sigma$ and the sphere
onto $\partial  \sigma$.
For $A=\emptyset$, we have an \textbf{absolute definable CW-complex} over $R$. 
All definable CW-complexes are weak polytopes (absolute or relative, see V.7, p. 165, in \cite{WSS}). A \textbf{system of definable CW-complexes} is a system of spaces $(M,A_1,...,A_k)$ such that each $A_i$ is a closed subcomplex of the definable CW-complex $M$ (cf. V.7, p. 178, of \cite{WSS}). Such a system is \textbf{decreasing} if $A_i$ is a (closed) subcomplex of $A_{i-1}$ for $i=1,...,k$, where $A_0=M$.
As in the semialgebraic  case, we have the following.

\begin{exam} \label{fact}
Each partially complete object of  $\rplds(R)$ admits a definable CW-complex structure over $R$, since it is isomorphic to a closed (geometric) locally finite simplicial complex. (Compare considerations of II.4 and ii) in Examples V.7.1.)
\end{exam}

 Fact \ref{cores} and Example \ref{fact} give
 
\begin{f} \label{locdefred}
Each object of  $\rplds(R)$ is homotopy equivalent to a definable CW-complex over $R$. Each system $(M,A_1,...,A_k)$ of a regular paracompact locally definable space with closed subspaces is homotopy equivalent to a system of definable CW-complexes.
\end{f}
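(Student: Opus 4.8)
The plan is to chain the two results that immediately precede the statement, since these are exactly the ingredients indicated. First I would dispose of the single-space assertion. Given an object $M$ of $\rplds(R)$, Fact \ref{cores} provides a homotopy equivalence $M \simeq M'$ with $M'$ partially complete, and Example \ref{fact} tells us that such an $M'$ carries a definable CW-complex structure (indeed it is isomorphic to a closed locally finite geometric simplicial complex). By transitivity of homotopy equivalence, $M$ is homotopy equivalent to a definable CW-complex, which settles the first claim with no further work.

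For the system $(M,A_1,\ldots,A_k)$ I would again begin from Fact \ref{cores}, whose system version yields a homotopy equivalence to an analogous system $(M',A_1',\ldots,A_k')$ in which every entry is partially complete and each $A_i'$ is closed in $M'$. The remaining task is to realize this as a \emph{system} of definable CW-complexes, i.e. to make $M'$ a definable CW-complex in which each $A_i'$ is a closed subcomplex. Here I would invoke the triangulation theorem (Theorem \ref{triang}) applied to $M'$ together with the locally finite family $\mathcal{A}=\{A_1',\ldots,A_k'\}$: it produces a simultaneous triangulation $\phi:X\to M'$ by a strictly locally finite geometric simplicial complex $(X,\Sigma(X))$, and because $M'$ is partially complete this complex is closed and locally finite, exactly as in Example \ref{fact}. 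The simplicial structure then induces a definable CW-complex structure on $M'$, each open simplex serving as a patch, so that the CW-axioms (CW1) and (CW2) hold.

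The only point in the system case that is more than a formality is verifying that each $A_i'$ becomes an honest closed \emph{subcomplex} rather than merely a closed subspace; this is precisely what the simultaneous triangulation guarantees, since by Theorem \ref{triang} every member of $\mathcal{A}$ is a union of images of open simplices of $\Sigma(X)$ and hence is cut out by a subcomplex. Thus I do not expect any genuine obstacle beyond correctly transporting the simplicial data into the CW framework, and the desired homotopy equivalence of systems follows by transitivity from the equivalence supplied by Fact \ref{cores}.
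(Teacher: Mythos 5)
Your proposal is correct and takes essentially the same route as the paper, which obtains this fact precisely by combining Fact \ref{cores} (homotopy equivalence to a partially complete space, also for systems) with Example \ref{fact} (partially complete objects carry definable CW-structures via closed locally finite simplicial complexes). Your explicit appeal to the simultaneous triangulation of Theorem \ref{triang} in the system case, making each $A_i'$ a closed subcomplex, merely spells out what is implicit in the paper's one-line derivation, since Example \ref{fact} itself rests on that triangulation.
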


The following version of the Whitehead theorem for definable CW-complexes may be proved like 
its topological analogue (see Theorem 7.5.4 in \cite{maunder}).

\begin{thm}\label{cwwhitehead}
Each weak homotopy equivalence between definable CW-complexes is a homotopy equivalence.
Similar facts hold for any decreasing  systems of definable CW-complexes.
\end{thm}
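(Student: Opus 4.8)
The plan is to transfer the classical proof (Maunder, Theorem 7.5.4 in \cite{maunder}): reduce a weak homotopy equivalence to the inclusion of a subcomplex via the mapping cylinder, and then show that such an inclusion, being a weak equivalence, realizes its domain as a strong deformation retract by a cell-by-cell compression. First I would replace the given morphism $f:M\to N$ by a \emph{cellular} one (homotopic to $f$) using a definable cellular approximation theorem, which holds for definable CW-complexes by the same simplicial/patch methods underlying the triangulation and patch-decomposition theorems. For a cellular $f$ the mapping cylinder $Z(f)$, which exists in $\wds(R)$ by the glueing construction, carries a definable CW-structure in which both $M\times\{0\}$ and $N$ are closed subcomplexes, the relative cells of $(Z(f),M\times\{0\})$ being the cells of $N$ together with one cell $e\times I$ for each cell $e$ of $M$. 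The canonical projection $Z(f)\to N$ is a morphism and a strong deformation retraction, so $f$ is a homotopy equivalence if and only if the inclusion $i:M\cong M\times\{0\}\hookrightarrow Z(f)$ is; and since both $f$ and the projection are weak equivalences, so is $i$.

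It therefore suffices to prove the following \emph{compression} statement: if $(Z,X)$ is a closed pair of definable CW-complexes such that $i:X\hookrightarrow Z$ is a weak homotopy equivalence, then $X$ is a strong deformation retract of $Z$. From the long exact homotopy sequence of the pair $(Z,X)$ (valid in our category by the same formal arguments as in topology, or via the Comparison Theorems \ref{3compld} and \ref{3compwd}) and the hypothesis on $i$, one obtains $\pi_n(Z,X,x_0)=0$ for every $n$ and every basepoint $x_0\in X$. I would then build the deformation retraction by induction over the admissible filtration of $Z$ by chunks $Z_n$ relative to $X$. Assuming the identity of $Z$ has already been deformed, relative to $X$, into a map carrying the chunk $Z_{n-1}$ into $X$, I extend over the $n$-belt: for each $n$-cell $\overline{\sigma}$ with characteristic map $\chi_{\sigma}:E_n\to\overline{\sigma}$ whose boundary already lies in $X$, the vanishing of $\pi_n(Z,X,x_0)$ supplies a homotopy, relative to $\partial\sigma$, pushing $\chi_{\sigma}$ into $X$. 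These homotopies assemble over the belt (a direct generalized-topological sum of closed cells) and extend from $Z_n$ to all of $Z$ by the homotopy extension property, Fact \ref{hep}.

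This produces a sequence of homotopies $G_n:Z\times[0,1]\to Z$, each constant on the preceding chunk, which I splice into a single strong deformation retraction of $Z$ onto $X$ using the composition-of-homotopies result, Fact \ref{comphom}, applied to the filtration by chunks. Hence $i$, and therefore $f$, is a homotopy equivalence. The decreasing-systems assertion follows from the same scheme carried out simultaneously on the filtration $M=A_0\supseteq A_1\supseteq\cdots\supseteq A_k$: one replaces $f$ by a cellular morphism of systems, passes to a compatible tower of mapping cylinders, and compresses from the innermost $A_k$ outward, using the relative homotopy extension property and Fact \ref{comphom} to glue the partial homotopies coherently.

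The main obstacle I expect is the compression lemma itself — organizing the cell-by-cell deformation so that it respects the belt/chunk structure of the weakly definable CW-complex and assembles, across the infinitely many belts, into one well-defined homotopy. Fact \ref{comphom} is precisely what makes this final assembly legitimate, and Fact \ref{hep} is what lets each partial compression be extended; the remaining inputs (definable cellular approximation and the long exact sequence of homotopy groups of a pair) are routine transfers of their topological counterparts, but they must be in place before the induction can run.
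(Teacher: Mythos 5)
Your proposal is correct and takes essentially the same route as the paper, whose proof is precisely the transfer of the classical argument (7.5.2--7.5.4 in \cite{maunder}) via the mapping-cylinder/compression scheme, with the long exact homotopy sequence handled as in \cite{Hu} and the infinite cell-by-cell deformations assembled exactly as you do, by the homotopy extension property (Fact \ref{hep}) and the composition-of-homotopies result (Fact \ref{comphom}). The only cosmetic difference is the systems case, where the paper invokes the definable analogue of V.2.13 in \cite{WSS} instead of your explicit simultaneous compression from $A_k$ outward, but these amount to the same inductive scheme.
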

\begin{proof}
The proof is analogous to the proofs of 7.5.2, 7.5.3 and 7.5.4 in \cite{maunder}.
The argument from the long exact homotopy sequence may be proved like in \cite{Hu} (compare
III.6.1 in \cite{LSS} and V.6.6 in \cite{WSS}). The second part of 
the thesis follows from the definable analogue of V.2.13 in \cite{WSS}.
\end{proof}

Using the above instead of Theorem V.6.10 of \cite{WSS}, we can both 
pass to a reduct and eliminate parameters. 

\begin{thm}[cf. V.7.10 in \cite{WSS}]\label{elimination}
Each  definable CW-complex is homotopy equivalent to an expansion of a base field extension of a semialgebraic CW-complex over $\overline{\mb{Q}}$. Analogous facts hold for decreasing systems of definable CW-complexes.
\end{thm}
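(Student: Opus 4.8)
The plan is to build a semialgebraic CW-complex $K$ over $\overline{\mb{Q}}$ cell by cell so that its base field extension to the real closed field underlying $R$ followed by expansion to $R$ (write $K_R$ for this composite, which is again a definable CW-complex over $R$ since both functors preserve the cell structure) is \emph{weakly} homotopy equivalent to the given definable CW-complex $M$, and then to upgrade this to a genuine homotopy equivalence by the definable Whitehead theorem (Theorem \ref{cwwhitehead}). The two transitions are controlled by the Comparison Theorems. Since the real algebraic numbers $\overline{\mb{Q}}$ form the prime model of the (model complete) theory of real closed fields, $\overline{\mb{Q}}\prec R_0$, where $R_0$ denotes the field reduct of $R$, so the first Comparison Theorem for weakly definable spaces applies; and $R$ is an o-minimal expansion of $R_0$, so the third Comparison Theorem (Theorem \ref{3compwd}) applies. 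Composing the two resulting bijections, the functor $X\mapsto X_R$ induces a bijection $[X,Y]_{\overline{\mb{Q}}}\to[X_R,Y_R]_R$ on homotopy sets of weak polytopes over $\overline{\mb{Q}}$. Note that the Comparison Theorems give only fullness and faithfulness of this functor; the content of the present theorem is essential surjectivity up to homotopy, which must be produced by an explicit construction.

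First I would fix a CW/patch decomposition of $M$ with chunks $\emptyset=M_{-1}\subseteq M_0\subseteq M_1\subseteq\dots$, where $M_n=M(n)\cup_{\phi_n}M_{n-1}$ and the belt $M(n)=\bigsqcup_\sigma\overline{\sigma}$ is a direct topological sum of cell closures, each isomorphic to a standard ball $E_{d(\sigma)}$ by (CW2). Since balls and their boundary spheres are already $0$-definable, the belt descends trivially: putting $K(n)=\bigsqcup_\sigma E_{d(\sigma)}$ over $\overline{\mb{Q}}$ we have $K(n)_R=M(n)$ and $(\partial K(n))_R=\partial M(n)$, so all the arithmetic content sits in the attaching maps. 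I would then construct inductively semialgebraic subcomplexes $K_n$ over $\overline{\mb{Q}}$ together with homotopy equivalences $g_n\colon(K_n)_R\to M_n$. Given $g_{n-1}$ with homotopy inverse $g_{n-1}^{-1}$, the composite $g_{n-1}^{-1}\circ\phi_n\colon(\partial K(n))_R\to(K_{n-1})_R$ is a morphism over $R$; by the bijection above its homotopy class descends to the class of an attaching map $\psi_n\colon\partial K(n)\to K_{n-1}$ over $\overline{\mb{Q}}$. Setting $K_n=K(n)\cup_{\psi_n}K_{n-1}$, and using that $g_{n-1}\circ(\psi_n)_R\simeq\phi_n$, the homotopy extension property (Fact \ref{hep}) together with the gluing of morphisms and homotopies (IV.8.7 in \cite{WSS}) lets me extend $g_{n-1}$ over the attached cells to $g_n$; since $g_{n-1}$ is a homotopy equivalence and the cells are attached by the matching homeomorphisms, $g_n$ is again a homotopy equivalence.

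Finally I would pass to the limit. The chunks form an admissible filtration, so by Fact \ref{comphom} the homotopies witnessing the compatibility of the $g_n$ compose to a single morphism $g\colon K_R\to M$ restricting up to homotopy to $g_n$ on each $M_n$. Because every map of a sphere into $M$ (or into $K_R$) has complete, hence small, image contained in some chunk, the homotopy groups of $M$ and of $K_R$ are the colimits of those of the chunks; as each $g_n$ is a homotopy equivalence, $g$ induces isomorphisms on all $\pi_m$ and a bijection on $\pi_0$, i.e. $g$ is a weak homotopy equivalence between definable CW-complexes. The Whitehead theorem (Theorem \ref{cwwhitehead}) then makes $g$ a homotopy equivalence, so $M\simeq K_R$ as required. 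For the decreasing-systems statement I would run the same induction with a decomposition of $M$ compatible with $A_1\supseteq\dots\supseteq A_k$, descending the cells lying in the deepest $A_i$ first; this yields semialgebraic subcomplexes $B_i\subseteq K$ over $\overline{\mb{Q}}$ and a weak homotopy equivalence of systems, to which the system version of Theorem \ref{cwwhitehead} applies.

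I expect the main obstacle to be the limit step: assembling the chunkwise equivalences $g_n$ into one weak homotopy equivalence while keeping track of which data live over $\overline{\mb{Q}}$, over $R_0$, and over $R$. This is precisely the bookkeeping already carried out in the proof of the third Comparison Theorem (Theorem \ref{3compwd}), and it rests on the homotopy extension property, the gluing lemmas, and the composition of homotopies (Fact \ref{comphom}); a secondary point requiring care is verifying that the composite of the first and third Comparison Theorems genuinely yields the bijection on homotopy sets used to descend each $\phi_n$.
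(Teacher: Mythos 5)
Your proposal is correct and follows essentially the same route as the paper: the paper's proof simply defers to the skeleton-by-skeleton construction of V.7.10 in \cite{WSS}, with attaching maps descended via the Comparison Theorems (here the composite of the first, using $\overline{\mb{Q}}\prec R_0$, with the third, Theorem \ref{3compwd}), and your use of Theorem \ref{cwwhitehead} at the limit step is exactly the paper's stated replacement of the Strong Whitehead Theorem V.6.10, legitimate because only decreasing systems of definable CW-complexes are involved. The chunkwise details you supply (HEP, gluing via IV.8.7 of \cite{WSS}, composition of homotopies by Fact \ref{comphom}, and the colimit argument for homotopy groups over the admissible filtration) are precisely the ingredients of that construction.
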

\begin{proof}
This follows from the reasoning with relative CW-complexes analogous to the proof of V.7.10 in \cite{WSS} (instead of the case of an elementary extension of real closed fields, we have the case of an o-minimal expansion of a real closed field).
The construction of the desired relative CW-complex ``skeleton by skeleton'' is similar.
Since we are dealing only with decreasing systems of definable CW-complexes, the use of V.6.10 of \cite{WSS} (whose role is the transition from finite unions to any unions)  may be replaced with the use of Theorem \ref{cwwhitehead}. 
\end{proof}

Moreover, combining the above with the Comparison Theorems gives
an extension of Remarks VI.1.3 of \cite{WSS}. 

\begin{cor}\label{equi}
The homotopy categories of: topological CW-complexes, semialgebraic CW-complexes over (the underlying field of) $R$, and definable CW-complexes over $R$ are equivalent.
Similar facts hold for decreasing systems of CW-complexes.
\end{cor}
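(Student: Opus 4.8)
The plan is to route all three homotopy categories through the intermediate category of semialgebraic CW-complexes. I would write $K$ for the underlying real closed field of $R$ taken with its pure ordered field structure, so that $R$ is an o-minimal expansion of $K$ and a semialgebraic CW-complex over $K$ is the same as a definable CW-complex over the ordered-field reduct of $R$. It then suffices to establish two equivalences of homotopy categories and compose them: first, that the ``expansion'' functor $E$ from semialgebraic CW-complexes over $K$ to definable CW-complexes over $R$ is an equivalence; and second, that topological CW-complexes form a homotopy category equivalent to that of semialgebraic CW-complexes over $K$. For the second I would simply invoke Remarks VI.1.3 of \cite{WSS} (the purely semialgebraic statement, obtained from the semialgebraic Comparison Theorems and the parameter reduction V.7.10 of \cite{WSS}: one reduces semialgebraic CW-complexes over $K$ to ones over $\overline{\mb{Q}}$ along $\overline{\mb{Q}}\prec K$, then base field extends along $\overline{\mb{Q}}\prec\R$ and applies the forgetful functor over $\R$, with essential surjectivity resting on the classical fact that every topological CW-complex is homotopy equivalent to a simplicial complex, which carries a semialgebraic structure over $\overline{\mb{Q}}$).

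First I would check that $E$ is full and faithful. On the homotopy categories $E$ sends a homotopy class of morphisms to the class of its expansion, which is well defined because homotopies expand as well; applying the third Comparison Theorem (Theorem \ref{3compwd}) with $K$ and $R$ in the roles of $R$ and $R'$ then gives, for every (system of) semialgebraic CW-complexes over $K$, a bijection on the relevant homotopy sets, which is exactly fullness and faithfulness of $E$. Next I would verify essential surjectivity using Theorem \ref{elimination}: every definable CW-complex over $R$ is homotopy equivalent to an expansion of a base field extension of some semialgebraic CW-complex $P$ over $\overline{\mb{Q}}$; since $\overline{\mb{Q}}\subseteq K$, the base field extension of $P$ to $K$ is a semialgebraic CW-complex over $K$, and its expansion to $R$ lies in the image of $E$. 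As a fully faithful, essentially surjective functor, $E$ is then an equivalence of homotopy categories.

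Composing $E$ with the equivalence from Remarks VI.1.3 would yield the asserted equivalence of the three homotopy categories. For decreasing systems I would rerun the same argument verbatim, invoking the decreasing-systems parts of Theorems \ref{3compwd} and \ref{elimination} and the decreasing-systems version of VI.1.3. The hard part is not the formal categorical assembly but essential surjectivity, namely reducing an arbitrary definable (or topological) CW-complex, up to homotopy equivalence, to a common semialgebraic model defined over $\overline{\mb{Q}}$; this is precisely where Theorem \ref{elimination} (and behind it the definable Whitehead theorem \ref{cwwhitehead}) does the real work, whereas fullness and faithfulness are handed over directly by the Comparison Theorems.
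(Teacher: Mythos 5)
Your proposal is correct and follows essentially the same route as the paper, which derives Corollary \ref{equi} precisely by combining Theorem \ref{elimination} with the Comparison Theorems to extend Remarks VI.1.3 of \cite{WSS}. You have merely made explicit the categorical bookkeeping (fullness and faithfulness of the expansion functor via Theorem \ref{3compwd}, essential surjectivity via Theorem \ref{elimination}, and the semialgebraic--topological comparison through $\overline{\mb{Q}}$) that the paper leaves implicit in its one-sentence indication.
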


\section{The case of  bounded o-minimal theories}

Let $T$ be an  o-minimal complete theory extending RCF. We may assume that the theory is already Skolemized, so every 0-definable function is in the language and  $T$ has quantifier elimination. We can build models of $T$ using the \textit{definable closure} operation in some huge model (or, equivalently, using the notion of a \textit{generated substructure} of a huge model for the  chosen rich language).  
Taking a ``primitive extension''  generated by a single element $t$ over a model $R$ gives a new model $R\langle t\rangle$ of $T$ determined up to isomorphism by the type this single element realizes over the former model
$R$.  

Such a $T$ will be called \textbf{bounded} if the model $P\langle t\rangle$ has countable
cofinality, where $P$ is the prime model of $T$ and $t$ realizes $+\infty$ over $P$. 
This condition can be expressed in the following words: there is a (countable) sequence of 0-definable unary functions that is cofinal in the set of all 0-definable unary functions at $+\infty$ (this property does not depend on a model of $T$). In particular,
polynomially bounded theories are bounded.
Notice that $P\langle t\rangle$ is cofinal in $R\langle t\rangle$, for any model $R$ of $T$, if $t$ realizes $+\infty$ over $R$. 
 
Each bounded theory $T$ has the following property:  each model $R$ has an elementary extension $S$ such that both $S$ and its ``primitive extension''
$S\langle t\rangle$, with $t$ realizing $+\infty $ over $S$, have countable cofinality. (Take $S=R\langle t_1 \rangle$, with $t_1$ realizing $+\infty$ over $R$). This allows, by the first Comparison Theorem, to extend many facts about weakly definable spaces over ``nice'' models to spaces over any model of $T$.

The following example may be extracted from the proof of Theorem IV.9.2 in \cite{WSS}. It shows the importance of the boundedness assumption.
(The role of the boundedness assumption may be also seen by considering Example IV.9.12 in \cite{WSS}.)

\begin{exam}
Consider the closed $m$-dimensional simplex with one open proper face removed ($m\geq 2$), call this set $A$, as a definable subset of $R^{m+1}$.
 We want to introduce a partially complete space on the same set $A$.
If $R$ and $R\langle t\rangle$ have countable cofinality, then we can find a sequence of internal points tending to the barycenter of the removed face, and we can use a ``cofinal at $0_{+}$'' sequence of unary functions tending (even uniformly) to the zero function to produce an increasing sequence $(P_n)_{n\in \mb{N}}$ of polytopes covering our set $A$ and such that any polytope contained in $A$ is contained in some $P_n$. Then $(P_n)_{n\in \mb{N}}$ is an exhaustion of a weak polytope with the underlying set $A$. The old space and the new space on $A$ have the same polytopes.(Compare the proof of Theorem IV.9.2.) A similar construction can be made if several open proper faces are removed.
\end{exam}

By the reasoning similar to that of V.7.8, we get

\begin{thm}[CW-approximation, cf. V.7.14] \label{CWappr} If $T$ is bounded, then each decreasing system of weakly definable spaces $(M_0 ,...,M_r )$ over $R$ has a  CW-approximation  (that is a morphism $\phi : (P_0 ,...,P_r ) \rightarrow (M_0 ,...,M_r )$ from a decreasing system of definable CW-complexes over $R$ that is a homotopy equivalence of systems of spaces).
\end{thm}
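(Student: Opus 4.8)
The plan is to adapt the skeleton-by-skeleton construction behind V.7.8 and V.7.14 of \cite{WSS} to the o-minimal setting, building the approximating system of complexes one cell-dimension at a time and letting the boundedness of $T$ guarantee that the resulting inductive limit is a genuine definable CW-complex.

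First I would reduce to a convenient model. Since $T$ is bounded, any $R$ admits an elementary extension $S$ for which both $S$ and $S\langle t\rangle$ (with $t$ realizing $+\infty$) have countable cofinality, and by the first Comparison Theorem the base-field-extension functor is a bijection on the relevant homotopy sets of systems. Thus it is enough to produce the approximation over such a nice $S$ and transport the conclusion back to $R$. Countable cofinality is precisely what is needed in the cell-attaching step: it lets one exhaust each attached belt by an increasing sequence of polytopes, exactly as in the Example preceding the theorem, so that the skeleta glue to a weak polytope rather than to a merely weakly definable space.

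Next comes the construction proper, carried out for the decreasing system from the innermost space $M_r$ outward. One first builds an absolute CW-approximation $\phi_r : P_r \to M_r$ by the usual Whitehead procedure: inductively over $n$, attach definable $n$-cells to hit generators of $\pi_n$ and definable $(n+1)$-cells to kill the kernel created in $\pi_n$, so that $\phi_r$ induces isomorphisms on all $\pi_i$. The point making this definable is that, after triangulation and by the Comparison Theorems, every homotopy class is represented by a definable morphism out of a standard sphere or cell, and the attaching maps may be chosen definable. One then passes outward by relative CW-approximation: assuming $\phi_{i+1} : P_{i+1} \to M_{i+1}$ built, one constructs $P_i \supseteq P_{i+1}$ and an extension $\phi_i : P_i \to M_i$ of $\phi_{i+1}$ that is again a weak equivalence, using the Homotopy Extension Property (Fact \ref{hep}) to carry the already-built part along. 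This yields the nested complexes $P_r \subseteq \cdots \subseteq P_0$ and the morphism of systems.

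Finally I would assemble and upgrade. The partial homotopies produced at the successive dimensions are glued into a single homotopy by the composition-of-homotopies device (Fact \ref{comphom}) along an admissible filtration; boundedness is what makes this filtration admissible and the union $\bigcup_n P_i^{(n)}$ a definable CW-complex. By construction $\phi$ induces isomorphisms on all homotopy groups of the system, that is, it is a weak homotopy equivalence of systems. To promote this to an honest homotopy equivalence of systems I would invoke the definable Whitehead theorem (Theorem \ref{cwwhitehead}): since the source is a decreasing system of definable CW-complexes and, relying on the fact (as in \cite{WSS}) that each weakly definable space has the homotopy type of a definable CW-complex, the target also has definable-CW homotopy type, the weak equivalence is a homotopy equivalence. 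I expect the main obstacle to lie exactly here — in verifying that the inductively attached belts admit exhaustions by polytopes so that the limit is a bona fide definable CW-complex with an admissible filtration; this is the step that genuinely requires $T$ to be bounded, and it is where the transfer between $R$ and its countably-cofinal elementary extension via the first Comparison Theorem must be made precise.
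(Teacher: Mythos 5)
Your overall scaffolding (passing to a countably cofinal elementary extension via the first Comparison Theorem, inductive construction over the system, gluing the partial homotopies by Fact \ref{comphom}) is consistent with the paper, but the final step of your argument is circular, and it is exactly the step where the real content lies. You promote the weak homotopy equivalence $\phi$ to a homotopy equivalence by invoking Theorem \ref{cwwhitehead} together with ``the fact that each weakly definable space has the homotopy type of a definable CW-complex'' --- but that fact \emph{is} Theorem \ref{CWappr}, the statement being proved. Theorem \ref{cwwhitehead} applies only when both source and target are definable CW-complexes, whereas the targets $M_i$ here are arbitrary weakly definable spaces. What the paper uses instead is the Strong Whitehead Theorem (the analogue of V.6.10 of \cite{WSS}): a weak homotopy equivalence between \emph{arbitrary} weakly definable spaces is a homotopy equivalence. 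That theorem is not a consequence of the CW Whitehead theorem; its proof requires the machinery of IV.9--10 of \cite{WSS}, namely the partially complete core $P(M)$ of a weakly definable space and the partially proper core $p_f$ of a morphism, together with V.4.7 and V.4.13. Your proposal never constructs these cores, yet they are also where boundedness genuinely enters: a partially complete core only makes sense when $R$ has countable cofinality, which is why the bounded hypothesis (combined with the first Comparison Theorem) is needed --- not, as you suggest, primarily to make the filtration of $\bigcup_n P_i^{(n)}$ admissible. Indeed, the paper's Open Problem 1 asks precisely whether the Strong Whitehead Theorem can be proved without the methods of IV.9--10, so your shortcut is not a known simplification but an unproved assumption.

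A secondary remark: the paper's route (following V.7.8 and V.7.14 of \cite{WSS}) does not build $P$ by the classical attach-cells-to-kill-homotopy-groups procedure; it replaces $M$ up to homotopy equivalence by its partially complete core (a weak polytope, as in the Example preceding the theorem) and then works with patch decompositions, approximating chunk by chunk. Your cell-attaching construction could plausibly produce a weak homotopy equivalence out of a definable CW-complex, but since the upgrade to a homotopy equivalence still requires the Strong Whitehead Theorem, it does not let you bypass the cores; and your descent from the nice model $S$ back to $R$ also needs more care than a one-line appeal to the first Comparison Theorem, since the complex produced over $S$ must be replaced by one defined over $R$ (compare the parameter-elimination argument of Theorem \ref{elimination}).
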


The methods to obtain this theorem include the use (as in  IV.9-10 of \cite{WSS}) of a so called \textbf{partially complete core} $P(M)$ \textbf{of a weakly definable space} $M$, which is an analogue and generalization of the localization $M_{loc}$ for locally complete paracompact
locally definable spaces $M$, and  a \textbf{partially proper core} $p_{f}$ \textbf{of  a morphism} $f:M\rightarrow N$ of weakly definable spaces. 
(Note that it is sensible to ask for a partially complete core only if $R$ has countable cofinality.)
In particular,  the Strong Whitehead Theorem (cf. V.6.10), proved by methods of  IV.9-10 and V.4.7, V.4.13 in \cite{WSS}, guarantees the extension of relevant results to weakly definable spaces.
Thus the homotopy category of decreasing systems of weakly definable spaces over $R$ is
equivalent to its full (homotopy) subcategory of decreasing systems of  definable CW-complexes over $R$ (one uses an analogue of Theorem V.2.13 in \cite{WSS}).

The following corollary is an extension of Corollary \ref{equi} in the bounded case.

\begin{cor}\label{concwd}
If $T$ is bounded, then the homotopy categories of 
weakly definable spaces (over any model $R$ of $T$) and of
topological, semialgebraic and definable CW-complexes are  all equivalent. 
Similarly for decreasing systems  of spaces.
\end{cor}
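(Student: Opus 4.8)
The plan is to realize all four homotopy categories as equivalent by routing them through the homotopy category of definable CW-complexes over $R$, which serves as a common hub. The two ingredients needed are already available: Corollary \ref{equi} identifies the homotopy categories of topological, semialgebraic (over the underlying field of $R$), and definable CW-complexes over $R$, while the discussion following Theorem \ref{CWappr} identifies the homotopy category of $\wds(R)$ with its full homotopy subcategory of definable CW-complexes. Composing these identifications will give the claim, so the proof is essentially a transitivity argument for equivalences of categories.

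More precisely, I would first show that the inclusion functor $\iota$ from definable CW-complexes over $R$ into $\wds(R)$ descends to an equivalence of homotopy categories. Definable CW-complexes are weak polytopes, hence objects of $\wds(R)$, and since morphisms of definable CW-complexes are simply their morphisms as weakly definable spaces, they form a \emph{full} subcategory; consequently the induced map on homotopy classes $[\,P,Q\,]$ between two definable CW-complexes is the identity, so $\iota$ is fully faithful at the homotopy level. For essential surjectivity I would invoke the CW-approximation Theorem \ref{CWappr} in the case $r=0$: boundedness of $T$ guarantees that every weakly definable space $M$ over $R$ admits a morphism $\phi\colon P\to M$ from a definable CW-complex that is a homotopy equivalence, so $M\simeq P$ in the homotopy category. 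Hence $\iota$ is an equivalence.

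I would then compose the equivalences: by Corollary \ref{equi} the homotopy category of definable CW-complexes over $R$ is equivalent to those of semialgebraic CW-complexes over the field of $R$ and of topological CW-complexes, and by the previous step it is equivalent to the homotopy category of $\wds(R)$. Since equivalence of categories is transitive, all four homotopy categories are pairwise equivalent. The decreasing-systems statement follows by the same argument read in systems form: Theorem \ref{CWappr} already produces, for each decreasing system $(M_0,\dots,M_r)$ of weakly definable spaces, a CW-approximation $\phi\colon(P_0,\dots,P_r)\to(M_0,\dots,M_r)$ by a decreasing system of definable CW-complexes that is a homotopy equivalence of systems, and Corollary \ref{equi} already records the corresponding equivalences for decreasing systems of CW-complexes, so the hub argument applies verbatim.

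I expect the only genuine obstacle to be the essential-surjectivity step, since it is precisely there that boundedness is indispensable (it is needed to build the partially complete core $P(M)$ and hence the CW-approximation). However, that difficulty has already been absorbed into Theorem \ref{CWappr}, so at the level of this corollary the work reduces to assembling established equivalences; the one point meriting a sentence of care is that the systems version of the approximation respects the decreasing filtration, which is built into the statement of Theorem \ref{CWappr} and therefore needs no separate verification.
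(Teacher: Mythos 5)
Your proposal is correct and takes essentially the same route as the paper: the paragraph preceding the corollary establishes (via Theorem \ref{CWappr}, with boundedness entering exactly where you say, through the partially complete core) that the homotopy category of (decreasing systems of) weakly definable spaces over $R$ is equivalent to its full homotopy subcategory of (systems of) definable CW-complexes, and the corollary is then obtained by composing this with Corollary \ref{equi}, just as in your hub argument. Your treatment of full faithfulness and of the decreasing-systems case matches the paper's implicit reasoning.
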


Still the homotopy category of $\wds(R)$ may possibly be reacher in the non-bounded case.  

\section{Generalized homology and cohomology theories} 

Now we have the  operation of taking the (reduced) suspension $SM=S^1 \wedge M$ on the category of pointed weak polytopes $\mathcal{P}^{*}(R)$ over $R$, and on its homotopy category $H\mathcal{P}^{*}(R)$ (cf. VI.1 in \cite{WSS}).
This allows to define analogues of so called \textit{complete generalized homology
 and cohomology theories}, known from the usual homotopy theory, 
just as in VI.2 and VI.3 of \cite{WSS}.
(Such theories do not necessarily satisfy the \textit{dimension axiom}.) 
Denote the category of abelian groups by $Ab$.
 For a pair $(M,A)$ of pointed weak polytopes,  $M/A$ will denote the quotient space of $M$ by a closed space $A$, with the distinguished point being the point obtained from $A$.

A \textbf{reduced cohomology theory} $k^{*}$ over $R$ is a sequence 
$(k^n)_{n\in \mathbb{Z}}$ of contravariant functors
$k^n :H\mathcal{P}^{*} (R)\rightarrow Ab$ together with natural equivalences $s^n : k^{n+1}\circ S \leftrightsquigarrow k^n$ such that the following hold:

\textbf{Exactness axiom}

For each $n\in \mathbb{Z}$ and each pair of pointed weak polytopes $(M,A)$
the sequence
$$ k^n (M/A) \stackrel{p^{*}}{\rightarrow } k^n (M) \stackrel{i^{*}}{\rightarrow}
k^n (A) $$
is exact.

\textbf{Wedge Axiom}

For each $n\in \mathbb{Z}$ and each family $(M_{\lambda})_{\lambda \in  \Lambda}$
of pointed weak polytopes the mapping
$$ (i_{\lambda} )^{*} : k^n (\bigvee_{\lambda} M_{\lambda}) \rightarrow 
\prod_{\lambda}  k^n (M)     $$
is an isomorphism.

A \textbf{reduced homology theory} $h_*$ over $R$ is a sequence $(h_n)_{n\in \mathbb{Z}}$ of covariant functors
$h_n :H\mathcal{P}^* (R)\rightarrow Ab$ together with natural equivalences $s_n : h_n \leftrightsquigarrow h_{n+1} \circ S$ such that the following hold:

\textbf{Exactness axiom}

For each $n\in \mathbb{Z}$ and each pair of pointed weak polytopes $(M,A)$
the sequence
$$ h_n (A) \stackrel{i_{*}}{\rightarrow } h_n (M) \stackrel{p_{*}}{\rightarrow}
h_n (M/A) $$
is exact.

\textbf{Wedge Axiom}

For each $n\in \mathbb{Z}$ and each family $(M_{\lambda})_{\lambda \in\Lambda}$
of pointed weak polytopes the mapping
$$ (i_{\lambda} )_{*} : \bigoplus_{\lambda} h_n (M_{\lambda} )\rightarrow h_n (\bigvee_{\lambda} M_{\lambda})  $$
is an isomorphism.

\vspace{2mm}
If $T$ is bounded, then  these theories correspond uniquely (up to an isomorphism) to topological theories (cf.  VI.2.12 and VI.3  in \cite{WSS}).
All these generalized homology and cohomology functors
can be built by using spectra for homology theories, or $\Omega$-spectra for cohomology theories as in  VI.8 of \cite{WSS}.

Similarly, \textit{unreduced} generalized \textit{homology} and \textit{cohomology} theories may be considered on the category $H\mc{P}(2,R)$ of pairs of weak polytopes. 
If $T$ is bounded, then these theories are equivalent to respective reduced theories, cf. VI.4 in \cite{WSS}; homology theories are extendable to $H\wds(2,R)$, and some difficulties appear for cohomology theories, cf. VI.5-6 in \cite{WSS}.  We get the following extension of Corollaries \ref{equi} and \ref{concwd}.
 
\begin{cor}\label{conc}
If $T$ is bounded, then, by the equivalence of respective homotopy categories of  topological pointed CW-complexes (with continuous mappings) and of pointed weak polytopes, we get ``the same'' generalized homology and cohomology theories as the classical ones, known from the usual topological homotopy theory.
\end{cor}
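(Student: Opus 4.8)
The plan is to deduce the statement formally from the equivalence of homotopy categories already established, exploiting the fact that a generalized (co)homology theory is merely a sequence of functors on a homotopy category compatible with suspension, and that such data transports along any equivalence respecting suspension, quotients and wedges. First I would fix the equivalence: regarding a pointed space $(M,x_{0})$ as the decreasing system $(M,\{x_{0}\})$, Corollary \ref{concwd} (together with Corollary \ref{equi}) furnishes, for bounded $T$, an equivalence $\Phi:H\mathcal{P}^{*}(R)\to H\mathbf{CW}^{*}$ from the homotopy category of pointed weak polytopes to that of pointed topological CW-complexes, with a quasi-inverse $\Psi$. By its construction (Theorem \ref{CWappr} produces a definable CW-approximation, and Corollary \ref{equi} identifies definable CW-complexes with topological ones) $\Phi$ is realized on a definable CW-complex by its underlying topological CW-complex, so it manifestly respects the cellular structure.

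Next I would record the structural compatibilities of $\Phi$ demanded by the axioms and then transport theories. The suspension $SM=S^{1}\wedge M$ is built from the smash product, which on definable CW-complexes agrees combinatorially with its topological counterpart; hence there is a natural equivalence $\eta:\Phi\circ S\leftrightsquigarrow S\circ\Phi$. For a closed pair $(M,A)$ the quotient $M/A$ is sent by $\Phi$ to the topological quotient, so the cofiber sequence $A\to M\to M/A$ is preserved; and since an infinite wedge of weak polytopes is again a weak polytope, $\Phi$ carries $\bigvee_{\lambda}M_{\lambda}$ to the topological wedge, preserving these coproducts. Granting these, given a topological reduced cohomology theory $(k^{n}_{top},s^{n}_{top})$ I set $k^{n}:=k^{n}_{top}\circ\Phi$ and transport the suspension equivalence by conjugating $s^{n}_{top}$ with $\eta$, obtaining $s^{n}:k^{n+1}\circ S\leftrightsquigarrow k^{n}$; the Exactness axiom then holds because $\Phi$ preserves $A\to M\to M/A$, and the Wedge axiom because $\Phi$ preserves wedges while $\prod_{\lambda}$ is computed identically. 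The homology case is identical with $\bigoplus_{\lambda}$ replacing $\prod_{\lambda}$ and the variance reversed. Using $\Psi$ one reverses the construction, and the isomorphisms $\Phi\Psi\cong\mathrm{id}$, $\Psi\Phi\cong\mathrm{id}$ show the two assignments are mutually inverse up to isomorphism of theories, which is precisely the asserted ``sameness''; the same argument over decreasing systems (again via Corollary \ref{concwd}) yields the unreduced and relative versions.

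The hard part, and the only step needing genuine verification rather than formal transport, is establishing that $\Phi$ is compatible with suspension and with arbitrary, possibly infinite, wedges. Compatibility with suspension reduces to identifying $S^{1}\wedge-$ on definable CW-complexes with its topological counterpart, and compatibility with infinite wedges reduces to infinite wedges of weak polytopes being weak polytopes together with $\Phi$ commuting with these coproducts. Boundedness of $T$ enters precisely here, through the CW-approximation Theorem \ref{CWappr}: it guarantees that every weak polytope admits a definable CW-approximation, so that $\Phi$ is defined on all of $H\mathcal{P}^{*}(R)$ and the transport above applies uniformly.
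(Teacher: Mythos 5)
Your proposal is correct and takes essentially the same route as the paper, which likewise obtains the corollary by transporting reduced homology and cohomology theories along the equivalence of homotopy categories supplied by CW-approximation (Theorem \ref{CWappr}) together with Corollaries \ref{equi} and \ref{concwd}, delegating the compatibility checks for suspension, cofiber sequences and wedges to VI.2--VI.3 and VI.8 of \cite{WSS}. One small caveat: for $R$ not expanding $\mathbb{R}$ your parenthetical description of $\Phi$ as ``the underlying topological CW-complex'' is not literally available (the underlying topological space is then totally disconnected), and the functor must instead be routed through Theorem \ref{elimination} and the Comparison Theorems --- which is exactly what Corollary \ref{equi} does, so this does not affect the substance of your argument.
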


\section{Open problems}
The following problems are still open:

1) Can the assumption of boundedness of $T$ in Theorem \ref{CWappr} and later be omitted?
Is there a way of proving the Strong Whitehead Theorem (the analogue of V.6.10) without methods of IV.9-10 
of \cite{WSS}?


2) Do the above consideratons lead to a ``(closed) model category'' (see \cite{h}, page 109, for the definition)? Such categories are desired in (abstract) homotopy theory.

\vspace{5mm}
\textbf{Acknowledgements.}
I acknowledge support from the european Research Training Network RAAG (HPNR-CT-2001-00271)
that gave me the opportunity to speak about locally and weakly definable spaces in Perugia (Italy) in 2004 and in Passau (Germany) in 2005. The organizers of the semester
``Model Theory and Applications to Algebra and Analysis''  
held in the Isaac Newton Institute in Cambridge (UK) gave me the opportunity to present a poster during the workshop ``An Introduction to Recent Applications of
Model Theory'', 29 March -- 8 April 2005, supported by the European Commission (MSCF-CT-2003-503674).
I thank Alessandro Berarducci, Margarita Otero, and Kobi Peterzil
for pointing out some mistakes in the previous version of this paper during the 
workshop ``Around o-minimality'' organized by Anand Pillay, held in Leeds, in March 2006.
Some part of the work on this paper was done during my stay at the Fields Institute
during the Thematic Program on o-minimal Structures and Real Analytic Geometry in 2009.
I would like to thank the Fields Institute (primarily the Organizers of the semester) for warm hospitality.

{\sc Politechnika Krakowska

Instytut Matematyki

Warszawska 24

PL-31-155 Krak\'ow

Poland}

E-mail: \textit{pupiekos@cyf-kr.edu.pl}

\begin{thebibliography}{ww}

\bibitem{Ba}
E. Baro, \textit{Normal triangulations in o-minimal structures},
Journal of Symbolic Logic Volume 75, Issue 1 (2010), 275--288.
 
\bibitem{BaOt}
E. Baro, M. Otero, \textit{On o-minimal homotopy groups},
Quart. J. Math. 61 no. 3 (2010), 275--289.

\bibitem{ldh}
E. Baro, M. Otero, \textit{Locally definable homotopy}, Annals of Pure and Applied Logic 161 (2010), 488--503.

\bibitem{BO}
A. Berarducci, M. Otero, \textit{O-minimal fundamental group,
homology, and manifolds}, J. London Math. Soc. 65 (2002), 257--270.

\bibitem{BO2}
A. Berarducci, M. Otero, \textit{Transfer methods for o-minimal topology},
Journal of Symbolic Logic 68 no. 3 (2003), 785--794.

\bibitem{DK2}
H. Delfs, M. Knebusch, \textit{Semialgebraic topology over a real
 closed field II: Basic theory of semialgebraic spaces}, Math. Z. 178 (1981), 175--213.

\bibitem{DK5}
H. Delfs, M. Knebusch, \textit{Separation, retractions and homotopy
 extension in semialgebraic spaces}, Pacific J. Math. 114 (1984), 47--71.

\bibitem{DK6}
H. Delfs, M. Knebusch, \textit{An introduction to locally
 definable spaces}, Rocky Mountain J. Math. 14 (1984), 945--963.

\bibitem{LSS}
H. Delfs, M. Knebusch, \textit{Locally Semialgebraic Spaces},
Lecture Notes in Mathematics 1173, Springer-Verlag 1985.

\bibitem{DM}
L. van den Dries, C. Miller, \textit{Geometric categories and o-minimal structures},
Duke Mathematical Journal 84 no. 2 (1996), 497--540.
  
\bibitem{Dries}
L. van den Dries, \textit{Tame topology and o-minimal structures},
London Math. Soc. Lecture Notes Series 248, CUP 1998.

\bibitem{E}
M. Edmundo, \textit{O-minimal (co)homology and applications}, in: O-minimal Structures, Proceedings of the RAAG Summer School Lisbon 2003, Lecture Notes in Real Algebraic and Analytic Geometry (M. Edmundo, D. Richardson and A. Wilkie eds.), Cuvillier Verlag 2005.

\bibitem{EJP}
M. Edmundo, G. Jones, N. Peatfield \textit{Sheaf cohomology in o-minimal structures},  J. Math. Logic 6 (2) (2006), 163--179. 

\bibitem{EP}
M. Edmundo,  N. Peatfield, \textit{O-minimal \v{C}ech cohomology},  Quart. J. Math. 59 (2) (2008), 213--220. 

\bibitem{Fischer}
A. Fischer, \textit{On smooth locally definable functions}, preprint 268 of the RAAG network (http://www.maths.manchester.ac.uk/raag/preprints/0268.pdf), 2008.


\bibitem{h}
Ph. Hirschhorn, \textit{Model Categories and Their Localizations}, Mathematical Surveys and Monographs Vol. 99, American Mathematical Society 2003.

\bibitem{Hu}
S. T. Hu, \textit{Homotopy theory}, Academic Press 1959.

\bibitem{KS}
M. Kashiwara, P. Shapira, \textit{Ind-sheaves}, Ast\'{e}risque 271, Soc. Math. France 2001.

\bibitem{WSS}
 M. Knebusch, \textit{Weakly Semialgebraic Spaces}, Lecture Notes
 in Mathematics 1367, Springer-Verlag 1989.

\bibitem{K91}
M. Knebusch, \textit{Semialgebraic topology in the recent ten years}, "Real Algebraic
Geometry Proceedings, Rennes 1991", M. Coste, L. Mah\'e, M.-F. Roy, eds., LNM 1524,
Springer 1992, 1--36. 

\bibitem{sheaves}
S. Mac Lane, I. Moerdijk, \textit{Sheaves in Geometry and Logic}, Universitext,
Springer-Verlag 1992.
 
\bibitem{maunder}
C. R. F. Maunder, \textit{Algebraic Topology}, Van Nostrand Reinhold Company 1970.


\bibitem{ap} A. Pi\k{e}kosz, {\it A topological version of Bertini's theorem}, Annales Polonici Mathematici {\bf LXI.1}
(1995), 89--93.

\bibitem{ap2} A. Pi\k{e}kosz, {\it On generalized topological spaces}, arXiv: 0904.4896 [math.LO].


\bibitem{Q} D. Quillen, \textit{Homotopical Algebra}, Lecture Notes in Mathematics 43, Springer-Verlag 1967.


\bibitem{R}
R. Robson, \textit{Embedding semialgebraic spaces}, Math. Z.
 183 (1983), 365--370.

\bibitem{Shiota}
M. Shiota, \textit{PL and differential topology in o-minimal structure}, arXiv: 1002.1508v3 [math.LO].

\bibitem{spa} E. Spanier, \textit{Algebraic topology}, McGraw-Hill Book Company 1966.

\bibitem{W}
 A. Woerheide, \textit{O-minimal homology}, Ph. D. Thesis, University of Illinois at Urbana-Champaign 1996.

\end{thebibliography}
\end{document}